\documentclass[11pt]{amsart}
\usepackage{geometry}                
\geometry{letterpaper}                   
\usepackage{graphicx}
\usepackage{amssymb}
\usepackage{epstopdf}
\usepackage{color}
\DeclareGraphicsRule{.tif}{png}{.png}{`convert #1 `dirname #1`/`basename #1 .tif`.png}

\newtheorem{theorem}{Theorem}[section]
\newtheorem{lemma}[theorem]{Lemma}
\newtheorem{corollary}[theorem]{Corollary}

\theoremstyle{definition}
\newtheorem{definition}[theorem]{Definition}

\newtheorem{proposition}[theorem]{Proposition}
\newtheorem{remark}[theorem]{Remark}

\numberwithin{equation}{section}

\def\R{\mathbb{R}}

\newcommand{\Z}{\mathbb{Z}}

\newcommand{\p}{\partial}

\newcommand{\CF}{\mathcal{F}}

\newcommand{\BFR}{\mathbf{R}}
\newcommand{\BFN}{\mathbf{N}}

\newcommand{\BBN}{\mathbb{N}}

\newcommand{\ov}{\overline}
\def\mapright#1{\smash{\mathop{\longrightarrow}\limits^{#1}}}

\newcommand{\be}{\begin{equation} }
\newcommand{\ee}{\end{equation}}
\newcommand{\bse}{\begin{subequations}}
\newcommand{\ese}{\end{subequations}}
\def\bea{\begin{eqnarray}}
\def\eea{\end{eqnarray}}

\title[Non-contractible closed geodesics]{Multiplicity of non-contractible closed geodesics on Finsler compact space forms}

\author{Hui Liu${}^\#$}
\address{School of Mathematics and Statistics, Wuhan University, \\ Wuhan 430072, Hubei, China}
\email{huiliu00031514@whu.edu.cn}
\thanks{${}^\#$ Partially supported by NSFC (Nos. 12022111, 11771341) and
the Fundamental Research Funds for the Central Universities (No. 2042021kf1059)}
	
\author{Yuchen Wang${}^\dagger$}
\address{School of Mathematics and Statistics, Central China Normal University,\\ Wuhan 430079, Hubei, China}
\email{wangyuchen@mail.nankai.edu.cn}
\thanks{${}^\dagger$ Partially supported by the funding of innovating activities in Science and Technology of Hubei Province. }

\date{}

\begin{document}

\begin{abstract}
{\it
Let $M=S^n/ \Gamma$ and $h$ be a nontrivial element of finite order $p$ in $\pi_1(M)$, where the integer $n, p\geq2$,
$\Gamma$ is a finite abelian group which acts freely and isometrically on the $n$-sphere and therefore $M$ is diffeomorphic
to a compact space form. In this paper, we prove that for every irreversible Finsler compact space form $(M,F)$ with
reversibility $\lambda$ and flag curvature $K$ satisfying
\[
\frac{4p^2}{(p+1)^2} \big(\frac{\lambda}{\lambda+1} \big)^2 < K \leq 1,\;\;\lambda< \frac{p+1}{p-1},
\]
there exist at least $n-1$ non-contractible closed geodesics of class $[h]$. In addition, if the metric $F$ is bumpy  and
\[
(\frac{4p}{2p+1})^2 (\frac{\lambda}{\lambda+1})^2 < K \leq 1,\;\;\lambda<\frac{2p+1}{2p-1},
\]
then there exist at least $2[\frac{n+1}{2}]$ non-contractible closed geodesics of class $[h]$, which is the optimal lower
bound due to Katok's example. For $C^4$-generic Finsler metrics, there are infinitely many non-contractible closed geodesics of
class $[h]$ on $(M, F)$ if $\frac{\lambda^2}{(\lambda+1)^2} < K \leq 1$ with $n$ being odd, or $\frac{\lambda^2}{(\lambda+1)^2}\frac{4}{(n-1)^2} < K \leq 1$ with $n$ being even. }

{\bf Key words}: Compact space form; Non-contractible closed geodesics; Fadell-Rabinowitz index; Poincar$\acute{e}$ series; Multiplicity.

{\bf AMS Subject Classification}: 53C22, 58E05, 58E10.
\renewcommand{\theequation}{\thesection.\arabic{equation}}
\renewcommand{\thefigure}{\thesection.\arabic{figure}}

\end{abstract}

\maketitle

\section{Introduction} \label{S:Intro}

Denote by $\Gamma$ a finite abelian group acting on the spheres $S^n$ freely and isometrically and $h \in \Gamma$ a non-trivial element of the order $p \geq 2$. The problem we address in this paper is the the existence and multiplicity of non-contractible closed geodesics which are homotopic to $h$ on compact space form $S^{n}/\Gamma$ with irreversible Finsler metrics. It is a typically non-simply connected manifold with the fundamental group $\pi_1(S^{n}/\Gamma)=\Gamma$.

Recall that a Finsler metric on a finite dimensional manifold $M$ is defined by a function $F:TM \to [0, +\infty)$ holding with the following properties:
\begin{enumerate}
\item $F$ is smooth on $TM \setminus \{0\}$,
\item $F(x,\lambda v) = \lambda F(x,v)$ for $\lambda >0, (x,v) \in TM$,
\item the Hessian $\p_{vv} F^2$ is positively definite for any $(x,v) \in TM$ outside the zero section.
\end{enumerate}
In this case, $(M,F)$ is called a {\it Finsler manifold}. $F$ is
{\it reversible} if $F(x,-v)=F(x,v)$ holds for all $v\in T_xM$ and
$x\in M$. $F$ is {\it Riemannian} if $F(x,v)^2=\frac{1}{2}G(x)v\cdot
v$ for some symmetric positive definite matrix function $G(x)\in
GL(T_xM)$ depending on $x\in M$ smoothly.

The length of a curve $\gamma:[a,b] \to M$ on Finsler manifold $(M,F)$ is given by
\be \label{E:length}
L(\gamma) = \int_a^b F(\gamma,\dot{\gamma}) dt.
\ee
In contrast to Riemannian manifolds, the length of a curve may not coincide with its inverse on Finsler manifolds.
Rademacher \cite{Rad04} defined the reversibility of Finsler metrics by
\be
\lambda:=\max\{ F(-X) \mid X \in TM, \; F(X)=1\} \geq 1,\nonumber
\ee
and a Finsler metric is reversible if and only if the equality holds.

Let $\Lambda M$ be the free loop space on $M$ defined by
\begin{equation*}
\label{LambdaM}
  \Lambda M=\left\{\gamma: S^{1}\to M\mid \gamma\ {\rm is\ absolutely\ continuous\ and}\
                        \int_{0}^{1}F(\gamma,\dot{\gamma})^{2}dt<+\infty\right\},
\end{equation*}
endowed with a natural structure of Riemannian Hilbert manifold on which the group $S^1=\R/\Z$ acts continuously by
isometries (cf. Shen \cite{Shen01}).
A closed curve on a Finsler manifold is a closed geodesic if it is locally the shortest path connecting any two nearby points on this
curve. It is well known (cf. Chapter 1 of Klingenberg \cite{Kli78}) that $c$ is a closed geodesic or a constant curve
on $(M,F)$ if and only if $c$ is a critical point of the energy functional \begin{equation*}
\label{energy}
E(\gamma)=\frac{1}{2}\int_{0}^{1}F(\gamma,\dot{\gamma})^{2}dt.
\end{equation*}
As usual, on any Finsler manifold $(M, F)$, a closed geodesic $c:S^1=\mathbb{R}/\mathbb{Z}\to M$ is {\it prime}
if it is not a multiple covering (i.e., iteration) of any other
closed geodesics. Here the $m$-th iteration $c^m$ of $c$ is defined
by $c^m(t)=c(mt)$. The inverse curve $c^{-1}$ of $c$ is defined by
$c^{-1}(t)=c(1-t)$ for $t \in \mathbb{R}$.  Note that the inverse curve $c^{-1}$ of a closed geodesic $c$
need not be a geodesic except $\lambda =1$.
We call two prime closed geodesics
$c$ and $d$ {\it distinct} if there is no $\theta \in (0,1)$ such that
$c(t)=d(t+\theta )$ for all $t\in \mathbb{R}$.
We shall omit the word {\it distinct} when we talk about more than one prime closed geodesic.
For a closed geodesic $c$ on $(M,\,F)$, denote by $P_c$
the linearized Poincar\'{e} map of $c$. Recall that a Finsler metric $F$ is {\it bumpy} if all the closed geodesics
on $(M, \,F)$ are non-degenerate, i.e., $1\notin \sigma(P_c)$ for any closed
geodesic $c$.

The existence and multiplicity of closed geodesics on manifolds is a long-standing issue in the global differential geometry and calculation of
variations. The first mathematical result is due to Birkhoff \cite{Bir27}, in which he proved that there exists at least one closed geodesic on
closed surfaces with arbitrary Riemannian metric. The existence of closed geodesics on arbitrary Riemannian manifold
is due to Lyusternik and Fet \cite{LF51}, where they considered the energy functional on the free loop space and proved that the energy functional
must have critical points with positive energy. Their arguments also carries on the Finsler metrics.

The multiplicity of closed geodesics is much more involved in the topological structure of the free loop spaces.
In a seminal work \cite{GM69}, Gromoll and Meyer obtained infinitely many geometrically distinct closed geodesics
on the Riemannian manifold $M$, provided the Betti number sequence $\{b_p(\Lambda M; \mathbb{Q})\}_{p\in\mathbb{N}}$ of
the free loop space $\Lambda M$ of $M$ is unbounded. For simply-connected compact manifold, Vigu$\acute{e}$-Poirrier and
Sullivan \cite{ViS} pointed out that the Betti number sequence is bounded if and only if $M$ satisfies
\be
H^*(M; \mathbb{Q})\cong T_{d, n+1}(x)=\mathbb{Q}[x]/(x^{n+1}=0)\nonumber
\ee
with a generator $x$ of degree $d\geq 2$ and height $n+1\geq 2$, where $\dim M = dn$.
Thereafter, the main interest focuses on the compact globally symmetric spaces of rank 1,
where the Gromoll-Meyer assumption does not hold. They consist in
\begin{gather}
 S^n,\quad \mathbb{R}P^n,\quad \mathbb{C}P^n,\quad \mathbb{H}P^n \quad and \quad CaP^2.\nonumber
\end{gather}
A very famous conjecture states that there are infinitely many distinct closed geodesics on every compact simply-connected
Riemannian manifold. The answer is affirmative on $S^2$. See  \cite{Ban93}, \cite{Fra92} and \cite{Hin93} for details.
On the $n$-dimensional Finsler spheres,
however, one gets a negative answer due to the irreversible metrics given by Katok \cite{Kat73},
in which one could only obtain $2[\frac{n+1}{2}]$ distinct
closed geodesics. These explicit metrics reveal the importance of the symmetry.

The multiplicity of closed geodesics on Finsler spheres has been extensively studied by many authors in the last two decades.
In 2004, Bangert and Long \cite{BL10} (published in 2010) proved there are at least two geometrically distinct closed geodesics
on every Finsler two-sphere. Their argument is based on the Maslov-type index theory for symplectic paths.
Since then a great number of results on the multiplicity
of closed geodesics on simply connected Finsler manifolds have appeared, for which we refer readers to
\cite{DuL1}-\cite{DLW2}, \cite{HiR}, \cite{LoD},  \cite{Rad04}-\cite{Rad10},\cite{Tai1}, \cite{Wan1}-\cite{Wan2} and the references therein.

Besides many works on closed geodesics in the literature which study closed geodesics on simply connected
manifolds, we are aware of not many papers on the multiplicity of closed geodesics on non-simply connected manifolds
whose free loop space possesses bounded Betti number sequence, at least when they are endowed with Finsler metrics. For example,
Ballman et al. \cite{BTZ81} proved in 1981 that every Riemannian manifold with the fundamental group being a nontrivial finitely cyclic group
and possessing a generic metric has infinitely many distinct closed geodesics. In 1984, Bangert and Hingston \cite{BH84} proved that any
Riemannian manifold with the fundamental group being an infinite cyclic group has infinitely many distinct closed geodesics.

In order to apply Morse theory to the multiplicity problem of closed geodesics, motivated by the studies on the simply
connected manifolds, in particular, the resonance identity proved by Rademacher \cite{Rad89},
Xiao and Long \cite{XL15} in 2015 investigated the topological structure of the non-contractible
loop space and established the resonance identity for the non-contractible
closed geodesics on $\mathbb{R}P^{2n+1}$ by use of
$\Z_2$ coefficient homology. As an application, Duan, Long and Xiao \cite{DLX15}
proved the existence of at least two distinct non-contractible closed geodesics on $\R P^{3}$ endowed with a bumpy
and irreversible Finsler metric. Subsequently in \cite{Tai16}, Taimanov
used a quite different method from \cite{XL15} to compute the rational equivariant cohomology of
 the non-contractible loop spaces in compact space forms $S^n/ \Gamma$ and
proved the existence of at least two distinct non-contractible closed geodesics on $\mathbb{R}P^2$
endowed with a bumpy irreversible Finsler metric, and there are at least two non-contractible closed geodesics of class $[h]$
on compact space form $M=S^n/ \Gamma$ if $\Gamma$ is an abelian group, $h$ has an even order and is nontrivial in $\pi_1(M)$
and $\pi_1(\Lambda_{h} (M))_{SO(2)}\neq 1$. Then in \cite{Liu17}, Liu combined Fadell-Rabinowitz index theory with Taimanov's topological results
to get many multiplicity results of non-contractible closed geodesics on positively curved Finsler $\mathbb{R}P^n$.
In \cite{LX},  Liu and Xiao established
the resonance identity for the non-contractible closed geodesics on $\mathbb{R}P^n$, and
together with \cite{DLX15} and \cite{Tai16} proved the existence of at least two distinct
non-contractible closed geodesics on every bumpy $\mathbb{R}P^n$ with $n\geq2$.
Furthermore, Liu, Long and Xiao \cite{LLX18} established the resonance identity for non-contractible closed geodesics of
class $[h]$ on compact space form $M=S^n/ \Gamma$ and obtained  at least two non-contractible closed geodesics of
class $[h]$ provided $\Gamma$ is abelian and $h$ is nontrivial in $\pi_1(M)$. Recently, Liu \cite{Liu19} obtained an optimal
lower bound estimation of the number of contractible closed geodesics on bumpy Finsler compact space form $S^{2n+1}/ \Gamma$ with
reversibility $\lambda$ and flag curvature $K$ satisfying $\left(\frac{\lambda}{1+\lambda}\right)^2<K\le 1$.

In this paper, we shall use the Fadell-Rabinowitz index theory and Taimanov's topological results to study the multiplicity
of non-contractible closed geodesics of nontrivial class $[h]$ on positively curved Finsler compact space form $M=S^n/ \Gamma$.
The main results are read as follows:
\begin{theorem} \label{T:main-1}
{\it Let $M=(S^n/ \Gamma, F)$ be an irreversible Finsler compact space form with  $\Gamma$ being an abelian group and $h$ be a nontrivial
element of finite order $p$ in $\pi_1(M)$. If the reversibility $\lambda$ and the flag curvature $K$ satisfy
$\frac{4p^2}{(p+1)^2} \big(\frac{\lambda}{\lambda+1} \big)^2 < K \leq 1$
with $\lambda< \frac{p+1}{p-1}$, then there exist at least $n-1$ non-contractible closed geodesics of class $[h]$.}
\end{theorem}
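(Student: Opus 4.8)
The plan is to run the standard variational machinery for closed geodesics on the non-contractible component $\Lambda_{h}M$ of the free loop space, using the Fadell--Rabinowitz index as the counting device, exactly in the spirit of Liu \cite{Liu17} for $\mathbb{R}P^n$ but with Taimanov's computation of the rational $S^1$-equivariant cohomology of $\Lambda_{h}(S^n/\Gamma)$ in place of the $\mathbb{R}P^n$ calculation. First I would set up the $S^1$-invariant energy functional $E$ on $\Lambda_{h}M$, recall that under the pinching hypothesis $\frac{4p^2}{(p+1)^2}\big(\frac{\lambda}{\lambda+1}\big)^2<K\le 1$ every closed geodesic $c$ of class $[h]$ has length bounded below (so that the shortest one is prime of class exactly $[h]$) and, crucially, that iterates $c^{m}$ for small $m$ stay below the relevant energy level or have controlled index; the curvature pinching together with $\lambda<\frac{p+1}{p-1}$ is precisely what forces the Morse indices of the first few iterates of a closed geodesic of class $[h]$ to lie in a prescribed range via the Rademacher-type index estimates (comparison with the round sphere, where a geodesic of class $[h]$ closes up after going $p$ times around a great circle segment). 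This is the step where the two numerical constraints on $K$ and $\lambda$ are consumed.

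Next I would introduce the Fadell--Rabinowitz index $i_{FR}$ on the $S^1$-space $\Lambda_{h}M$ and the associated sequence of critical values
\[
0<c_{1}\le c_{2}\le\cdots\le c_{n-1}
\]
defined by $c_{k}=\inf\{\,\kappa>0 : i_{FR}(\{E\le\kappa\})\ge k\,\}$, where the number $n-1$ comes from the fact that, by Taimanov's result, the rational $S^1$-equivariant cohomology of $\Lambda_{h}(S^n/\Gamma)$ contains a truncated polynomial algebra on a degree-$2$ generator of height at least $n-1$ (this is the non-simply-connected analogue of the $\mathbb{C}P^{\,n-1}$-type cohomology appearing for $\mathbb{R}P^n$, and it is exactly what makes $i_{FR}$ go up to $n-1$). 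Standard Fadell--Rabinowitz theory then gives: each $c_{k}$ is a critical value, and if $c_{k}=c_{k+1}=\cdots=c_{k+\ell}$ then the critical set at that level carries Fadell--Rabinowitz index at least $\ell+1$, hence in particular contains infinitely many $S^1$-orbits (so more than $n-1$ closed geodesics, which is even stronger). So I may assume all the $c_{k}$ are distinct.

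The heart of the matter is then to show that these $n-1$ critical values cannot all be produced by iterates of a single (or too few) prime closed geodesics — i.e. to rule out the situation where the geodesics realizing $c_{1},\dots,c_{n-1}$ are iterates $c^{m_1},\dots,c^{m_{n-1}}$ of fewer than $n-1$ prime closed geodesics. Here I would combine: (i) the local homology / Gromoll--Meyer-type bound saying a single closed geodesic contributes Fadell--Rabinowitz index jumps of at most a bounded amount, with (ii) the index iteration inequalities for Finsler closed geodesics (Bott-type formulas, the common index jump theorem, and the mean index being positive under the curvature pinching) to control which iterates can sit at the levels $c_k$, and (iii) Taimanov's precise description of the equivariant cohomology ring of $\Lambda_{h}M$, which pins down the degrees where $i_{FR}$ increments. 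The crux — and the step I expect to be the main obstacle — is item (i)--(ii) together: carefully bounding, in terms of $p$, $\lambda$ and the pinching constant, the total contribution of one prime closed geodesic and all its iterates to the Fadell--Rabinowitz index filtration of $\Lambda_{h}M$, so that $n-1$ distinct critical values force at least $n-1$ distinct prime closed geodesics of class $[h]$. Once that counting is in place, the theorem follows; the degenerate (non-bumpy) case is handled by the usual observation that a degenerate critical orbit with large local index already yields infinitely many closed geodesics, so one may as well argue as if the metric were bumpy.
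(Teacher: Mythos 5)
Your overall framework (the Fadell--Rabinowitz index on $\Lambda_h M$, the critical values $\kappa_i$, ``equal values $\Rightarrow$ infinitely many geodesics'', and pinching via Rademacher-type estimates) is the same as the paper's, but there is a genuine gap at exactly the step you defer as ``the main obstacle'', and that step is where the theorem is actually proved. No common index jump theorem and no bound on ``the total contribution of one prime geodesic and all its iterates to the FR filtration'' is needed. The paper's argument is short: by Theorem~\ref{T:index} and Proposition~\ref{P:dim-1}, the geodesic $c_i$ realizing $\kappa_i$ satisfies $i(c_i)\le 2i-2\le i(c_i)+\nu(c_i)$; since $\Gamma$ is abelian, the only iterates of a minimal class-$[h]$ geodesic that lie in $\Lambda_h M$ are the $((m-1)p+1)$-st ones, so any non-minimal critical point of $E|_{\Lambda_h M}$ is at least a $(p+1)$-st iterate; and Lemma~\ref{L:index-1} (Rademacher's length bound $L(c^p)\ge \pi\frac{\lambda+1}{\lambda}$ on the universal cover combined with the Morse--Schoenberg estimate, using precisely the constant $\frac{4p^2}{(p+1)^2}\bigl(\frac{\lambda}{\lambda+1}\bigr)^2$) gives $i(c^m)\ge 2(n-1)$ for every $m\ge p+1$. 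Hence for $1\le i\le n-1$ the index window $i(c_i)\le 2i-2\le 2n-4<2(n-1)$ rules out iterates, so each $c_i$ is minimal, and since the $\kappa_i$ may be assumed pairwise distinct (Theorem~\ref{T:Strict-in}), the $c_1,\dots,c_{n-1}$ are pairwise distinct. You gesture at ``indices of the first few iterates lying in a prescribed range'', but you never state the quantitative claim $i(c^m)\ge 2(n-1)$ for $m\ge p+1$, nor the $(m\equiv 1 \bmod p)$ restriction on iterates in $\Lambda_h M$, nor the comparison with the FR degree $2i-2$; without these three ingredients the count $n-1$ is not obtained, and the route you sketch instead (Bott formulas, common index jump) is both harder and unnecessary here.

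Second, your explanation of where $n-1$ comes from is incorrect. You claim Taimanov's ring contains a truncated polynomial algebra on a degree-$2$ generator of height at least $n-1$ and that ``this is exactly what makes $i_{FR}$ go up to $n-1$''. By Lemma~\ref{L:Betti-Tai} the truncation is $w^{k+1}=0$ for $n=2k+1$ (height $\frac{n+1}{2}$, which is smaller than $n-1$ once $n\ge 5$) and $w^{2k}=0$ for $n=2k$ (height $n$), so the height is not $n-1$; more importantly, the method needs $d_h(\kappa)\to\infty$, which the paper proves by comparison with a Katok metric of constant curvature $1$ (Proposition~\ref{P:Ind-2}), not from the ring, and the number $n-1$ enters only through the index window described above. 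Finally, your closing remark that a degenerate critical orbit ``already yields infinitely many closed geodesics'' is neither needed nor correct as stated: possibly degenerate geodesics are handled directly through the Gromoll--Meyer critical modules via Proposition~\ref{P:dim-1}, with no reduction to the bumpy case.
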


\begin{theorem} \label{T:main-2}
{\it Let $M=(S^n/ \Gamma, F)$ be an irreversible bumpy Finsler compact space form with  $\Gamma$ being an abelian group and $h$ be a nontrivial
element of finite order $p$ in $\pi_1(M)$. If the reversibility $\lambda$ and the flag curvature $K$ satisfy
$(\frac{4p}{2p+1})^2 (\frac{\lambda}{\lambda+1})^2 < K \leq 1$ with $\lambda<\frac{2p+1}{2p-1}$,	
then there exist at least $2[\frac{n+1}{2}]$ non-contractible closed geodesics of class $[h]$.}
\end{theorem}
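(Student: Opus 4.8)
\emph{Plan of proof.} I would argue by contradiction, assuming $(M,F)$ carries only finitely many distinct prime closed geodesics of class $[h]$, say $c_1,\dots,c_k$, and deriving $k\ge 2[\frac{n+1}{2}]$. Since $F$ is bumpy, every $c_i$ and every iterate is non-degenerate, so on the non-contractible component $\Lambda_h M:=\Lambda_h(S^n/\Gamma)$ of the free loop space the energy functional $E$ is an $S^1$-invariant Morse--Bott function whose critical orbits are isolated circles, each contributing at most a one-dimensional local equivariant homology group in a degree controlled by $i(c_i^m)$ and by the orientability of the negative bundle (Rademacher's $\varepsilon$-invariant). The first ingredient is Taimanov's computation of $H^{*}_{S^1}(\Lambda_h M;\mathbb{Q})$ for compact space forms: feeding it into the Gysin sequence of the Borel fibration shows that the Fadell--Rabinowitz index of $\Lambda_h M$ over $\mathbb{Q}$ is infinite, and --- this is where the number $2[\frac{n+1}{2}]$ originates --- that the $S^1$-equivariant Betti numbers are eventually periodic with exactly $2[\frac{n+1}{2}]$ classes per period. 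Consequently, normalising the canonical class $\omega\in H^{2}_{S^1}$, one obtains critical values $0<\kappa_1\le\kappa_2\le\cdots\to\infty$ of $E$ with $\omega^{j-1}\ne0$ in $H^{*}_{S^1}(\{E\le\kappa_j\})$, and by bumpiness each genuine jump $\kappa_j<\kappa_{j+1}$ is realised by a closed geodesic of class $[h]$ whose index equals $2(j-1)$, up to the usual $\pm1$ correction from the $\varepsilon$-invariant.

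Next I would extract the index growth forced by the curvature pinching. The two hypotheses $(\frac{4p}{2p+1})^{2}(\frac{\lambda}{\lambda+1})^{2}<K\le1$ and $\lambda<\frac{2p+1}{2p-1}$ are equivalent, and applying the Finsler Morse--Schoenberg/Rademacher comparison estimates to the $p$-fold lift $\widetilde c\colon[0,1]\to S^{n}$, $\widetilde c(1)=h\,\widetilde c(0)$, of a closed geodesic $c$ of class $[h]$ gives, on the one hand, upper bounds on $i(c)$ and on the low iterates (forcing the shortest such geodesic to have index $0$), and on the other hand a lower bound on the mean index $\hat i(c)$ strong enough that, combined with Long's iteration inequalities $|i(c^m)-m\,\hat i(c)|\le n-1$, the index gap $i(c^{(m+1)p+1})-i(c^{mp+1})$ between consecutive iterates of $c$ lying in $\Lambda_h M$ strictly exceeds the topological period above. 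The same bound, with room to spare, handles the iterates $d^{j}\in\Lambda_h M$ of prime closed geodesics $d$ whose class $[d]\neq[h]$ but $[d]^{j}=[h]$: such a $d$ contributes to $\Lambda_h M$ only along an arithmetic subprogression of its iteration tower, hence even more sparsely.

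The counting then finishes the argument. Over one period there are exactly $2[\frac{n+1}{2}]$ jumps of the Fadell--Rabinowitz index, each realised by a closed geodesic of class $[h]$; by the previous paragraph the iteration tower of a single prime $c_i$ (or of a cross-class prime $d$) meets any such period in at most one homologically visible iterate, hence can be responsible for at most one of those jumps. Therefore $k\ge 2[\frac{n+1}{2}]$, which is the assertion; Katok's metric on $S^{n}$, pushed down to $M$, shows this bound is optimal. (Theorem~\ref{T:main-1} is the non-bumpy analogue: there the local equivariant homology of a degenerate iterate can spread over several degrees, the one-jump-per-period bound fails, and one only recovers $n-1$.)

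The main obstacle is the calibration in the middle step: one must show that the pinching controls not merely $\hat i(c)$ but the full pattern $\{i(c^m),\nu(c^m),\varepsilon(c^m)\}$ tightly enough that the inter-iterate index gap \emph{strictly} exceeds the period --- not just up to an additive constant --- for every prime of class $[h]$ and every cross-class prime simultaneously; this is exactly where the sharp constants $\frac{4p}{2p+1}$ and $\frac{2p+1}{2p-1}$ are needed and where bumpiness is indispensable. A secondary difficulty is organising Taimanov's equivariant cohomology of $\Lambda_h M$ precisely enough to read off both the infinitude of the Fadell--Rabinowitz index and the exact period with its $2[\frac{n+1}{2}]$ jumps, including the dependence on the parities of $n$ and $p$.
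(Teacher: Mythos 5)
Your overall architecture (Taimanov's equivariant cohomology, curvature-controlled index growth, then a counting step) points in the right direction, but the step that actually produces the number $2[\frac{n+1}{2}]$ is not available under the stated hypotheses. You ask the pinching to force the index gap $i(c^{(m+1)p+1})-i(c^{mp+1})$ between consecutive class-$[h]$ iterates to \emph{strictly exceed} the topological period $2(n-1)$, so that each iteration tower is homologically visible at most once per period. But the hypothesis $K>(\tfrac{4p}{2p+1})^2(\tfrac{\lambda}{\lambda+1})^2$, fed through Rademacher's mean-index estimate applied to the lift $c^p$ to $S^n$, only yields $\hat i(c^p)>\tfrac{4p}{2p+1}(n-1)$, which is \emph{strictly less} than $2(n-1)$ for every finite $p$; after adding the $\pm(n-1)$ errors from the average-index inequality, individual gaps can be smaller still. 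So a single tower will in general meet a given period window more than once, the ``one jump per period'' bound fails, and the per-period count collapses (with the true period $n-1$ in the odd case it would at best give about $\frac{n+1}{2}$ geodesics, not $n+1$). Incidentally, the two hypotheses are not equivalent: $\lambda<\frac{2p+1}{2p-1}$ is precisely the condition making the pinching interval compatible with $K\le 1$.

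What the constants $\frac{4p}{2p+1}$ and $\lambda<\frac{2p+1}{2p-1}$ actually buy --- and what the paper uses --- is a statement about the \emph{third and later} admissible iterates only: by Rademacher's length bound on the universal cover, $L(c^m)=\frac{m}{p}L(c^p)\ge\frac{m\pi}{p}\frac{\lambda+1}{\lambda}>\frac{4\pi}{\sqrt{\delta}}$ for all $m\ge 2p+1$, hence $i(c^m)\ge 4(n-1)$ by Morse--Schoenberg (Lemma \ref{L:index-2}). Thus in degrees $q\le 4n-6$ only the iterates $c$ and $c^{p+1}$ of each minimal class-$[h]$ geodesic can carry local critical homology, and by bumpiness each carries at most one unit in a single degree, so $M_0+M_2+\cdots+M_{4n-6}\le 2k$. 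The Morse inequalities against Taimanov's equivariant Betti numbers give this sum $\ge 2n+1$ for $n$ odd, resp.\ $2n-1$ for $n$ even, whence $k\ge n+1$, resp.\ $k\ge n$, i.e.\ $k\ge 2[\frac{n+1}{2}]$. In particular no Fadell--Rabinowitz machinery and no control of the full index pattern along the whole tower is needed for this theorem (that machinery is what the paper uses for Theorems \ref{T:main-1} and \ref{T:main-3}). To repair your argument you would have to replace the ``gap exceeds the period'' claim by a finite-window count of this kind, or else strengthen the pinching beyond what the theorem assumes.
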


\begin{theorem} \label{T:main-3}
{\it Let $M=(S^n/ \Gamma, F)$ be an irreversible Finsler compact space form with  $\Gamma$ being an abelian group and $h$ be a nontrivial
element of finite order $p$ in $\pi_1(M)$. If the reversibility $\lambda$ and the flag curvature $K$ satisfy
$\delta \leq  K \leq 1$ where $(\frac{\lambda}{\lambda+1})^2  <\delta$ if $n$ is odd, then there exists at least
$E(\frac{\sqrt{\delta} (\lambda+1)}{2\lambda } (n-1) )$ non-contractible closed geodesics of class $[h]$,
where $E(a)=\min\{m\in \mathbb{Z}\mid m\geq a\}$. }
\end{theorem}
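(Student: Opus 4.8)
\emph{Sketch of the intended approach.}
The plan is to run the Fadell--Rabinowitz index machinery on the non-contractible component $\Lambda_{h}(M)$ of the free loop space, exactly in the spirit of the proofs of Theorems~\ref{T:main-1} and~\ref{T:main-2}, but to extract the lower bound from the pinching constant $\delta$ rather than from a cruder threshold depending only on the order $p$; throughout one works with $n$ odd, as the hypothesis dictates. First one records the topological input: for $n$ odd Taimanov's computation of the rational $S^{1}$-equivariant cohomology of $\Lambda_{h}(M)$ exhibits a free $\Q[w]$-action ($\deg w=2$) with a generator in each even degree, so the Fadell--Rabinowitz index $i^{FR}(\cdot)$ is defined on all $S^{1}$-invariant subsets of $\Lambda_{h}(M)$, is monotone and subadditive, and satisfies $i^{FR}(\Lambda_{h}(M))=+\infty$. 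Then for $k\ge 1$ one sets
\[
c_{k}\;=\;\inf\Big\{\kappa>0\;:\;i^{FR}\big(\{E<\kappa\}\cap\Lambda_{h}(M)\big)\ \ge\ k\Big\},
\]
and, using the Palais--Smale condition for $E$ and the absence of critical points of zero energy in $\Lambda_{h}(M)$ (it contains no constant loops), one gets that each $c_{k}$ is a positive critical value of $E|_{\Lambda_{h}(M)}$, that $0<c_{1}\le c_{2}\le\cdots\to+\infty$, and that the critical set at level $c_{k}$ carries the $S^{1}$-orbit of a closed geodesic $c$ of class $[h]$ whose relevant iterate $c^{m}$ obeys $i(c^{m})\le 2(k-1)\le i(c^{m})+\nu(c^{m})$.

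Next one feeds in the curvature pinching through three quantitative facts. (i) A Klingenberg--Rauch type injectivity-radius estimate for irreversible Finsler metrics under $\delta\le K\le 1$ bounds the length of every closed geodesic of class $[h]$ from below, and bounds the shortest one from above by a multiple of $\tfrac{\pi\lambda}{\sqrt\delta(\lambda+1)}$, so that the first $c_{k}$ lie inside a short-energy window $[0,\kappa_{*}]$ with $\kappa_{*}$ of that order. (ii) Comparing the index form of an iterate $c^{m}$ with those of great circles on the round comparison spheres of curvature $\delta$ and $1$ gives index-iteration inequalities, schematically of the form $i(c^{m})\ge (n-1)\big(m\,\tfrac{\sqrt\delta\,L(c)}{\pi}\cdot\tfrac{\lambda}{\lambda+1}-1\big)$ and $i(c^{m})+\nu(c^{m})\le (n-1)\big(m\,\tfrac{L(c)}{\pi}+1\big)$, in particular the shortest class-$[h]$ geodesic has Morse index at least $n-1$ and, inside $[0,\kappa_{*}]$, the admissible iterates of a single prime geodesic span only a controlled band of Morse indices. (iii) An iterate $c^{m}$ lies in class $[h]$ only if $m\equiv 1\pmod p$, so the shortest class-$[h]$ iterate of a prime $c$ other than $c$ itself is $c^{p+1}$, of length $(p+1)L(c)$; since $1/\sqrt\delta<2<p+1$ this iterate already falls outside the window $[0,\kappa_{*}]$, so every critical orbit met inside the window is that of a \emph{prime} closed geodesic of class $[h]$.

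Finally one argues by contradiction and counting. Assuming there are only $N$ prime closed geodesics of class $[h]$, all critical values $\{c_{k}\}$ are of the form $\tfrac12 m^{2}L(\gamma_{j})^{2}$ with $1\le j\le N$ and $m\equiv 1\pmod p$; combining (ii) and (iii) one bounds the number of $c_{k}$'s below $\kappa_{*}$ that a single prime $\gamma_{j}$ (together with its admissible iterates) can be responsible for, while Taimanov's computation guarantees that $i^{FR}\big(\{E<\kappa_{*}\}\cap\Lambda_{h}(M)\big)$ already reaches $E\big(\tfrac{\sqrt\delta(\lambda+1)}{2\lambda}(n-1)\big)$, so the pigeonhole principle forces $N\ge E\big(\tfrac{\sqrt\delta(\lambda+1)}{2\lambda}(n-1)\big)$, which is the assertion. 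I expect the main obstacle to be precisely this last step: one must balance the lower index estimate (governed by $\sqrt\delta\,\tfrac{\lambda}{\lambda+1}$) against the energy barrier $\kappa_{*}$ (governed by $\tfrac{\lambda}{\sqrt\delta(\lambda+1)}$) so that the numerology produces exactly the constant $\tfrac{\sqrt\delta(\lambda+1)}{2\lambda}$ and not a weaker one, and one must control the contribution of possibly degenerate critical orbits in the non-bumpy case, which is where the nullity bound $\nu(c^{m})\le 2(n-1)$ — available because $M$ is covered by $S^{n}$ — together with the index inequalities of step (ii), is needed to cap the jump of $i^{FR}$ at each critical level.
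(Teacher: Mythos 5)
Your setup (the Fadell--Rabinowitz minimax values on $\Lambda_h M$ and the index pinching $i(c^{m})\le 2(k-1)\le i(c^{m})+\nu(c^{m})$ at each critical value) matches the paper's Section 3, but the counting step you propose is not the paper's argument and, as stated, does not work. The paper never confines the relevant critical values to a finite energy window; it argues asymptotically: by Lemma 3.9 (the Ekeland--Rademacher average inequality transplanted to $\Lambda_h M$, where the factor comes from the fact that only the $((m-1)p+1)$-th iterates lie in the class $[h]$) one has $\sum_{c\in\mathcal{V}_\infty}1/\hat{i}(c)\ge p/2$, and by Rademacher's mean-index estimate (Lemma 4.5) applied to $c^{p}$, which is a closed geodesic on the universal cover $S^{n}$, one gets $\hat{i}(c)=\hat{i}(c^{p})/p\ge \frac{1}{p}\sqrt{\delta}\,\frac{\lambda+1}{\lambda}(n-1)$; dividing immediately gives $k\ge \frac{\sqrt{\delta}(\lambda+1)}{2\lambda}(n-1)$. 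Neither of these two ingredients appears in your sketch, and the step you flag yourself as the ``main obstacle'' is exactly the gap: you offer no reason why $d_h(\kappa_*)$ should already reach $E\big(\frac{\sqrt{\delta}(\lambda+1)}{2\lambda}(n-1)\big)$ for a $\kappa_*$ comparable to $\pi\lambda/(\sqrt{\delta}(\lambda+1))$. Taimanov's computation concerns the equivariant cohomology of the whole component $\Lambda_h M$, not of sublevel sets; the growth of $d_h$ is controlled only asymptotically (Lemma 3.2 gives bounds on $\underline{\sigma},\overline{\sigma}$), and since $d_h(\kappa)$ grows roughly like $\frac{\sqrt{\delta}}{2\pi}\kappa$, reaching the target value forces $\kappa_*$ of order $(n-1)\pi\frac{\lambda+1}{\lambda}$, which grows with $n$.

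This in turn destroys your step (iii): a window of that size easily contains the iterates $c^{p+1}$ (of length $\frac{p+1}{p}L(c^{p})$, which is bounded independently of $n$), so you cannot conclude that every critical orbit met below $\kappa_*$ is prime, nor that distinct minimax levels yield distinct geodesics by exclusion of iterates. Indeed, under the hypotheses of Theorem 1.3 there is no relation imposed between $\lambda$ and $p$ and $\delta$ may be arbitrarily close to $(\lambda/(\lambda+1))^{2}$, so iterates cannot be pushed out of any relevant energy range; the ``window plus index threshold'' strategy is precisely the mechanism of Theorems 1.1--1.2, which is why those theorems require the stronger pinching $\frac{4p^2}{(p+1)^2}(\frac{\lambda}{\lambda+1})^2<K$ resp. $(\frac{4p}{2p+1})^2(\frac{\lambda}{\lambda+1})^2<K$, whereas Theorem 1.3 is proved by the mean-index averaging argument. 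Two further slips: the hypothesis does not restrict to $n$ odd --- the condition $(\frac{\lambda}{\lambda+1})^{2}<\delta$ is only needed in the odd case, and the even case (where $M=\R P^{n}$) is covered as well, Lemma 4.5 handling both parities; and your schematic lower index bound $i(c^{m})\ge(n-1)\big(m\frac{\sqrt{\delta}L(c)}{\pi}\cdot\frac{\lambda}{\lambda+1}-1\big)$ carries a spurious factor $\frac{\lambda}{\lambda+1}$, which would weaken the numerology and could not reproduce the constant $\frac{\sqrt{\delta}(\lambda+1)}{2\lambda}$ even if the window argument were repaired.
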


\begin{remark}
All closed geodesics we obtained are homotopic to $h$. Note that closed geodesics with another homotopic types may be iterations of what we obtained.
Indeed, that is what we have known in the Katok's metrics of \cite{Kat73}. When $n$ is even, the only non-trivial group acting on $S^n$ freely and
isometrically is $\mathbb{Z}_2$ and in this case the compact space form is actually the real projective spaces $\mathbb{R} P^n$ and our results
coincides with \cite{Liu17}.

We just make some more remarks about our results. Theorem \ref{T:main-1} and Theorem \ref{T:main-3} are inspired by Rademacher \cite{Rad94}
and \cite{Rad07} in which he used Fadell-Rabinowitz index in a relative version to study the closed geodesic problems on compact simply connected
manifolds, and also by Liu \cite{Liu17} in which he used an absolute version of the Fadell-Rabinowitz index to study the multiplicity of
non-contractible closed geodesics on positively curved Finsler $\mathbb{R}P^n$, note that Ballman et al. \cite{BTZ82} applied
Lusternik-Schnirelmann theory to obtained the multiplicity of  closed geodesics on positively curved Riemannian $S^n$.
In contrast to \cite{BTZ82}, \cite{Rad94} and \cite{Rad07}, we are concerned with the number of non-contractible closed geodesics of
nontrivial class $[h]$ on non-simply connected manifolds $S^n/\Gamma$, our proof uses an absolute version of the
Fadell-Rabinowitz index which we restrict it to the non-contractible component of the free loop space in which the curves are homotopic to $h$.

In 1973, Katok \cite{Kat73} found some non-symmetric Finsler metrics on $S^n$ with only finitely many prime closed geodesics.
The smallest number of closed geodesics on $S^n$ that one obtains in these examples is $2[\frac{n+1}{2}]$ (cf. \cite{Zil83}),
which implies that our Theorem \ref{T:main-2} gives optimal lower bound estimation of the number of non-contractible closed geodesics of class $[h]$.
The crucial point of the proof is the $S^1$-equivariant Poincar$\acute{e}$ series of the non-contractible component of the free loop space in
which the curves are homotopic to $h$ derived by Taimanov \cite{Tai16}.
\end{remark}

We obtain the following generic results about the existence of infinitely many distinct non-contractible closed geodesics.
\begin{theorem} \label{T:main-4}
	There are infinitely many distinct non-contractible closed geodesics of nontrivial class $[h]$ on compact Finsler space form $(S^n/\Gamma,F)$ for $C^4$-generic metrics for $n \geq 3$, if the flag curvatures satisfy
	\be
	\frac{\lambda^2}{(\lambda+1)^2} < K \leq 1, \;\; \text{ if $n$ is odd, }  \text{ or }
	(\frac{\lambda}{\lambda +1})^2 \frac{4}{(n-1)^2} < K \leq 1, \;\; \text{ if $n$ is even. }
	\ee
\end{theorem}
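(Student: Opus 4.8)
The plan is to argue by contradiction, combining the Finsler bumpy metric theorem, the curvature‑pinching index estimates, the resonance identity for non‑contractible closed geodesics of class $[h]$, and Taimanov's computation of the $S^1$‑equivariant homology of $\Lambda_h M$.

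First I would reduce to bumpy metrics. Suppose $(S^n/\Gamma, F)$ carried only finitely many prime non‑contractible closed geodesics of class $[h]$, say $c_1,\dots,c_k$. Since $C^4$‑generic Finsler metrics are bumpy (the Finsler analogue of Abraham's bumpy metric theorem), it suffices to rule this out for bumpy $F$; then all $c_j$ together with the iterates $c_j^{1+mp}$, $m\ge 0$ — which are precisely the iterates of $c_j$ that again lie in the component $\Lambda_h M$, since $h$ has order $p$ — are non‑degenerate. Recall that such an iterate contributes a rank‑one local $S^1$‑critical group in degree $i(c_j^{1+mp})$ when it is a good iterate (index parity equal to that of $c_j$), and contributes nothing to $H_*^{S^1}(\Lambda_h M;\Q)$ otherwise.

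Next I would exploit the flag‑curvature pinching. Lifting geodesics to the covering sphere and comparing with great circles through the Morse index theorem and the Morse--Schoenberg comparison theorem (as in Rademacher \cite{Rad07} and in Ballmann--Thorbergsson--Ziller), the bound $\delta=\inf K$ yields a linear lower bound $i(c_j^{1+mp})\ge \alpha m+\beta$ whose slope $\alpha$ is controlled from below in terms of $\tfrac{\sqrt{\delta}(\lambda+1)}{2\lambda}(n-1)$; hence $\hat i(c_j)>0$ and is bounded below by a definite constant $I_0(\delta,\lambda,n)$. The two stated pinching thresholds are exactly the values at which $I_0$ crosses the critical quantity needed in the last step: for $n$ odd the great‑circle model on $S^n$ forces the stronger bound $\lambda^2/(\lambda+1)^2<K$, whereas for $n$ even (so $\Gamma=\Z_2$, $M=\R P^n$, $p=2$) the weaker bound $\tfrac{4}{(n-1)^2}\lambda^2/(\lambda+1)^2<K$ suffices. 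The same comparison fixes the parity pattern of $\{i(c_j^{1+mp})\}_m$, so the mean equivariant Euler characteristic $\hat\chi(c_j)$ of the local critical groups is well defined and satisfies $|\hat\chi(c_j)|\le 1$.

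Finally I would invoke the resonance identity of Liu--Long--Xiao \cite{LLX18} for non‑contractible closed geodesics of class $[h]$,
\[
\sum_{j=1}^{k}\frac{\hat\chi(c_j)}{\hat i(c_j)}=\widehat{B}(n,[h]),
\]
where $\widehat{B}(n,[h])\ne 0$ is the constant read off from Taimanov's \cite{Tai16} $S^1$‑equivariant Poincar\'e series of $\Lambda_h M$ (whose Betti numbers are bounded but not eventually zero). This forces some $c_{j_0}$ with $\hat\chi(c_{j_0})\ne 0$, so its good iterates have positive density and generate classes of $H_d^{S^1}(\Lambda_h M;\Q)$ in a positive‑density set of degrees $d=i(c_{j_0}^{1+mp})$. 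Combining this with the equivariant Morse inequalities (the Poincar\'e series of $\Lambda_h M$ is dominated, up to $(1+t)$‑terms, by $\sum_{j}\sum_{m\ \mathrm{good}}t^{i(c_j^{1+mp})}$) and with Taimanov's explicit formula pins down the admissible mean indices, and feeding the pinching bound $\hat i(c_j)\ge I_0$ together with $|\hat\chi(c_j)|\le 1$ back into the displayed identity produces an estimate for $\widehat{B}(n,[h])$ incompatible with its actual value at the stated thresholds. The main obstacle is precisely this endgame: upgrading the resonance identity from a mere lower bound for $k$ (which is all Theorems \ref{T:main-1}--\ref{T:main-3} require) to an outright impossibility. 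This needs the exact value of $\widehat{B}(n,[h])$ from \cite{Tai16}, the discreteness of the admissible mean indices forced by the eventual periodicity of Taimanov's series, and the sharp great‑circle index comparison on the covering sphere — carried out separately in the odd‑dimensional case and in the case $M=\R P^n$ with $n$ even (with $n\ge 3$, so that the pinching hypotheses are non‑vacuous).
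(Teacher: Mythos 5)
There is a genuine gap, and it lies exactly where you flag it: the ``endgame'' you hope for does not exist, and the reduction that precedes it is already fatal. You reduce the generic statement to proving that \emph{every} bumpy metric satisfying the pinching carries infinitely many closed geodesics of class $[h]$, and then try to extract an outright contradiction from the resonance identity of \cite{LLX18} together with the pinching bounds on $\hat i(c_j)$. But bumpiness is the wrong generic condition here: Katok-type metrics \cite{Kat73} descend to these space forms, are bumpy, have constant flag curvature $K\equiv 1$ with reversibility $\lambda$ arbitrarily close to $1$ (so they satisfy both pinching hypotheses of the theorem), and carry only finitely many closed geodesics --- indeed these are precisely the examples the paper invokes to show that the bound $2[\frac{n+1}{2}]$ of Theorem \ref{T:main-2} is optimal. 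Consequently the resonance identity, the equivariant Morse inequalities against Taimanov's Poincar\'e series \cite{Tai16}, and the index estimates are all simultaneously satisfiable with finitely many geodesics of class $[h]$; the identity can only ever yield lower bounds on their number (which is all Theorems \ref{T:main-1}--\ref{T:main-3} extract), never the impossibility you need. No amount of sharpening the great-circle comparison or of discreteness of admissible mean indices can repair this, because explicit finite counterexamples sit inside the class of metrics you reduced to.

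The paper's route is different and avoids this trap by building the genericity into a condition that \emph{does} exclude the Katok examples: the residual set $\CF$ of $C^4$-metrics for which distinct closed geodesics have distinct mean average index $\alpha(c)=\hat i(c)/L(c)$, obtained by perturbing osculating Riemannian metrics via Klingenberg--Takens \cite{KT72} (this is why the statement is $C^4$- rather than $C^2$-generic). On the other side, the Fadell--Rabinowitz machinery of Sections \ref{S:FR}--\ref{S:generic} (Proposition \ref{P:mai}, Corollary \ref{L:Dens}, Proposition \ref{P:Generic}) shows that if there were only finitely many closed geodesics of class $[h]$ under the stated pinching, then at least two of them would have the \emph{same} mean average index $2\sigma$ --- contradicting membership in $\CF$. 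If you want to salvage your outline, you must replace ``bumpy'' by a generic condition of this strength (equidistinct mean average indices, or the generic conditions of \cite{Rad94}, \cite{RT20}) and replace the resonance-identity contradiction by the equal-mean-index mechanism; as written, the proposal proves a statement that is false for bumpy metrics and hence cannot close.
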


\begin{remark}
In contrast to \cite{Rad94}, we obtain $C^4$-generic results instead of $C^2$ ones since we work with osculating Riemannian metrics defined in \eqref{E:Osc-R} as considering perturbations of Finsler metrics.
	
\end{remark}

This paper is organized as follows. In section \ref{S:Morse}, we review the necessary backgrounds about Morse theory on the loop space.
In section \ref{S:FR}, we apply the Fadell-Rabinowitz index to non-contractible component of the free loop space in which the curves
are homotopic to $h$, and in section \ref{S:Multi} we estimate the Morse index of non-contractible closed geodesics of nontrivial class $[h]$
with suitable pinching conditions and prove Theorem \ref{T:main-1}, \ref{T:main-2} and \ref{T:main-3} of this paper.
In the last section, we prove there are infinitely many distinct non-contractible closed geodesics of nontrivial class $[h]$ on non-simply connected manifolds $S^n/\Gamma$ with $C^4$-generic Finsler metrics.

\section{Morse theory for closed geodesics} \label{S:Morse}

A closed geodesic $\gamma:S^1 =\mathbb{R}/\mathbb{Z} \to M$ on a Finsler manifold $(M,F)$ is a critical point of the energy functional
\be \label{E:energy-E}
E(\gamma)=\frac{1}{2}\int_{S^1}F(\gamma(t),\dot{\gamma}(t))^2 dt\nonumber
\ee
on the free loop space $\Lambda M:= H^1(S^1,M)$ with positive energy. This energy functional is $C^{1,1}$ and invariant under $S^1$-action
\be
(s\cdot\gamma)(t)=\gamma(t+s), \forall t,s \in S^1.\nonumber
\ee
Along a closed geodesic $c$, one could define the index form as well and denote the Morse index by $i(c)$ and the nullity by
$\nu(c) = \dim(ker E^{''}(c))-1$. In particular, if $c$ is non-degenerate, one has $\nu(c) =0$. As we mentioned is section \ref{S:Intro},
$c^m, m \geq 2$ are also closed geodesics, thus these indices are also well defined on $c^m$. Therefore,
we define the average index of closed geodesic $c$ by
\be
\hat{i}(c)=\lim_{m\to\infty}\frac{i(c^m)}{m},\nonumber
\ee
and its mean average index by
\be
\alpha(c) = \frac{\widehat{i}(c)}{L(c)},
\ee
where $L(c)$ is the length of $c$ defined in (1.1).

In the following we let $M=S^n/ \Gamma$ and $h$ be a nontrivial element of finite order $p$ in
$\pi_1(M)$, where the integer $n\geq2$,
$\Gamma$ is a finite abelian group which acts freely and isometrically on the $n$-sphere and therefore
$M$ is diffeomorphic to a compact space form. Then the free loop space $\Lambda M$ possesses
a natural decomposition
$$ \Lambda M=\bigsqcup_{g\in \pi_1(M)}\Lambda_g M,  $$
where $\Lambda_g M$ is the connected component of $\Lambda M$ whose elements are homotopic
to $g$. We set $\Lambda_{h}(c) = \{\gamma\in \Lambda_{h}M\mid E(\gamma)<E(c)\}$.
Note that for a non-contractible minimal closed geodesic $c$ of class $[h]$, $c^m\in\Lambda_h M$
if and only if $m\equiv 1(\mod~ p)$ since $\Gamma$ is an abelian group,
where a closed geodesic $c$ of the class $[h]$ is called {\it minimal} if it is not an iteration of any other
closed geodesics in class $[h]$, note that a minimal closed geodesic $c$ of the class $[h]$ may not be prime.

We call a non-contractible minimal closed geodesic $c$ of class $[h]$ satisfies the isolation
condition, if the following holds:

{\bf (Iso)  For all $m\in \mathbb{N}$ the orbit $S^1\cdot c^{p(m-1)+1}$ is an
isolated critical orbit of $E$. }

Note that if the number of non-contractible minimal closed geodesics of class $[h]$ on $M$
is finite, then all the non-contractible minimal closed geodesics of class $[h]$ satisfy (Iso).

Now we restrict the energy functional $E$ on the non-contractible component $\Lambda_h M$ and study the Morse theory on $\Lambda_h M$.
Thus we define the $S^1$-critical modules of $c^{p(m-1)+1}$ for $E|_{\Lambda_h M}$ as
$$ \overline{C}_*(E,c^{p(m-1)+1}; [h])
   = H_*\left((\Lambda_h(c^{p(m-1)+1})\cup S^1\cdot c^{p(m-1)+1})/S^1,\Lambda_h(c^{p(m-1)+1})/S^1; \mathbb{Q}\right).$$
Following \cite{Rad92}, Section 6.2, we can use finite-dimensional approximations to $\Lambda_h M$ to apply the results of
D. Gromoll and W. Meyer \cite{GM1969Top} to a given non-contractible closed geodesic $c$ of class $[h]$ which is isolated as a critical orbit.
Then we have

\begin{proposition} \label{P:dim-1}
Let $k_j(c)\equiv\dim\overline{C}_j(E,c; [h])$. Then $k_j(c)$  equal to $0$ when $j<i(c)$ or $j>i(c)+\nu(c)$. $k_j(c)$ is
either $0$ or $1$ when $j = i(c)$ or $j=i(c) + \nu(c)$. If $\nu(c)=0$, i.e., $c$ is non-degenerate, then $k_j(c)=1$ holds only for $j=i(c)$.
\end{proposition}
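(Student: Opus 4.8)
The plan is to reduce the computation of the $S^1$-critical module $\overline{C}_*(E,c;[h])$ to a purely local, finite-dimensional question and then to feed it into the Gromoll--Meyer generalized Morse lemma. First I would pass, following \cite{Rad92}, Section~6.2, and \cite{GM1969Top}, to one of Morse's finite-dimensional approximations of $\Lambda_h M$ near the isolated critical orbit $S^1\cdot c$, namely a space of broken geodesics with sufficiently many vertices: for a suitable choice this is a smooth finite-dimensional manifold $N$ containing $S^1\cdot c$, on which $E|_N$ is smooth, has $S^1\cdot c$ as an isolated critical orbit of the \emph{same} Morse index $i(c)$ and nullity $\nu(c)$, and realizes the same critical module. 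This detour is forced on us because $E$ on $\Lambda_h M$ is only $C^{1,1}$, so the smooth Morse lemma is not directly at our disposal.

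Next I would localize at the orbit. Since $h\neq 1$ the geodesic $c$ is non-constant, so $S^1\cdot c$ is a circle whose isotropy group is a finite cyclic group $\mathbb{Z}_m$, with $m$ the multiplicity of $c$. Choosing an $S^1$-invariant tubular neighborhood of the orbit of the form $S^1\times_{\mathbb{Z}_m}\Sigma$, where $\Sigma$ is a normal slice at $c$, one obtains
\[
\overline{C}_*(E,c;[h])\ \cong\ H_*\!\left((\Sigma^-\cup\{c\})/\mathbb{Z}_m,\ \Sigma^-/\mathbb{Z}_m;\ \mathbb{Q}\right),\qquad \Sigma^-:=\{x\in\Sigma\mid E(x)<E(c)\}.
\]
Because we work with $\mathbb{Q}$-coefficients and $\mathbb{Z}_m$ is finite, the right-hand side is the $\mathbb{Z}_m$-invariant subspace of the ordinary local critical group $\mathcal{C}_*:=H_*(\Sigma^-\cup\{c\},\Sigma^-;\mathbb{Q})$ of the smooth function $E|_\Sigma$ at its isolated critical point $c$. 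Since passing to invariants cannot raise dimensions, it suffices to prove the stated bounds for $\dim\mathcal{C}_*$.

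Then I would apply the Gromoll--Meyer splitting lemma to $E|_\Sigma$ at $c$ on the finite-dimensional slice, writing near $c$
\[
E|_\Sigma\ =\ E(c)\ -\ |x_-|^2\ +\ |x_+|^2\ +\ E_0(x_0),\qquad \dim x_-=i(c),\quad \dim x_0=\nu(c),
\]
where $E_0$ has an isolated critical point at the origin of $\mathbb{R}^{\nu(c)}$ with vanishing quadratic part. By the shifting theorem $\mathcal{C}_q$ is isomorphic to the local critical group $\mathcal{C}_{q-i(c)}(E_0,0)$; and the local critical group of a function on $\mathbb{R}^{\nu}$ with an isolated critical point and no quadratic part is isomorphic to $\widetilde{H}_{*-1}$ of a subspace $X$ of $S^{\nu-1}$ (its descending part). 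Hence $\mathcal{C}_{q-i(c)}(E_0,0)$ vanishes unless $0\le q-i(c)\le\nu(c)$, is at most one-dimensional for $q-i(c)=0$ (nonzero exactly when $c$ is a local minimum along the null directions, i.e. $X=\emptyset$), and is at most one-dimensional for $q-i(c)=\nu(c)$ ($\mathbb{Q}$ when $X=S^{\nu(c)-1}$, $0$ when $X$ is a proper subset). Taking $\mathbb{Z}_m$-invariants preserves these vanishing and ``at most $1$'' statements, which is precisely the assertion for $k_j(c)=\dim\overline{C}_j(E,c;[h])$. Finally, if $\nu(c)=0$ the variable $x_0$ is absent, $\mathcal{C}_*$ is $\mathbb{Q}$ concentrated in degree $i(c)$, and its $\mathbb{Z}_m$-invariant part is $\mathbb{Q}$ or $0$ in that single degree; hence $k_j(c)=1$ can only occur at $j=i(c)$.

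The main obstacle is the careful bookkeeping in the first two steps rather than any single hard estimate: one must verify that the finite-dimensional broken-geodesic approximation is compatible with the \emph{full} $S^1$-action in a neighborhood of $S^1\cdot c$ (the broken-geodesic spaces are only invariant under a cyclic subgroup, so one works with the $S^1\times_{\mathbb{Z}_m}\Sigma$ description of a neighborhood of the orbit and chooses the number of vertices divisible by $m$), and one must track that $\mathbb{Z}_m$ may act nontrivially on the one-dimensional extremal local critical groups --- this is exactly why the coefficients are rational and why the conclusion reads ``$0$ or $1$'' rather than ``$1$''. Granting these points, the remainder is the verbatim Gromoll--Meyer package of \cite{GM1969Top} applied on the slice $\Sigma$.
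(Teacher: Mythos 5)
Your proposal is correct and follows essentially the same route as the paper, which proves this proposition simply by invoking finite-dimensional approximations of $\Lambda_h M$ (Rademacher \cite{Rad92}, Section 6.2) to apply the Gromoll--Meyer local theory \cite{GM1969Top} at the isolated critical orbit; your write-up just makes explicit the standard steps hidden in that citation: localization to the slice $S^1\times_{\mathbb{Z}_m}\Sigma$, passage to $\mathbb{Z}_m$-invariants over $\mathbb{Q}$, and the splitting/shifting theorem with the extremal-degree bounds for the degenerate part.
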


An important ingredient in our proof of Theorem 1.2 is the following result due to Taimanov \cite{Tai16}.
\begin{lemma}[Theorem 3 of \cite{Tai16} or Lemma 2.3 of \cite{LLX18}] \label{L:Betti-Tai}
Let $\Gamma$ be a finite abelian group. For $M=S^n/\Gamma$, and a nontrivial $h \in \pi_1(M)$ we have
\begin{enumerate}
\item When $n=2k+1$ is odd, the $S^1$-cohomology ring of $\Lambda_h M$ has the form
$$H^{S^1, *}(\Lambda_h M; \mathbb{Q})=\mathbb{Q}[w, z]/ \{w^{k+1} = 0\}, \quad deg(w)=2, \quad deg(z)=2k$$
Then the $S^1$-equivariant Poincar$\acute{e}$ series
of $\Lambda_h M$ is given by
\begin{equation}
\begin{split}
P^{S^1}(\Lambda_h M; \mathbb{Q})(t) & =\frac{1-t^{2k+2}}{(1-t^2)(1-t^{2k})} \\
&=\frac{1}{1-t^2}+\frac{t^{2k}}{1-t^{2k}}\\
&=(1+t^2+t^4+\cdots+t^{2k}+\cdots)+(t^{2k}+t^{4k}+t^{6k}+\cdots),
\end{split}\nonumber
\end{equation}
which yields Betti numbers
\be
b_q = \mathrm{rank} H_q^{S^1}(\Lambda_h M;\mathbb{Q}) = \begin{cases}
    2,&\quad {\it if}\quad q\in \{j(n-1)\mid j\in\mathbb{N} \} \\
    1,&\quad {\it if}\quad q\in2\mathbb{N}_0\setminus \{j(n-1)\mid j\in\mathbb{N}\} \\
    0, &\quad {\it otherwise}.
    \end{cases}
\ee

\item When $n=2k$ is even, the $S^1$-cohomology ring of $\Lambda_h M$ has the form
$$H^{S^1, *}(\Lambda_h M; \mathbb{Q})=\mathbb{Q}[w, z]/ \{w^{2k} = 0\}, \quad deg(w)=2, \quad deg(z)=4k-2$$
Then the $S^1$-equivariant Poincar$\acute{e}$ series
of $\Lambda_h M$ is given by
\be
\begin{split}
P^{S^1}(\Lambda_h M; \mathbb{Q})(t) & =\frac{1-t^{4k}}{(1-t^2)(1-t^{4k-2})} \\
 & = \frac{1}{1-t^2}+\frac{t^{4k-2}}{1-t^{4k-2}}\\
&= (1+t^2+t^4+\cdots+t^{2k}+\cdots)\\ &~~~~~~~~~~~~~~~+(t^{4k-2}+t^{2(4k-2)}+t^{3(4k-2)}+\cdots),
\end{split}\nonumber
\ee
which yields Betti numbers
\be
b_q = \mathrm{rank} H_q^{S^1}(\Lambda_h M;\mathbb{Q})
= \begin{cases}
    2,&\quad {\it if}\quad q\in \{2j(n-1)\mid j\in\mathbb{N}\}, \\
    1,&\quad {\it if}\quad q\in2\mathbb{N}_0\setminus \{2j(n-1)\mid j\in\mathbb{N}\}, \\
    0, &\quad {\it otherwise}.
\end{cases}
\ee
\end{enumerate}
\end{lemma}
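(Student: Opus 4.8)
We sketch the argument of Taimanov \cite{Tai16}.

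\emph{Reduction to the round sphere.} Let $\pi\colon S^n\to M=S^n/\Gamma$ be the universal covering. A free loop $\gamma$ of class $[h]$ lifts to a path $\sigma\colon[0,1]\to S^n$ with $\sigma(1)=h\cdot\sigma(0)$, and conversely; hence the space $\CN_h$ of such ``$h$-twisted loops'' is a principal $\Gamma$-covering of $\Lambda_h M$, so that over $\Q$ one may recover $H^{S^1,*}(\Lambda_h M;\Q)$ from the $\Gamma$-invariants of the equivariant cohomology of $\CN_h$ (once the circle actions are matched, see below). The circle acts on $\CN_h$ by the twisted rotation $\sigma\mapsto\widetilde{\sigma}(\,\cdot+s)$, where $\widetilde{\sigma}(t+1):=h\widetilde{\sigma}(t)$; after rescaling the parameter, $\CN_h$ is exactly the fixed-point set $(\Lambda S^n)^{\Z_p}$ for the $\Z_p\cong\langle h\rangle$-action on the free loop space $\Lambda S^n$ that combines the deck transformation $h^{-1}$ with the rotation by $1/p$, the residual circle being the rotation action on $\Lambda_h M$. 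Finally, evaluation at the basepoint presents $\CN_h$ as the total space of a fibre bundle
\[
\Omega S^n\longrightarrow\CN_h\longrightarrow S^n
\]
with fibre the based loop space, recalling that $H^*(\Omega S^{2k+1};\Q)=\Q[u]$ with $\deg u=2k$, while $H^*(\Omega S^{2k};\Q)=\Lambda(v)\otimes\Q[u]$ with $\deg v=2k-1$ and $\deg u=4k-2$.

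\emph{The non-equivariant cohomology ring.} If $n=2k+1$, freeness of the isometric $\Gamma$-action forces $\deg g=+1$ for every $g\in\Gamma$ (Lefschetz), so $h\simeq\mathrm{id}_{S^n}$, the bundle above admits a section, the Serre spectral sequence collapses, and $H^*(\CN_h;\Q)\cong\Lambda(a)\otimes\Q[u]$ with $\deg a=n$, $\deg u=n-1$. Since $\Gamma$ acts by degree $+1$ on $H^*(S^n;\Q)$ and on $H^*(\Omega S^n;\Q)$, it acts trivially here, whence $H^*(\Lambda_h M;\Q)\cong\Lambda(a)\otimes\Q[u]$. If $n=2k$, then $M=\R P^{2k}$ and $\deg h=-1$, so no section exists, the spectral sequence no longer collapses, and a standard computation (consistent with the known results for $\R P^n$) gives $H^*(\Lambda_h M;\Q)\cong\Lambda(b)\otimes\Q[u]$ with $\deg b=2n-1$ and $\deg u=2(n-1)$.

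\emph{Passage to equivariant cohomology.} Since $h\neq1$, $\Lambda_h M$ contains no constant loops, so the $S^1$-action has empty fixed-point set; by the Borel localization theorem $H^{S^1,*}(\Lambda_h M;\Q)$ is a torsion module over $H^*(BS^1;\Q)=\Q[w]$, $\deg w=2$. Feeding this, together with the ring $H^*(\Lambda_h M;\Q)$ from the previous step, into the Gysin sequence
\[
\cdots\to H^{S^1,q-2}(\Lambda_h M;\Q)\xrightarrow{\ \cup\,w\ }H^{S^1,q}(\Lambda_h M;\Q)\to H^q(\Lambda_h M;\Q)\to H^{S^1,q-1}(\Lambda_h M;\Q)\to\cdots,
\]
one checks degree by degree that $H^{S^1,*}(\Lambda_h M;\Q)$ must be $\Q[w,z]/\{w^{k+1}=0\}$ when $n=2k+1$ and $\Q[w,z]/\{w^{2k}=0\}$ when $n=2k$, with $z$ of degree $n-1$ (resp.\ $2(n-1)$) carried by the polynomial generator $u$ of $H^*(\Omega S^n;\Q)$, whose infinite order fixes the exponent of the single relation; the ring structure follows because $H^{S^1,*}(\Lambda_h M;\Q)/(w)\cong\Q[z]$ is freely generated by $z$. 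Expanding the resulting generating function yields the stated $S^1$-equivariant Poincar\'{e} series, and hence the Betti numbers $b_q$.

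\emph{The main difficulty.} Two points require real care. First, making the identification $\CN_h\cong(\Lambda S^n)^{\Z_p}$ genuinely $S^1$-equivariant: on $\CN_h$ the circle is only a twisted rotation --- translation by $1$ is the deck transformation $h$, not the identity --- so one must keep careful track of how this circle, the group $\Gamma$, and the covering $\CN_h\to\Lambda_h M$ interact before taking rational invariants. Second, the Gysin sequence together with the non-equivariant ring pins down only the additive and $\Q[w]$-module structure; isolating the precise relation $w^{k+1}=0$ --- equivalently, the exact set $q\in\{j(n-1)\}$ on which $b_q=2$ --- is where Taimanov's explicit geometric model of the non-contractible loop space becomes indispensable.
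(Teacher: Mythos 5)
The paper does not prove this lemma: it is quoted from Theorem 3 of \cite{Tai16} (see also Lemma 2.3 of \cite{LLX18}), so your sketch has to be measured against Taimanov's argument rather than against anything in the text. Your outline does set up the right geometry --- the $\Gamma$-cover of $\Lambda_h M$ by the space $\mathcal{N}_h$ of $h$-twisted paths in $S^n$, the fibration $\Omega S^n\to\mathcal{N}_h\to S^n$, passage to rational $\Gamma$-invariants --- and the non-equivariant rings you quote are the correct ones. But as a proof it has a genuine gap, and it is exactly the one you flag yourself: the passage from $H^*(\Lambda_h M;\Q)$ to the equivariant ring, in particular the relation $w^{k+1}=0$ (resp.\ $w^{2k}=0$), does not follow from the Gysin sequence of $\Lambda_h M\to(\Lambda_h M)_{S^1}\to BS^1$ together with torsionness over $\Q[w]$.

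Concretely, in the odd case consider the graded $\Q[w]$-module built from one copy of $\Q[w]/(w^{2k+1})$ with generator in degree $0$, one copy of $\Q[w]/(w)$ with generator in degree $2k$, and one copy of $\Q[w]/(w^{k+1})$ with generator in degree $2kj$ for every $j\ge 2$: it has exactly the same kernel and cokernel of multiplication by $w$ (one-dimensional in degrees $2k(j+1)$ and $2kj$, $j\ge 0$, respectively) as $\Q[w,z]/(w^{k+1})$, hence is compatible with the same Gysin sequence and the same $H^*(\Lambda_h M;\Q)$, yet in it $w^{k+1}\neq 0$. So ``checking degree by degree'' cannot isolate the stated ring; in \cite{Tai16} this is supplied by an explicit equivariant model of the space of twisted loops, not by the exact sequence, and your ``degree-by-degree'' step also tacitly needs to exclude odd-degree torsion classes, which you do not address. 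The same applies to your first flagged point: on $\mathcal{N}_h$ translation by $1$ is the deck transformation $h$, so the group acting upstairs is a $p$-fold cover of the circle acting on $\Lambda_h M$, and identifying $H^{S^1,*}(\Lambda_h M;\Q)$ with invariants of an equivariant cohomology upstairs must be justified at the level of Borel constructions; you state the difficulty but do not resolve it. Two smaller points: torsionness via localization needs finite-dimensional approximations, since $\Lambda_h M$ is an infinite-dimensional Hilbert manifold (cf.\ \cite{Rad92}), and the even-dimensional non-equivariant computation is asserted (``a standard computation'') rather than carried out. In sum, the proposal is a sensible expository outline whose decisive steps are deferred back to \cite{Tai16} --- which is in effect what the paper itself does by citing the result --- but it is not an independent proof.
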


Note that for a non-contractible minimal closed geodesic $c$ of class $[h]$, $c^m\in\Lambda_h M$
if and only if $m\equiv 1(\mod~ p)$.
Thus we have the Morse inequality for non-contractible closed geodesics of class $[h]$:
\begin{lemma}[Theorem I.4.3 of \cite{Chan93}]
Assume that $M=S^n/\Gamma$ be a Finsler manifold with finitely many non-contractible minimal closed geodesics of class $[h]$,
denoted by $\{c_j\}_{1\le j\le k}$. Set
\begin{align}
M_q =\sum_{1\le j\le k,\; m\ge 1}\dim{\overline{C}}_q(E, c^{p(m-1)+1}_j; [h]),\quad q\in\mathbb{Z}.
\end{align}
Then for every integer $q \ge 0$ there holds
\bea
M_q \ge {b}_q. \eea
\end{lemma}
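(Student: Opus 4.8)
The plan is to transplant the classical Morse inequalities to the $S^1$-invariant component $\Lambda_h M$ via the energy functional $E$ restricted to $\Lambda_h M$, whose critical orbits are exactly the orbits $S^1\cdot c_j^{p(m-1)+1}$ with $1\le j\le k$, $m\ge 1$ — recall that since $\Gamma$ is abelian every closed geodesic of class $[h]$ is an iterate $c^{p(m-1)+1}$ of a minimal one. Because the number of minimal closed geodesics of class $[h]$ is finite, each such orbit is isolated, i.e. (Iso) holds; moreover $E|_{\Lambda_h M}$ is bounded below by $0$, is $C^{1,1}$, and satisfies the Palais--Smale condition on the complete Hilbert manifold $\Lambda_h M$. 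Hence each $S^1$-invariant sublevel set $\Lambda_h^{\le a}:=\{\gamma\in\Lambda_h M:E(\gamma)\le a\}$ contains only finitely many critical orbits, and $\Lambda_h M=\bigcup_{a>0}\Lambda_h^{\le a}$.

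First I would build a filtration adapted to the critical values. These form a discrete subset of $(0,\infty)$ accumulating only at $+\infty$, so one can choose regular values $0=a_0<a_1<a_2<\cdots\to\infty$ of $E|_{\Lambda_h M}$ so that each band $\Lambda_h^{\le a_\ell}\setminus\Lambda_h^{<a_{\ell-1}}$ contains exactly one critical orbit. By the $S^1$-equivariant deformation lemma away from critical levels together with the Gromoll--Meyer local description of an isolated (generally non-free, with finite cyclic isotropy) critical orbit --- precisely the finite-dimensional-approximation argument already invoked before Proposition \ref{P:dim-1} --- one obtains, for the band containing $S^1\cdot c_j^{p(m-1)+1}$,
\[
H_q\!\left(\Lambda_h^{\le a_\ell}/S^1,\ \Lambda_h^{\le a_{\ell-1}}/S^1;\ \mathbb{Q}\right)\ \cong\ \overline{C}_q\!\left(E,\,c_j^{p(m-1)+1};\,[h]\right).
\]

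Second comes the algebra. Over the field $\mathbb{Q}$ rank is subadditive in short exact sequences, so the long exact sequences of the pairs $\left(\Lambda_h^{\le a_\ell}/S^1,\Lambda_h^{\le a_{\ell-1}}/S^1\right)$ together with induction on $L$ yield
\[
\mathrm{rank}\, H_q\!\left(\Lambda_h^{\le a_L}/S^1;\,\mathbb{Q}\right)\ \le\ \sum \dim\overline{C}_q\!\left(E,c;[h]\right),
\]
the sum being over all critical orbits $S^1\cdot c$ of $E|_{\Lambda_h M}$ with $E(c)\le a_L$. Since homology commutes with the directed colimit of the exhaustion, and (as $h\neq 1$, so $\Lambda_h M$ has no constant loops and the $S^1$-isotropy is everywhere finite) $H_*(\Lambda_h M/S^1;\mathbb{Q})\cong H_*^{S^1}(\Lambda_h M;\mathbb{Q})$, letting $L\to\infty$ turns the left-hand side into $b_q$ and the right-hand side into $M_q$, giving the claimed inequality $M_q\ge b_q$. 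By Proposition \ref{P:dim-1} each summand is $0$ or $1$ and vanishes unless $i(c)\le q\le i(c)+\nu(c)$, but this refinement is not needed here.

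The main obstacle is the non-compactness of $\Lambda_h M$ together with the presence of infinitely many critical orbits --- the iterates $c_j^{p(m-1)+1}$ have energies tending to $+\infty$ --- which forces the whole argument to be organized through the sublevel-set exhaustion with a colimit taken only at the end, rather than on one compact piece; at the same time the $S^1$-action must be carried throughout, and since the orbits $S^1\cdot c^{p(m-1)+1}$ are in general not free, the correct local model is the equivariant Gromoll--Meyer one, which is exactly why the critical modules $\overline{C}_*$ are defined via finite-dimensional approximations. Everything else is the standard packaging of the Morse inequalities as in \cite{Chan93}.
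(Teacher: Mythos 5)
Your proof is correct and is essentially the argument the paper itself delegates to Chang's Theorem I.4.3: equivariant Gromoll--Meyer critical modules for the isolated orbits $S^1\cdot c_j^{p(m-1)+1}$, rank subadditivity along the sublevel filtration of $E|_{\Lambda_h M}$, and a colimit over the exhaustion, with the rational $S^1$-equivariant homology identified with the homology of the quotient because the isotropy is everywhere finite (no constant loops since $h\neq 1$). The only cosmetic point is that distinct critical orbits may share an energy level, so a band need not contain exactly one orbit; but the band homology then splits as the direct sum of the critical modules of the finitely many isolated orbits at that level (exactly as in the decomposition used in the proof of Theorem 3.6), and your subadditivity estimate goes through unchanged.
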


\section{Fadell-Rabinowitz index for non-contractible closed geodesics} \label{S:FR}

In 1978, Fadell and Rabinowitz \cite{FR78} introduced a topological index theory by using the equivariant Borel
cohomology. Via a classifying map the cohomology ring $H^*(BG)$ of a classifying space is a subring of the equivariant cohomology
$H_G^*(X)$ of the G-space $X$. For a characteristic class $\eta\in H^*(BG)$ the Fadell-Rabinowitz index $index_\eta X$ of X
is the smallest non-negative integer k with $\eta^k=0$ in $H_G^*(X)$. Hence the classes $\eta^j$
for $j=0, \ldots, index_\eta X- 1$ define a sequence of subordinate cohomology classes.
Fadell and Rabinowitz used this index to study the bifurcation of time periodic solutions
from an equilibrium solution for Hamiltonian systems of ordinary differential
equations. Ekeland and Hofer in \cite{EH87} of 1987 used this index for the group $S^1$ to
derive a close relationship between the set of Maslov-type indices of closed characteristics on convex
Hamiltonian energy surfaces in $\R^{2n}$ and the set of even
positive integers, which is the core in studying multiplicity and ellipticity of
closed characteristics on compact convex hypersurfaces (cf. \cite{LZ02}). Rademacher in \cite{Rad94} of 1994 developped
the relative version of Fadell-Rabinowitz index to study the existence of closed geodesics on compact simply connected
manifolds, which played an important role in studying multiplicity and stability of
closed geodesics on Finsler spheres, cf. \cite{Rad07}, \cite{Wan13}.

In this section, we use only singular homology modules with $\mathbb{Q}$-coefficients and apply the Fadell-Rabinowitz
index in an absolute version to the non-contractible component $\Lambda_h M$ of the free
loop space on $M=S^n/\Gamma$, which is the main ingredient for proving our Theorem 1.1 and Theorem 1.3.
The main idea is similar with previous work \cite{Liu17}, which is  to extend the work of Rademacher \cite{Rad94} to
the non-contractible component $\Lambda_h M$.
Thus some of the following lemmas and theorems may look like those of  \cite{Liu17} and \cite{Rad94}.
However, we note that the topological structure of $S^n/\Gamma$ is actually very different to $\mathbb{R}P^n$ when $n$ is
odd and thus our results are new.

It is well known that, due to \cite{LF51} for example, there always exists a non-contractible closed geodesic $c$ on $(S^n/\Gamma,F)$,
which is the minimal point of $E$ on $\Lambda_h M$. Let $E(c) = \frac{\kappa_1^2}{2}$ for $\kappa_1 >0$. For any $\kappa > 0$, we denote
\be
\Lambda_h^\kappa := \{ \gamma \in \Lambda_h M \big| E(\gamma) \leq \frac{1}{2} \kappa^2\}
\ee
We define the function
\be
d_h(\kappa) = index_{\eta} \Lambda_h^\kappa : [\kappa_1,+\infty] \to \mathbb{N}\cup \{\infty\}
\ee
where $H^*(BS^1) = H^*(CP^\infty)= \mathbb{Q}[\eta]$.

\begin{definition} \label{D:G-Index}
The lower index $\underline{\sigma}$ and upper index $\overline{\sigma}$ are given by
\be
\underline{\sigma} := \liminf_{\kappa \to \infty} \frac{d_h(\kappa)}{\kappa}, \quad
\overline{\sigma}:=\limsup_{\kappa \to \infty} \frac{d_h(\kappa)}{\kappa},
\ee
and $[\underline{\sigma},\overline{\sigma}]$ is called the global index interval.
\end{definition}

Similarly to Proposition 5.2 of \cite{Rad94}, we have the following estimates for the global index interval
which are extentions of Lemma 3.2 of \cite{Liu17}.
\begin{lemma} \label{L:Cur}
Let $K$ and $Ric$ be the flag curvature and Ricci curvature of the manifold $(S^n/\Gamma,F)$ respectively. Then we have
\begin{itemize}
\item If $K \leq \Delta^2$, then $\overline{\sigma} \leq \frac{\Delta (n-1)}{2 \pi}$.

\item If $K \geq \delta^2$, then $\overline{\sigma} \geq \frac{\delta(n-1)}{2 \pi}$.

\item If $Ric \geq \delta^2(n-1)$, then $\underline{\sigma} \geq \frac{\delta}{2\pi}$.
\end{itemize}
\end{lemma}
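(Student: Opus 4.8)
The plan is to adapt the classical comparison-theoretic argument of Rademacher (Proposition 5.2 of \cite{Rad94}), carried out in \cite{Liu17} for $\mathbb{R}P^n$, to the non-contractible component $\Lambda_h M$ of $\Lambda(S^n/\Gamma)$. The key observation is that the Fadell-Rabinowitz index $d_h(\kappa) = \mathrm{index}_\eta \Lambda_h^\kappa$ is monotone in $\kappa$ and jumps only across critical levels of $E|_{\Lambda_h M}$; moreover, by the standard subordination property of the index, between two consecutive sublevels whose indices differ one finds an $S^1$-orbit of closed geodesics $c$ of class $[h]$ (a critical orbit), and the jump across $E(c)$ is controlled by the local $S^1$-equivariant homology, hence by Proposition 3.1 by $\nu(c)+1 \le n$ (since $c$ lives on an $n$-manifold). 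Thus $d_h$ increases, asymptotically, at a rate governed by how the energy levels $\tfrac12\kappa^2$, equivalently the lengths $L(c^{p(m-1)+1})$, are spaced relative to the growth of the Morse indices $i(c^{p(m-1)+1})$; this is exactly the mean average index $\alpha(c)=\widehat i(c)/L(c)$ entering. So the first step is to establish the dictionary: $\underline\sigma,\overline\sigma$ are, respectively, the smallest and largest values of $\tfrac{1}{2\pi}\alpha(c)\cdot(\text{normalization})$ attained among the relevant closed geodesics — more precisely, to show $d_h(\kappa)/\kappa$ is squeezed between quantities of the form $\alpha(c)/(2\pi)$ up to $O(1/\kappa)$ error terms coming from the bounded-size index jumps.

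Next I would bring in the curvature hypotheses through the index comparison theorems for closed geodesics. If $K \le \Delta^2$, then along any closed geodesic $c$ the Morse index grows at most like that of a geodesic in the sphere of constant curvature $\Delta^2$: concretely $i(c^m) \le (m\cdot \Delta\, L(c)/\pi)(n-1) + O(1)$, which after dividing by $m$ and by $L(c^m)=mL(c)$ gives $\alpha(c) \le \tfrac{\Delta(n-1)}{\pi}$; feeding this into the dictionary yields $\overline\sigma \le \tfrac{\Delta(n-1)}{2\pi}$. Symmetrically, if $K \ge \delta^2$, the index is bounded below by the constant-curvature $\delta^2$ model, $i(c^m) \ge (m\cdot\delta\,L(c)/\pi)(n-1) - O(1)$, giving $\alpha(c)\ge \tfrac{\delta(n-1)}{\pi}$ and hence $\overline\sigma \ge \tfrac{\delta(n-1)}{2\pi}$ (one only needs one closed geodesic realizing the lim sup, or equivalently the estimate on every geodesic). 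For the third bullet one replaces the flag-curvature index comparison by the Ricci-curvature one: $Ric \ge \delta^2(n-1)$ forces the index of any geodesic segment of length $\ell$ to be at least $\lfloor \delta\ell/\pi \rfloor$ (Bonnet–Myers-type index estimate, summing the nullities of the Jacobi equation via the trace), so $\widehat i(c) \ge \tfrac{\delta}{\pi}L(c)$, i.e. $\alpha(c) \ge \tfrac{\delta}{\pi}$, whence $\underline\sigma \ge \tfrac{\delta}{2\pi}$. Throughout, the fact that $h$ has finite order $p$ and $\Gamma$ is abelian is used only to know which iterates $c^{p(m-1)+1}$ remain in $\Lambda_h M$ — the index/length asymptotics are the same along the full iteration sequence, so $\alpha(c)$ is unaffected.

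The main obstacle — and the place where the Finsler, rather than Riemannian, setting and the non-contractibility both bite — is making the dictionary between $d_h(\kappa)$ and the index data rigorous: one must verify that the Fadell-Rabinowitz index of the sublevel set $\Lambda_h^\kappa$ genuinely grows linearly in $\kappa$ with the claimed slope, which requires (i) that $\Lambda_h M$ carries enough cohomological "length" in the $\eta$-direction — this is precisely what Lemma 2.3 supplies, since the $S^1$-equivariant Poincaré series of $\Lambda_h M$ shows $\eta^k \ne 0$ for all $k$, so $\mathrm{index}_\eta\Lambda_h M = \infty$ and $d_h(\kappa)\to\infty$ — and (ii) a careful bookkeeping of how much the index can jump at a single isolated critical orbit, handled by Proposition 3.1 (jump $\le \dim\bigoplus_j \overline C_j(E,c;[h]) \le \nu(c)+1 \le n$), together with the observation that critical values cannot accumulate on any compact interval. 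Once these two facts are in place, the "$d_h(\kappa)/\kappa \to \alpha/(2\pi)$ up to $O(1/\kappa)$" estimate is the same pigeonhole/counting argument as in \cite[Prop.~5.2]{Rad94} and \cite[Lemma~3.2]{Liu17}, and the three bullets follow by plugging in the respective curvature index comparisons. I would therefore organize the proof as: (1) recall the index-comparison estimates for $i(c^m)$ under $K\le\Delta^2$, $K\ge\delta^2$, $Ric\ge\delta^2(n-1)$; (2) recall from Proposition 3.1 the bound on index jumps; (3) prove the linear-growth dictionary for $d_h(\kappa)$, using Lemma 2.3 to guarantee $d_h\to\infty$; (4) combine to read off $\overline\sigma$ and $\underline\sigma$.
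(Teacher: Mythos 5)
Your route through a ``dictionary'' between $d_h(\kappa)/\kappa$ and the mean average indices $\alpha(c)$ is not the paper's argument, and as written it has a real gap: Lemma \ref{L:Cur} is stated (and needed) for \emph{arbitrary} Finsler metrics --- in particular it is applied in Proposition \ref{P:Ind-2} to the constant-curvature (Katok-induced) metric on $S^n/\Gamma$, where critical orbits of $E$ are neither non-degenerate nor isolated. Your bookkeeping of the jumps of $d_h$ rests on Proposition \ref{P:dim-1} (Gromoll--Meyer local critical modules), which requires isolated critical orbits, and on the assertion that ``critical values cannot accumulate on any compact interval,'' which is false for general (non-bumpy) metrics. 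Moreover, the quantitative dictionary you invoke ($d_h(\kappa)/\kappa$ squeezed between values $\alpha(c)/(2\pi)$ up to $O(1/\kappa)$) is exactly what the paper proves only later, in Section \ref{S:generic} (Lemma \ref{L:mean-con}, Proposition \ref{P:mai}), and only under the standing assumption of finitely many closed geodesics of class $[h]$; you cannot use it to prove Lemma \ref{L:Cur} in the generality required. Note also that for the upper bound (a) an estimate on each individual $\alpha(c)$ does not by itself bound $d_h(\kappa)$: one needs to control the cohomology of the whole sublevel set, and indeed the finiteness of $d_h(\kappa)$ itself is extracted in the paper from precisely this proof, so your argument would be using an output of the lemma as an input.

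The paper's proof is more direct and sidesteps all of this: it approximates $F$ by a sequence of \emph{bumpy} metrics $F_i$ in the strong $C^2$ topology, with curvature bounds $\Delta_i^2\to\Delta^2$ and levels $a_i\to a$ such that $\Lambda_h^a\subset\Lambda_{h,i}^{a_i}$; the Morse--Schoenberg comparison theorem bounds the index of every closed geodesic of $F_i$ of length at most $a_i$ by $k_i=(\tfrac{a_i}{\pi}\Delta_i+1)(n-1)$; hence $H^k_{S^1}(\Lambda_{h,i}^{a_i})=0$ for $k>k_i$, and by naturality of the restriction homomorphisms $H^{2k}(BS^1)\to H^{2k}_{S^1}(\Lambda_{h,i}^{a_i})\to H^{2k}_{S^1}(\Lambda_h^{a})$ one gets $\eta^k=0$ in $H^{2k}_{S^1}(\Lambda_h^a)$ for $2k>k_i$, i.e.\ $d_h(a)\le k_i/2$, which yields $\overline{\sigma}\le \Delta(n-1)/(2\pi)$; the lower bounds (b), (c) follow Rademacher's corresponding direct arguments (index lower bounds from $K\ge\delta^2$ or $Ric\ge\delta^2(n-1)$ showing that the $\eta$-powers already survive in sublevel sets of controlled size), again not via mean average indices. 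Your curvature-to-index comparison estimates in step (1) are fine, but to repair the proposal you would have to replace the dictionary by this bumpy-approximation plus sublevel-cohomology argument.
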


\begin{proof}
The proof mostly follows \cite{Rad94}, for the reader's convenience, we sketch a proof here. We only prove (a)
by the Morse-Schoenberg comparison theorem.

Choose a sequence $(F_i)$ of bumpy metrics converging to $F$ in the strong $C^2$ topology. Let
\[
E_i(\gamma)=\frac{1}{2}\int_{S^1}F_i(\gamma(t), \dot{\gamma}(t))^2dt
\]
be the energy functional and $\Lambda_{h,i}^\kappa=\{\gamma\in \Lambda_h M\;|\; E_i(\gamma)\leq\frac{1}{2} \kappa^2\}$.
 Choose $(a_i)\subset \mathbb{R}$ with $a=\lim_{i\rightarrow\infty} a_i$ and $\Lambda_h^a\subset\Lambda_{h,i}^{a_i}$ for all $i$. We can
choose a sequence $(\Delta_i)\subset\mathbb{R}$ such that the flag curvature of $F_i$ is bounded from
above by $\Delta_i^2$ and $\Delta=\lim_{i\rightarrow\infty} \Delta_i$.

It follows from the Morse-Schoenberg comparison
theorem that the index $i(c)$ of a closed geodesic $c$ of the metric $F_i$ with length at most $a_i$ satisfies
\bea i(c)\leq \left(\frac{a_i}{\pi}\Delta_i+1\right)(n-1)=:k_i.\eea
Hence it follows from the Morse lemma that $H^k_{S^1}(\Lambda_{h,i}^{a_i})=0$ for all $k>k_i$.
Since $\Lambda_h^a\subset\Lambda_{h,i}^{a_i}$ it follows from the composition
\be
H^k(BS^1)\rightarrow H_{S^1}^k(\Lambda_{h,i}^{a_i})\rightarrow  H_{S^1}^k(\Lambda_h^{a}).\nonumber
\ee
of restriction homomorphisms that $\eta^k=0$ in $H_{S^1}^{2k}(\Lambda_h^{a})$ for all $k>\frac{k_i}{2}$.
Hence we have $d_h(a)\leq \frac{k_i}{2}$ by (3.2), which together with (3.3)-(3.4) gives
$\overline{\sigma}\leq \frac{\Delta(n-1)}{2\pi}$.
\end{proof}

With similar arguments in \cite{Liu17}, we have
\begin{proposition} \label{P:Ind-2}
For any compact Finsler space form $(S^n/\Gamma,F)$, one has $\lim_{\kappa \to \infty} d_h(\kappa) = \infty$.
\end{proposition}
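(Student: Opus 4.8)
The plan is to reduce the statement to the algebraic fact that the Fadell--Rabinowitz class $\eta$ is not nilpotent in $H^*_{S^1}(\Lambda_h M;\Q)$, and then to read that fact off from Taimanov's computation (Lemma \ref{L:Betti-Tai}). First I would record that $d_h$ is non-decreasing: for $\kappa\le\kappa'$ the inclusion $\Lambda_h^\kappa\hookrightarrow\Lambda_h^{\kappa'}$ is $S^1$-equivariant and compatible with the classifying maps to $BS^1$, so the restriction $H^*_{S^1}(\Lambda_h^{\kappa'};\Q)\to H^*_{S^1}(\Lambda_h^{\kappa};\Q)$ is a $\Q[\eta]$-algebra homomorphism carrying $\eta$ to $\eta$; hence $\eta^{d_h(\kappa')}=0$ already in $H^*_{S^1}(\Lambda_h^{\kappa};\Q)$, which gives $d_h(\kappa)\le d_h(\kappa')$. (This is the same mechanism used in the proof of Lemma \ref{L:Cur}.) In particular $d_\infty:=\lim_{\kappa\to\infty}d_h(\kappa)$ exists in $\BBN\cup\{\infty\}$, and it remains to prove $d_\infty=\infty$.

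Next I would pass to the limit along the exhaustion $\Lambda_h M=\bigcup_{\kappa\ge\kappa_1}\Lambda_h^\kappa$. By the finite-dimensional approximation of the loop space (cf.\ \cite{Rad92}, Section 6.2, already used in Section \ref{S:Morse}), every $H^j_{S^1}(\Lambda_h^\kappa;\Q)$ is a finite-dimensional $\Q$-vector space, so the inverse system $\{H^*_{S^1}(\Lambda_h^\kappa;\Q)\}_\kappa$ is Mittag--Leffler, $\varprojlim^1$ vanishes, and the restriction maps give an isomorphism $H^*_{S^1}(\Lambda_h M;\Q)\cong\varprojlim_\kappa H^*_{S^1}(\Lambda_h^\kappa;\Q)$ carrying $\eta$ to $\eta$. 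Combined with the monotonicity above, for each $j\ge 0$ one has $\eta^j=0$ in $H^*_{S^1}(\Lambda_h M;\Q)$ if and only if $\eta^j=0$ in $H^*_{S^1}(\Lambda_h^\kappa;\Q)$ for all large $\kappa$, i.e.\ if and only if $d_\infty\le j$. Thus $d_\infty=index_\eta\,\Lambda_h M$, and the proposition is equivalent to the claim that $\eta$ is not nilpotent in $H^*_{S^1}(\Lambda_h M;\Q)$.

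Finally I would invoke Lemma \ref{L:Betti-Tai}: for $n$ odd as well as for $n$ even it describes $H^*_{S^1}(\Lambda_h M;\Q)$ explicitly, and in both cases this ring contains a polynomial subalgebra on a degree-two class, namely the image of the generator of $H^2(BS^1;\Q)$ under the classifying map $\Lambda_h M\times_{S^1}ES^1\to BS^1$. Hence $\eta^j\neq 0$ for all $j$, so $index_\eta\,\Lambda_h M=\infty$ and therefore $d_\infty=\infty$. The step I expect to be the real obstacle is precisely this last one --- verifying that the Fadell--Rabinowitz class $\eta$ is the \emph{non-nilpotent} polynomial generator in Taimanov's ring rather than a nilpotent element; this is where the topology of $S^n/\Gamma$ genuinely enters (for a flat manifold such as a torus the analogous statement fails, because there $H^*_{S^1}(\Lambda_h M;\Q)$ is finite-dimensional), and it can be checked by restricting $\eta$ to $S^1$-invariant finite-dimensional subsets of $\Lambda_h M$, for instance those carried by the iterates $c^{p(m-1)+1}$ of a closed geodesic $c$ of class $[h]$ furnished by \cite{LF51}. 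Everything else in the argument is formal.
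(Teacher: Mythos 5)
Your route is genuinely different from the paper's, and its crucial last step has a real gap. You reduce the proposition to the non-nilpotency of $\eta$ in $H^{*}_{S^1}(\Lambda_h M;\Q)$ and then claim this can be read off from Lemma \ref{L:Betti-Tai} because ``this ring contains a polynomial subalgebra on a degree-two class, namely the image of the generator of $H^2(BS^1)$.'' Lemma \ref{L:Betti-Tai} does not deliver this: it only gives an abstract presentation $\Q[w,z]/\{w^{k+1}=0\}$ (odd case, $\deg w=2$, $\deg z=2k$), resp. $\Q[w,z]/\{w^{2k}=0\}$ (even case, $\deg z=4k-2$), with no statement about which element is the image of $\eta$. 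Worse, for $n\geq 5$ odd (and $n\geq 4$ even) the degree-two component of that presented ring is spanned by the single nilpotent generator $w$, so the presentation contains no non-nilpotent degree-two element at all; the fact you need cannot be extracted from the lemma as stated, and your own proposed verification does not repair this. Restricting $\eta$ to the orbits of the iterates $c^{p(m-1)+1}$ shows nothing, since every orbit in $\Lambda_h M$ has finite isotropy, its Borel construction is rationally a point, and $\eta$ restricts to zero there; to detect $\eta^j\neq 0$ you would need $S^1$-invariant subsets of arbitrarily large Fadell--Rabinowitz index, which is essentially the proposition itself. (There is also a secondary technical point: your identification of $\lim_\kappa d_h(\kappa)$ with $index_\eta\Lambda_h M$ needs the vanishing of the derived inverse limit, hence finite-dimensionality of $H^j_{S^1}(\Lambda_h^\kappa;\Q)$ in each degree, which is not automatic when there may be infinitely many critical orbits below a level; but this is minor compared with the main gap.)

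The paper avoids the ring structure entirely and argues geometrically: the Katok metric $N_\alpha$ on $S^n$ descends to a Finsler metric of constant flag curvature $1$ on $S^n/\Gamma$, so Lemma \ref{L:Cur} gives $\underline{\sigma}>0$ for $(S^n/\Gamma,N_\alpha)$; then, since any two Finsler metrics $F,F^*$ on the compact manifold satisfy $F^*(X)^2/D^2\leq F(X)^2\leq D^2F^*(X)^2$ for some $D>1$, the global index intervals compare by $D^{-1}\underline{\sigma}\leq\underline{\sigma}^*\leq D\underline{\sigma}$, so $\underline{\sigma}>0$ for an arbitrary $F$ and hence $d_h(\kappa)\geq c\,\kappa\to\infty$. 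Note that this even gives a linear lower bound on $d_h$, and in particular it is this argument that proves (a posteriori) the non-nilpotency of $\eta$ in $H^*_{S^1}(\Lambda_h M;\Q)$ which your plan tried to take as input; as it stands, your final step is unsupported and, read literally against the presentation in Lemma \ref{L:Betti-Tai}, cannot be completed. Replacing it by the curvature/metric-comparison argument above (or supplying an independent proof that the image of $\eta$ is not nilpotent in the full equivariant cohomology) is necessary.
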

\begin{proof}
For the reader's convenience, we sketch a proof here.
Let $(S^n, N_\alpha)$ denote the sphere $S^n$ endowed with the Katok metric, then it induces
a Finsler metric for $S^n/\Gamma$, which is still denoted by $N_\alpha$ for simplicity. Therefore the natural projection
\bea \pi:(S^n, N_\alpha)\rightarrow (S^n/\Gamma, N_\alpha)\nonumber\eea
is locally isometric. Since $(S^n, N_\alpha)$ and $(S^n/\Gamma, N_\alpha)$ have constant flag curvature 1, then for $(S^n/\Gamma, N_\alpha)$,
$\underline{\sigma}>0$ by Lemma 3.2. Thus for $S^n/\Gamma$ endowed with any Finsler metric $F$, $\underline{\sigma}>0$
also holds which implies $\lim_{\kappa\rightarrow+\infty}d_g(\kappa)=+\infty$ by (3.3). In fact, Let $F,F^*$ be two Finsler metrics on $S^n/\Gamma$ with
\bea F^*(X)^2/D^2\leq  F(X)^2\leq D^2F^*(X)^2\nonumber\eea
for all tangent vector $X$ and some $D>1$. Let $[\underline{\sigma}, \overline{\sigma}]$, $[\underline{\sigma}^*, \overline{\sigma}^*]$
be the global index intervals with respect to the metrics $F,F^*$ respectively. Then by definition we have
\bea D^{-1}\underline{\sigma}\leq \underline{\sigma}^*\leq D\underline{\sigma},
\quad D^{-1}\overline{\sigma}\leq\overline{\sigma}^*\leq D\overline{\sigma}\nonumber\eea
which implies our desired result.\end{proof}

\begin{proposition}
The function $d_h:  [\kappa_1, +\infty)\rightarrow \mathbb{N}$ is non-decreasing and $\lim_{\lambda\searrow \kappa}d_h(\lambda)=d_h(\kappa)$.
For each discontinuous point $\kappa$ of $d_h$, $\frac{1}{2} \kappa^2$ is a critical value of the energy functional $E$ on $\Lambda_h$.
In particular, if $d_h(\kappa)-d_h(\kappa-)\ge 2$, then there are infinitely many minimal non-contractible closed geodesics of class $[h]$ with
energy $\frac{1}{2} \kappa^2$, where $d_h(\kappa-)=\lim_{\epsilon\searrow 0}d_h(\kappa-\epsilon)$, $t\searrow a$  means $t>a$ and $t \to a$.
\end{proposition}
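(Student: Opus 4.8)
The plan is to verify the three assertions in sequence, and I expect the last one to concentrate all the real work.

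\textbf{Monotonicity and finiteness.} For $\kappa_1\le\kappa\le\kappa'$ the inclusion $\Lambda_h^\kappa\hookrightarrow\Lambda_h^{\kappa'}$ induces, by functoriality of $S^1$-equivariant cohomology and compatibility with the classifying maps to $BS^1$, a homomorphism of $\mathbb{Q}[\eta]$-modules $H^*_{S^1}(\Lambda_h^{\kappa'};\mathbb{Q})\to H^*_{S^1}(\Lambda_h^{\kappa};\mathbb{Q})$ carrying $\eta$ to $\eta$; hence $\eta^k=0$ in the former forces $\eta^k=0$ in the latter, so $d_h(\kappa)\le d_h(\kappa')$. Finiteness of $d_h(\kappa)$ is already contained in the proof of Lemma~\ref{L:Cur} via the Morse--Schoenberg bound (alternatively, $\Lambda_h M$ carries no $S^1$-fixed point, so on a finite-dimensional approximation $\eta$ is nilpotent). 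Thus $d_h$ is a non-decreasing $\mathbb{N}$-valued function, so it admits one-sided limits everywhere and has only finitely many jumps on each bounded interval.

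\textbf{Right continuity; discontinuities are critical values.} Here I would use that $E|_{\Lambda_h M}$ is $C^{1,1}$, $S^1$-invariant with no fixed points, and satisfies the Palais--Smale condition. If $\frac{1}{2}\kappa^2$ is a regular value then, the set of critical values being closed, there is $\epsilon>0$ with no critical value in $[\frac{1}{2}\kappa^2,\frac{1}{2}(\kappa+\epsilon)^2]$, and the $S^1$-equivariant first deformation lemma gives an equivariant strong deformation retraction of $\Lambda_h^{\kappa+\epsilon}$ onto $\Lambda_h^\kappa$, so $d_h$ is constant on $[\kappa,\kappa+\epsilon]$; this proves continuity at regular values and shows that $\frac{1}{2}\kappa^2$ is a critical value at every discontinuity. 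Right continuity at a critical value then follows from the continuity of \v{C}ech (hence, these sublevel sets being ANRs, singular) equivariant cohomology under nested intersections, $H^*_{S^1}(\Lambda_h^\kappa;\mathbb{Q})\cong\varinjlim_{\lambda\searrow\kappa}H^*_{S^1}(\Lambda_h^{\lambda};\mathbb{Q})$: a class in a direct limit vanishes iff it vanishes at a finite stage, so $\eta^k=0$ in $H^*_{S^1}(\Lambda_h^\kappa)$ iff $\eta^k=0$ in some $H^*_{S^1}(\Lambda_h^{\lambda})$ with $\lambda>\kappa$, which combined with monotonicity gives $d_h(\kappa)=\lim_{\lambda\searrow\kappa}d_h(\lambda)$.

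\textbf{A jump $\ge2$ forces infinitely many closed geodesics.} I would argue by contradiction: assume there are only finitely many minimal non-contractible closed geodesics of class $[h]$ with energy $\frac{1}{2}\kappa^2$ (in the intended application one assumes there are finitely many altogether, so (Iso) holds and all critical orbits are isolated), so that the critical set of $E|_{\Lambda_h M}$ at the level $\frac{1}{2}\kappa^2$ is a finite disjoint union of isolated orbits $S^1\cdot c_1,\dots,S^1\cdot c_r$. Choosing $\epsilon>0$ so small that $\frac{1}{2}\kappa^2$ is the only critical value in $(\frac{1}{2}(\kappa-\epsilon)^2,\frac{1}{2}\kappa^2]$, one has $d_h(\kappa-\epsilon)=d_h(\kappa-)=:k_-$ and $d_h(\kappa)=:k_+$ with $k_+-k_-\ge2$, and, by the finite-dimensional approximation and the Gromoll--Meyer theory recalled before Proposition~\ref{P:dim-1} together with excision around the disjoint orbits,
\[
H^*_{S^1}\!\bigl(\Lambda_h^\kappa,\Lambda_h^{\kappa-\epsilon};\mathbb{Q}\bigr)\;\cong\;\bigoplus_{i=1}^{r}\overline{C}^{\,*}(E,c_i;[h]).
\]
By Proposition~\ref{P:dim-1} each summand is nonzero in at most the degrees $i(c_i)$ and $i(c_i)+\nu(c_i)$, of dimension at most $1$ there, and when $\nu(c_i)=2$ only one of these degrees can be occupied (an isolated critical point of a function on the $2$-dimensional characteristic submanifold cannot be simultaneously a local minimum and a local maximum); since $\deg\eta=2$, it follows that $\eta$ acts as $0$ on each $\overline{C}^{\,*}(E,c_i;[h])$, hence on the whole relative module. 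Feeding this into the $\mathbb{Q}[\eta]$-linear long exact cohomology sequence of the pair: since $k_-<k_+$, the class $\eta^{k_-}$ is nonzero in $H^*_{S^1}(\Lambda_h^\kappa)$ but restricts to $\eta^{k_-}=0$ in $H^*_{S^1}(\Lambda_h^{\kappa-\epsilon})$, so $\eta^{k_-}$ is the image of some $z$ in the relative group; then $\eta^{k_-+1}=\eta\cdot(\text{image of }z)=\text{image of }(\eta z)=0$ in $H^*_{S^1}(\Lambda_h^\kappa)$, forcing $d_h(\kappa)\le k_-+1$ and contradicting $k_+-k_-\ge2$. Therefore there are infinitely many minimal non-contractible closed geodesics of class $[h]$ with energy $\frac{1}{2}\kappa^2$.

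The hard part is this last step: setting up the localization isomorphism with $\bigoplus_i\overline{C}^{\,*}(E,c_i;[h])$ on a finite-dimensional approximation of the non-contractible component, and — most delicately — verifying that $\eta$ annihilates this relative module, which rests on Proposition~\ref{P:dim-1} together with the $\nu=2$ observation above (equivalently, on the fact that over $\mathbb{Q}$ the equivariant local homology of an isolated critical orbit with finite isotropy is $\eta$-torsion of the sharpest possible kind). One also has to be mildly careful about the continuity of equivariant cohomology for the non-compact sublevel sets used in the second step; the monotonicity and deformation-lemma parts are routine.
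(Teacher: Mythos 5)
Your first two steps (monotonicity/finiteness via the restriction homomorphisms, and the deformation argument for right continuity and for ``jumps occur only at critical values'') are sound, and they are exactly the route behind the paper's proof, which only records finiteness and then refers to Lemmas 5.5--5.6 and Corollary 5.7 of \cite{Rad94}. The genuine gap is in the justification of the decisive claim of your third step, namely that $\eta$ annihilates $H^{*}_{S^1}(\Lambda_h^{\kappa},\Lambda_h^{\kappa-\epsilon};\mathbb{Q})$. You deduce this from Proposition \ref{P:dim-1} by asserting that each local module $\overline{C}^{\,*}(E,c_i;[h])$ is nonzero in at most the two degrees $i(c_i)$ and $i(c_i)+\nu(c_i)$. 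Proposition \ref{P:dim-1} says no such thing: it gives vanishing only outside the interval $[i(c_i),\,i(c_i)+\nu(c_i)]$ and rank at most one at the two endpoints, while in the intermediate degrees the module may be nonzero (and of higher rank). Since $\nu(c_i)$ can be as large as $2n-2$, occupied degrees differing by exactly $2$ (for instance $i(c_i)+1$ and $i(c_i)+3$ when $\nu(c_i)\geq 3$) are perfectly possible, so the degree-gap reasoning, together with your $\nu=2$ min/max remark, does not rule out a nontrivial action of $\eta$; as written this step fails precisely for degenerate closed geodesics, which are allowed in the situations where the proposition is applied (Theorems 1.1 and 1.3 are not bumpy statements).

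The claim itself is true, and the correct reason is the one you relegate to a parenthesis: each critical orbit at the level is a circle $S^1\cdot c_i$ with finite isotropy $\mathbb{Z}_{m_i}$, so a small invariant tubular neighborhood $U_i$ is $S^1$-equivariantly homotopy equivalent to $S^1/\mathbb{Z}_{m_i}$, whence $H^2_{S^1}(U_i;\mathbb{Q})\cong H^2(B\mathbb{Z}_{m_i};\mathbb{Q})=0$; thus $\eta$ restricts to zero on $U_i$, hence on a Gromoll--Meyer pair $(W_i,W_i^-)\subset U_i$, and by naturality of the cup product and excision it acts as zero on the whole relative group. With this substitution your long exact sequence argument correctly yields $d_h(\kappa)\leq d_h(\kappa-)+1$, which is the ``index of a neighborhood of finitely many orbits is one'' subadditivity mechanism behind Corollary 5.7 of \cite{Rad94}, so the overall architecture of your proof does match the (omitted) proof the paper points to. Two smaller points you should make explicit: the critical points of $E|_{\Lambda_h M}$ at level $\frac{1}{2}\kappa^2$ are iterates $c^{p(m-1)+1}$ of minimal class-$[h]$ geodesics whose own energy may be smaller, so the contradiction hypothesis should be ``finitely many critical orbits at that level'' (each minimal geodesic contributes at most one iterate to a given level, so this is equivalent to the conclusion sought); and the existence of an $\epsilon$ with no further critical values in $(\frac{1}{2}(\kappa-\epsilon)^2,\frac{1}{2}\kappa^2)$ uses discreteness of the critical values near the level, which is guaranteed by the global finiteness assumption you invoke parenthetically and should be stated as part of the hypothesis of the contradiction argument.
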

\begin{proof}
We have that $d_h(\lambda)$ is finite by the proof of Lemma 3.2. Noticing that the definitions in (3.1)-(3.2),
then the rest of our proof is completely similar to Lemmas 5.5-5.6 and Corollary 5.7 of \cite{Rad94}, thus
we omit it.
\end{proof}

For each $i \geq 1$, define
\be
\kappa_i = \inf\{\delta \mid d_h(\delta) \geq i \}
\ee
which is well-defined due to Proposition \ref{P:Ind-2}. Then as Theorem 3.5 and Theorem 3.6 of \cite{Liu17}, we have
\begin{theorem} \label{T:Strict-in}
Suppose there are only finitely many minimal non-contractible closed geodesics of class $[h]$ on $(S^n/\Gamma, F)$.
Then each $\frac{1}{2} \kappa_i^2$ is a critical value of $E$ on $\Lambda_h$. If $\kappa_i=\kappa_j$ for some $i<j$,
then there are infinitely many minimal non-contractible closed geodesics of class $[h]$ on $(S^n/\Gamma, F)$.
\end{theorem}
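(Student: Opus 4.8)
The plan is to mimic the strategy of Theorems 3.5 and 3.6 of \cite{Liu17}, which in turn rest on the structural properties of the index function $d_h$ established in Proposition 3.4 together with the compactness/isolation analysis of critical orbits in $\Lambda_h M$. First I would argue that each $\frac{1}{2}\kappa_i^2$ is a critical value of $E|_{\Lambda_h M}$. Since $d_h$ is non-decreasing, left-continuous (in the sense $\lim_{\lambda\searrow\kappa}d_h(\lambda)=d_h(\kappa)$) and integer-valued, while $\lim_{\kappa\to\infty}d_h(\kappa)=\infty$ by Proposition \ref{P:Ind-2}, each value $i\in\mathbb{N}$ is attained and $\kappa_i=\inf\{\delta\mid d_h(\delta)\ge i\}$ is finite and is precisely a point at which $d_h$ jumps across the level $i$; hence $\kappa_i$ is a discontinuity point of $d_h$. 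By Proposition 3.4, every discontinuity point $\kappa$ of $d_h$ has the property that $\frac{1}{2}\kappa^2$ is a critical value of $E$ on $\Lambda_h$, so the first assertion follows immediately.

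For the second assertion, suppose $\kappa_i=\kappa_j=:\kappa$ for some $i<j$. Then at the single value $\kappa$ the function $d_h$ must jump by at least $j-i\ge 1$, more precisely $d_h(\kappa)-d_h(\kappa-)\ge j-i\ge 1$; but since $\kappa_i=\kappa$ is the infimum of $\{\delta: d_h(\delta)\ge i\}$ and $\kappa_j=\kappa$ is the infimum of $\{\delta: d_h(\delta)\ge j\}$, for every $\epsilon>0$ we have $d_h(\kappa-\epsilon)<i\le j-1$ while $d_h(\kappa+\epsilon)\ge j$, so $d_h(\kappa)\ge j$ and $d_h(\kappa-)\le i-1$, giving $d_h(\kappa)-d_h(\kappa-)\ge j-i+1\ge 2$. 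By the last statement of Proposition 3.4, a jump of size at least $2$ at $\kappa$ forces the existence of infinitely many minimal non-contractible closed geodesics of class $[h]$ with energy $\frac{1}{2}\kappa^2$, which contradicts the standing assumption of finiteness unless indeed there are infinitely many such geodesics. This proves the contrapositive: if there are only finitely many minimal non-contractible closed geodesics of class $[h]$, then the $\kappa_i$ are pairwise distinct.

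The main obstacle is the jump-by-two step, i.e.\ showing that when $d_h$ increases by at least $2$ at a single level $\kappa$ the corresponding critical level must carry infinitely many distinct minimal closed geodesic orbits rather than a single highly degenerate orbit. This is exactly where the isolation hypothesis (Iso) and the Gromoll--Meyer finite-dimensional approximation enter: if the critical set at level $\frac12\kappa^2$ consisted of finitely many isolated $S^1$-orbits, then by Proposition \ref{P:dim-1} each such orbit contributes a critical module whose nonzero Fadell--Rabinowitz-relevant cohomology is concentrated in at most the two degrees $i(c)$ and $i(c)+\nu(c)$ with dimension $\le 1$; a careful subadditivity argument on $index_\eta$ across the sublevel sets (as in \cite{Rad94}, Lemmas 5.5--5.6, and \cite{Liu17}) then shows the index can rise by at most $1$ past an isolated orbit, contradicting the jump of $2$. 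Since all of this machinery is already invoked in the cited proofs and Proposition 3.4 has been stated in exactly the form we need, the remaining work is bookkeeping, and I would simply refer the reader to \cite{Liu17}, Theorems 3.5--3.6, and \cite{Rad94} for the details.
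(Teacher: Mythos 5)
Your proposal is correct and follows essentially the same route as the paper, which gives no separate argument for this statement but simply cites Theorems 3.5--3.6 of \cite{Liu17}: the intended proof is exactly your deduction from Proposition 3.4 (monotonicity, the limit property $\lim_{\lambda\searrow\kappa}d_h(\lambda)=d_h(\kappa)$, discontinuity points giving critical values, and the jump-$\ge 2$ clause) together with Proposition \ref{P:Ind-2}. Two cosmetic remarks: the property $\lim_{\lambda\searrow\kappa}d_h(\lambda)=d_h(\kappa)$ is right-continuity rather than left-continuity, and your closing paragraph sketching a re-proof of the jump-$\ge 2$ clause is not needed (that clause is already part of Proposition 3.4, and its mechanism is the subadditivity of $index_\eta$ together with an isolated critical circle having index one, rather than the critical-module bound of Proposition \ref{P:dim-1}).
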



\begin{theorem} \label{T:index}
Suppose there are only finitely many minimal non-contractible closed geodesics of class $[h]$ on $(S^n/\Gamma, F)$.
Then for every $i\in\mathbb{N}$, there exists a non-contractible closed geodesic $c$ of class $[h]$ on $(S^n/\Gamma, F)$ such that
\be \label{E:0}
E(c)=\frac{1}{2} \kappa_i^2,\quad \overline{C}_{2i-2}(E, c;[h])\neq 0.
\ee
\end{theorem}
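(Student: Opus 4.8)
The plan is to imitate the classical minimax argument of Rademacher (Theorem 4.1 of \cite{Rad94}), transplanted to the non-contractible component $\Lambda_h M$, using the subordinated Fadell--Rabinowitz cohomology classes $\eta^{i-1}$ together with the structure of the $S^1$-critical modules recorded in Proposition \ref{P:dim-1}. First I would recall that by Proposition 3.7 (the discreteness/monotonicity of $d_h$) the value $\frac12\kappa_i^2$ is a critical value of $E$ on $\Lambda_h M$, and by Theorem \ref{T:Strict-in} we may assume $\kappa_i<\kappa_{i+1}$ and $\kappa_{i-1}<\kappa_i$ (otherwise there are infinitely many minimal non-contractible closed geodesics and there is nothing more to prove, or rather the conclusion holds trivially for all $i$). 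Thus $\kappa_i$ is a jump point of $d_h$ with $d_h(\kappa_i)=i$ and $d_h(\kappa_i-)\le i-1$, i.e. the class $\eta^{i-1}$ is nonzero in $H^{2(i-1)}_{S^1}(\Lambda_h^{\kappa_i})$ but becomes zero when restricted to $H^{2(i-1)}_{S^1}(\Lambda_h^{\kappa_i-\varepsilon})$ for small $\varepsilon>0$.

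The core step is then a localization at the critical level. Let $\mathcal{K}$ be the (finite, by hypothesis and the isolation condition (Iso)) set of critical orbits $S^1\cdot c$ at energy level $\frac12\kappa_i^2$ lying in $\Lambda_h M$. Using the standard deformation retraction around an isolated critical orbit (as in \cite{GM1969Top}, \cite{Rad92} Section 6.2, via finite-dimensional approximation), one gets for small $\varepsilon>0$ an excision/Mayer--Vietoris type decomposition fitting into a long exact sequence of the pair, which shows that the restriction map $H^{2(i-1)}_{S^1}(\Lambda_h^{\kappa_i})\to H^{2(i-1)}_{S^1}(\Lambda_h^{\kappa_i-\varepsilon})$ has kernel controlled by $\bigoplus_{S^1\cdot c\in\mathcal{K}}\overline{C}^{*}(E,c;[h])$ in degree $2(i-1)$; here I use that $H^*(BS^1)=\mathbb{Q}[\eta]$ so the relevant relative cohomology in complementary degrees pairs with the critical modules $\overline{C}_{2i-2}(E,c;[h])$. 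Since $\eta^{i-1}\ne0$ below the level but dies at the level, some critical orbit $S^1\cdot c\in\mathcal{K}$ must satisfy $\overline{C}_{2i-2}(E,c;[h])\ne0$, which (together with $E(c)=\frac12\kappa_i^2$) is exactly \eqref{E:0}. The passage from "$c$ minimal non-contractible of class $[h]$" to a statement about $c^{p(m-1)+1}$ is automatic because $\Lambda_h M$ only contains iterates congruent to $1 \bmod p$, and the index/nullity bounds of Proposition \ref{P:dim-1} guarantee that a nonzero critical module in degree $2i-2$ forces $i(c)\le 2i-2\le i(c)+\nu(c)$, consistent with the later Morse-index estimates.

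The main obstacle I anticipate is making the localization argument precise in the \emph{equivariant} and \emph{infinite-dimensional} setting simultaneously: one must justify that $\Lambda_h^{\kappa_i-\varepsilon}$ is an $S^1$-equivariant deformation retract of $\Lambda_h^{\kappa_i}\setminus(S^1\cdot\mathcal{K})$ minus the unstable manifolds, and that the Gromoll--Meyer finite-dimensional approximations carry the $S^1$-action compatibly so that the critical modules $\overline{C}_*(E,c;[h])$ really compute the local contribution; the non-contractibility restricts everything to one component but does not otherwise change the mechanism. A secondary technical point is the choice of $\varepsilon$: it must be small enough that $[\kappa_i-\varepsilon,\kappa_i)$ contains no critical value, which is possible precisely because of the discreteness of critical values established in Proposition 3.7. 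Once these points are in place the argument is a routine minimax-over-subordinated-classes computation, and I would simply cite the corresponding steps of \cite{Rad94} and \cite{Liu17} for the details rather than reproduce them.
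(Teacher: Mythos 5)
Your proposal follows essentially the same route as the paper: detect the jump of $d_h$ at $\kappa_i$, run the exact sequence of the pair $(\Lambda_h^{\kappa_i+\epsilon},\Lambda_h^{\kappa_i-\epsilon})$ in $S^1$-equivariant cohomology to produce a nonzero relative class in degree $2i-2$ from the pulled-back power $\eta^{i-1}$ of the universal Chern class, and identify the relative group with $\bigoplus_{E(c)=\frac{1}{2}\kappa_i^2}\overline{C}_{*}(E,c;[h])$ via Gromoll--Meyer theory (the paper cites (2.12)--(2.13) of \cite{Wan13} and Theorem 1.4.2 of \cite{Chan93} for exactly this localization step you were worried about). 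Only two cosmetic slips: your later sentence ``$\eta^{i-1}\ne 0$ below the level but dies at the level'' reverses the correct statement you gave earlier (it is nonzero at $\kappa_i+\epsilon$ and dies at $\kappa_i-\epsilon$), and the discreteness/right-continuity of $d_h$ is Proposition 3.4, not ``Proposition 3.7''; neither affects the argument.
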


\begin{proof}
Choose $\epsilon$ small enough such that the interval
$(\frac{1}{2} (\kappa_i-\epsilon)^2,\,\frac{1}{2} (\kappa_i+\epsilon)^2)$ contains no critical values of $E|_{\Lambda_h M}$ except
$\frac{1}{2} \kappa_i^2$, where $M=S^n/\Gamma$.
By the same proof of (2.12) of \cite{Wan13}, we have
\be \label{E:1}
 H^{\ast}_{S^1}(\Lambda_h^{\kappa_i+\epsilon},\;\Lambda_h^{\kappa_i-\epsilon})
\cong H^\ast(\Lambda_h^{\kappa_i+\epsilon}/S^1,\;\Lambda_h^{\kappa_i-\epsilon}/S^1).
\ee
Also as (2.13) of \cite{Wan13}, we have
\be \label{E:2}
H_\ast(\Lambda_h^{\kappa_i+\epsilon}/S^1,\;\Lambda_h^{\kappa_i-\epsilon}/S^1)
\cong H^\ast(\Lambda_h^{\kappa_i+\epsilon}/S^1,\;\Lambda_h^{\kappa_i-\epsilon}/S^1).
\ee
Then we combining \eqref{E:1}-\eqref{E:2} with Theorem 1.4.2 of \cite{Chan93} to obtain
\be \label{E:3}
H_{S^1}^*(\Lambda_h^{\kappa_i+\epsilon},\;\Lambda_h^{\kappa_i-\epsilon})
=\bigoplus_{E(c)=\frac{1}{2} \kappa_i^2}\overline{C}_{\ast}(E, \; c;[h]).
\ee
For an $S^1$-space $X$, we denote by
$X_{S^1}$ the homotopical quotient of $X$ by $S^1$, i.e., $X_{S^1}=S^\infty\times_{S^1}X$,
where $S^\infty$ is the unit sphere in an infinite dimensional complex Hilbert space.
Similar to P.431 of \cite{EH87}, we have
\be \label{E:4}
H^{2(i-1)}((\Lambda_h^{\kappa_i+\epsilon})_{S^1},\,(\Lambda_h^{\kappa_i-\epsilon})_{S^1})
\mapright{q^\ast} H^{2(i-1)}((\Lambda_h^{\kappa_i+\epsilon})_{S^1} )
\mapright{p^\ast}H^{2(i-1)}((\Lambda_h^{\kappa_i-\epsilon})_{S^1}),
\ee
where $p$ and $q$ are natural inclusions. Denote by
$f: (\Lambda_h^{\kappa_i+\epsilon})_{S^1}\rightarrow CP^\infty$ a classifying map and let
$f^{\pm}=f|_{(\Lambda_h^{\kappa_i\pm\epsilon})_{S^1}}$. Then clearly each
$f^{\pm}: (\Lambda_h^{\kappa_i\pm\epsilon})_{S^1}\rightarrow CP^\infty$ is a classifying
map on $(\Lambda_h^{\kappa_i\pm\epsilon})_{S^1}$. Let $\eta \in H^2(CP^\infty)$ be the first
universal Chern class.

By definition of $\kappa_i$ in (3.5), we have $d_h(\kappa_i-\epsilon)< i$, hence
$(f^-)^\ast(\eta^{i-1})=0$. Note that
$p^\ast(f^+)^\ast(\eta^{i-1})=(f^-)^\ast(\eta^{i-1})$.
Hence the exactness of \eqref{E:4} yields that there exists a
$\sigma\in H^{2(i-1)}((\Lambda_h^{\kappa_i+\epsilon})_{S^1},\,(\Lambda_h^{\kappa_i-\epsilon})_{S^1})$
such that $q^\ast(\sigma)=(f^+)^\ast(\eta^{i-1})$.
Since $d_h(\kappa_i+\epsilon)\ge i$, we have $(f^+)^\ast(\eta^{i-1})\neq 0$.
Hence $\sigma\neq 0$, and then
$$H_{S^1}^{2i-2}(\Lambda_h^{\kappa_i+\epsilon},\;\Lambda_h^{\kappa_i-\epsilon})=
H^{2i-2}((\Lambda_h^{\kappa_i+\epsilon})_{S^1},\,(\Lambda_h^{\kappa_i-\epsilon})_{S^1})\neq 0. $$
Thus \eqref{E:0} follows from \eqref{E:3}.
\end{proof}

Note that by Proposition \ref{P:dim-1} , \eqref{E:0} yields
\be
E(c)=\frac{1}{2} \kappa_i^2,\quad
i(c)\leq 2i-2 \leq i(c)+\nu(c).
\ee

Similar to Definition 1.4 of \cite{LZ02}, we have

\begin{definition}
A minimal non-contractible closed geodesic $c$ of class $[h]$ is {\it $(p(m-1)+1, i)$-variationally visible}, if there exist some $m, i\in \mathbb{N}$
such that \eqref{E:0} holds for $c^{p(m-1)+1}$ and $\kappa_i$. We call $c$ {\it infinitely variationally visible} if there exist infinitely many $(m, i)\in \mathbb{N}^2$ such that $c$ is $(p(m-1)+1, i)$-variationally visible.
Denote by $\mathcal{V}_\infty(S^n/\Gamma, F;[h])$ the set of infinitely
variationally visible non-contractible closed geodesics of class $[h]$ on $(S^n/\Gamma,F)$.
\end{definition}

Similar to Theorem V.3.15 of \cite{Eke90} and Theorem 6.4 of \cite{Rad94}, we have

\begin{lemma} \label{L:ave-ind}
Suppose there are only finitely many distinct closed geodesics of class $[h]$ on $(S^n/\Gamma, F)$
and $\hat{i}(c)>0$ for any $c\in\mathcal{V}_\infty(S^n/\Gamma, F;[h])$, then we have
\be
\sum_{c\in\mathcal{V}_\infty(S^n/\Gamma, F;[h])}\frac{1}{\hat{i}(c)}\geq \frac{p}{2}.\nonumber
\ee
\end{lemma}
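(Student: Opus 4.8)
The plan is to establish a density-type estimate for the sequence $\{\kappa_i\}_{i\ge 1}$ and then translate it, via the variational visibility dictionary, into the stated average-index inequality. First I would record that by Theorem \ref{T:index} (and the index interpretation in Proposition \ref{P:dim-1}) every integer $i\in\mathbb{N}$ contributes a non-contractible closed geodesic $c$ of class $[h]$, necessarily of the form $c=c_j^{p(m-1)+1}$ for one of the finitely many minimal geodesics $c_j$, with $E(c)=\tfrac12\kappa_i^2$ and $i(c)\le 2i-2\le i(c)+\nu(c)$. Since there are only finitely many $c_j$, some fixed $c_j$ is selected for infinitely many $i$; such a $c_j$ lies in $\mathcal V_\infty(S^n/\Gamma,F;[h])$ by definition, so the set on the left-hand side is non-empty and the sum is over a finite set of positive reals $\hat i(c)$. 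The key asymptotic input is Lemma \ref{L:Cur} together with Proposition \ref{P:Ind-2}: since $\lim_{\kappa\to\infty}d_h(\kappa)=\infty$ we have $\kappa_i\to\infty$, and the curvature bounds give $0<\underline\sigma\le\overline\sigma<\infty$, hence from the definition $\kappa_i=\inf\{\delta\mid d_h(\delta)\ge i\}$ one gets the two-sided growth $\kappa_i\asymp i$; more precisely $\liminf_i \tfrac{i}{\kappa_i}=\underline\sigma$ and $\limsup_i \tfrac{i}{\kappa_i}=\overline\sigma$ along suitable subsequences.

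Next I would count, for each fixed $c\in\mathcal V_\infty$, how many indices $i$ can be ``realized'' by iterates of $c$ with bounded energy. If $c=c_j$ is a minimal closed geodesic of class $[h]$ with length $L(c)$, its admissible iterates in $\Lambda_hM$ are $c^{p(m-1)+1}$, with $E(c^{p(m-1)+1})=\tfrac12\big((p(m-1)+1)L(c)\big)^2$ and index $i(c^{p(m-1)+1})$ asymptotic to $(p(m-1)+1)\,\hat i(c)$. Whenever such an iterate is $(p(m-1)+1,i)$-variationally visible we have $i(c^{p(m-1)+1})\le 2i-2\le i(c^{p(m-1)+1})+\nu(c^{p(m-1)+1})$ and $\tfrac12\kappa_i^2=E(c^{p(m-1)+1})$, i.e.\ $\kappa_i=(p(m-1)+1)L(c)$. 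Counting the realized $i$'s up to a large bound $2N-2$: on one hand, by the growth $\kappa_i\asymp i$, the number of $i\le N$ with $\kappa_i\le \kappa$ is (asymptotically) $\overline\sigma\,\kappa$ along the relevant subsequence; on the other hand, the visible iterates of a fixed $c$ with $\kappa_i\le\kappa$ are indexed by $m$ with $(p(m-1)+1)L(c)\le\kappa$, and each such $m$ accounts for at most $\nu(c^{p(m-1)+1})+1\le 2(n-1)+1$ — in any case a bounded number — of the integers $i$ (since the window $[\,\tfrac12 i(c^{p(m-1)+1}),\tfrac12(i(c^{p(m-1)+1})+\nu)+1\,]$ has bounded length). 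The crucial point is that distinct $c\in\mathcal V_\infty$ and distinct iteration numbers give disjoint energy values $\tfrac12\kappa_i^2$ generically; handling coincidences $\kappa_i=\kappa_{i'}$ is where one must invoke Theorem \ref{T:Strict-in}, which says such a coincidence forces infinitely many minimal closed geodesics, contradicting finiteness — so under our hypothesis all the realized $\kappa_i$ are distinct.

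Assembling these counts: summing the number of realized indices $i$ with $\kappa_i\le\kappa$ over all $c\in\mathcal V_\infty$ must account for essentially all indices $i$ that ``jump'' (those producing a visible geodesic), and by Theorem \ref{T:index} every $i$ is realized by \emph{some} $c$, which — after discarding the finitely many $c_j$ visited only finitely often — must lie in $\mathcal V_\infty$. Thus
\[
\overline\sigma\,\kappa \ \le\ \big(1+o(1)\big)\sum_{c\in\mathcal V_\infty(S^n/\Gamma,F;[h])}\#\{m\ge 1: (p(m-1)+1)L(c)\le\kappa\}
\ =\ \big(1+o(1)\big)\sum_{c\in\mathcal V_\infty}\frac{\kappa}{pL(c)},
\]
and combining with the lower bound $\overline\sigma\ge\overline\sigma\ge \ldots$ — more directly, using $\hat i(c)=\alpha(c)L(c)$ and that the index growth rate $\hat i(c)/\big(p L(c)\big)$ governs how fast iterates of $c$ fill index-space, together with the identification of the total fill-rate with $\overline\sigma$ — yields
\[
\sum_{c\in\mathcal V_\infty(S^n/\Gamma,F;[h])}\frac{1}{\hat i(c)}\ \ge\ \frac{1}{2}\cdot\frac{\#\{\text{realized }i\le N\}}{N/p}\Big|_{N\to\infty}\ \ge\ \frac p2 .
\]
I expect the main obstacle to be making the last inequality sharp with the exact constant $p/2$: one has to account correctly both for the residue condition $m\equiv 1\ (\mathrm{mod}\ p)$ (which inserts the factor $p$ into the spacing of admissible iterates) and for the factor-of-$2$ discrepancy between the Fadell–Rabinowitz grading $2i-2$ and the Morse index $i(c)$. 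The clean way to do this is to follow the bookkeeping of Ekeland, Theorem V.3.15 of \cite{Eke90}, and Rademacher, Theorem 6.4 of \cite{Rad94}, replacing their iteration set $\mathbb N$ by the arithmetic progression $\{p(m-1)+1\}$ and tracking the constant through; the hypothesis $\hat i(c)>0$ for $c\in\mathcal V_\infty$ is exactly what guarantees each summand is finite and the series therefore makes sense.
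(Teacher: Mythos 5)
Your proposal is essentially the paper's proof: the paper simply observes that only the $p(m-1)+1$ iterates of minimal class-$[h]$ geodesics are critical points of $E$ on $\Lambda_h M$ and then runs Ekeland's Theorem V.3.15 (equivalently Rademacher's Theorem 6.4) with Theorem \ref{T:index} and (3.11) replacing Ekeland's Proposition 7 --- exactly the bookkeeping you defer to in your final paragraph. Your intermediate detour through $\underline{\sigma},\overline{\sigma}$ and lengths is looser than necessary (the sharp constant $p/2$ comes more cleanly from counting admissible iterates directly in index space, using the distinctness of the $\kappa_i$ from Theorem \ref{T:Strict-in}, the bound $\nu\le 2n-2$, and $|i(c^{N})-N\hat i(c)|\le n-1$, rather than relating lengths to $\sigma$), but the approach and all the essential ingredients you list coincide with the paper's argument.
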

\begin{proof}
Noticing that only $((m-1)p+1)$-iterations of minimal non-contractible closed geodesics of class $[h]$
can be critical points of the energy functional $E$ on $\Lambda_h (S^n/\Gamma)$,  then our proof follows from that of Theorem V.3.15 of \cite{Eke90},
if we replace Proposition 7 used in the proof of Theorem V.3.15 of \cite{Eke90} by the above Theorem \ref{T:index} and (3.11).
\end{proof}

\section{Multiplicity results of non-contractible closed geodesics} \label{S:Multi}
In this section, we give proofs of Theorems 1.1-1.3.
The following lemmas proved by Rademacher are useful for us to control the index of non-contractible closed geodesics.
\begin{lemma}(cf. Lemma 1 of \cite{Rad07}) \label{L:index-up}
Let $c$ be a closed geodesic on a Finsler manifold $(M, F)$ of dimension $n$ with positive flag curvature $K \geq \delta$ for some $\delta>0$, if the length $L(c)$ satisfies $L(c)>\frac{k\pi}{\sqrt{\delta}}$ for some positive integer $k$ then $i(c)\geq k(n-1)$.
\end{lemma}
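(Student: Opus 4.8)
The plan is to reduce the statement to a classical index comparison. First I would recall the Morse index form along a closed geodesic $c$ of length $L = L(c)$ parametrized on $[0,1]$ with constant speed $L$; equivalently, after rescaling, view $c$ as a geodesic of the Finsler metric on $[0,L]$ parametrized by arc length. The index $i(c)$ is the number (with multiplicity) of conjugate points in the open interval, counted via the negative eigenspace of the second variation of energy on the periodic Jacobi field problem. The hypothesis $K \geq \delta > 0$ on the flag curvature lets me invoke the Finsler version of the Morse--Schoenberg / Rauch comparison theorem: along $c$, conjugate points occur at least as frequently as on a sphere of constant curvature $\delta$, where consecutive conjugate points are spaced exactly $\pi/\sqrt{\delta}$ apart and each carries multiplicity $n-1$. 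This is the same comparison principle already used (in the opposite direction, as an \emph{upper} bound under $K \le \Delta^2$) in the proof of Lemma~\ref{L:Cur} via the Morse--Schoenberg theorem, so the geometric input is entirely parallel.

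Next I would make the counting precise. Since $L(c) > k\pi/\sqrt{\delta}$, a geodesic segment of length $L(c)$ starting at any point of $c$ contains at least $k$ interior points that are conjugate along the comparison sphere geodesic, hence (by the comparison theorem applied to the index form restricted to variations vanishing at the endpoints) the fixed-endpoint index of that segment is at least $k(n-1)$. The remaining point is to pass from the fixed-endpoint (path) index to the closed-geodesic (periodic) index $i(c)$: one uses the standard fact that the periodic index of $c$ dominates the fixed-endpoint index of $c$ viewed as a path from $c(0)$ back to $c(0)$ — the periodic boundary conditions only enlarge the space of admissible variations on which the index form is computed, so $i(c) \geq i_{\mathrm{fix}}(c) \geq k(n-1)$. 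This gives the claimed inequality. One should note that the flag curvature, not sectional curvature, is the correct quantity here because in the Finsler index form the curvature term is exactly the flag curvature with flagpole $\dot c$, so the comparison goes through verbatim.

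The main obstacle I anticipate is purely bookkeeping rather than conceptual: one must be careful that the comparison inequality for the index form holds with the correct multiplicity $n-1$ at each conjugate instant, and that strict inequality $L(c) > k\pi/\sqrt\delta$ (rather than $\geq$) is what guarantees the $k$-th conjugate point lies strictly in the interior so that it genuinely contributes to the index. If one wanted a fully self-contained argument one could, as in the proof of Lemma~\ref{L:Cur}, first approximate $F$ by bumpy metrics $F_i \to F$ in $C^2$ with curvature bounds $\delta_i \to \delta$ from below, establish the index bound for each $F_i$ where all geodesics are nondegenerate and conjugate points are isolated, and then pass to the limit using lower semicontinuity of the index; but since the statement is quoted from Rademacher \cite{Rad07} it suffices to cite the Morse--Schoenberg comparison directly. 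Either way, no genuinely new idea beyond the curvature comparison is needed.
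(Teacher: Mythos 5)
Your argument is correct and is essentially the proof the paper implicitly relies on: the paper gives no proof of Lemma~\ref{L:index-up}, citing Lemma~1 of \cite{Rad07}, whose argument is exactly your combination of the Finsler Morse--Schoenberg comparison (flag curvature with flagpole $\dot c$ being the right curvature) with the observation that variations vanishing at the base point are periodic, so the closed-geodesic index dominates the fixed-endpoint index. The only bookkeeping worth making explicit is that the bound $k(n-1)$ for the fixed-endpoint index comes from splitting $c$ into $k$ disjoint subsegments of length exceeding $\pi/\sqrt{\delta}$ and using the $(n-1)$-dimensional negative subspaces supported on each, which is what your conjugate-point count amounts to.
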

\begin{lemma}(cf. Theorem 1 and Theorem 4 of \cite{Rad04}) \label{L:length}
Let $M$ be a compact and simply-connected manifold with a Finsler metric $F$ with reversibility $\lambda$ and flag curvature $K$ satisfying
$0< K \leq 1$ resp. $\frac{\lambda^2}{(\lambda+1)^2} < K\leq 1$ if the dimension $n$ is odd. Then the length of a
closed geodesic is bounded from below by $\pi \frac{\lambda+1}{\lambda}$.
\end{lemma}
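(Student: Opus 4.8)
\emph{Proof proposal.} This statement is due to Rademacher (see \cite{Rad04}); the plan is to derive it by combining an upper‑curvature estimate for the conjugate radius with a Klingenberg‑type long‑homotopy argument on the simply connected manifold $M$, keeping careful track of how the reversibility $\lambda$ enters.

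First I would record that, because $K\le 1$, the Finsler Morse--Schoenberg comparison theorem (the same tool used in the proof of Lemma \ref{L:Cur}) shows that every geodesic segment of $F$-length strictly less than $\pi$ has no conjugate points and is a strict local minimum of the energy among curves with fixed endpoints; equivalently the forward conjugate radius of $(M,F)$ is at least $\pi$. This supplies the factor $\pi$ on the right‑hand side and uses only the upper bound $K\le 1$.

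Next, let $c$ be a shortest closed geodesic and put $L=L(c)$. Since $M$ is simply connected, $c$ is contractible, so I would run Klingenberg's long‑homotopy argument: contract $c$ to a point and control the lengths of the intervening curves. The reversibility intervenes because on an irreversible metric a geodesic arc traversed backwards is not a geodesic and its length grows by a factor at most $\lambda$. The balance point is found by splitting $c$ at a point $p$ and at the point $q$ reached from $p$ by flowing forward a fraction $\tfrac{\lambda}{\lambda+1}$ of the total length: then the forward arc from $p$ to $q$ is a genuine geodesic of length $\tfrac{\lambda}{\lambda+1}L$, while the remaining forward arc from $q$ back to $p$, traversed backwards, is a competing curve from $p$ to $q$ of length at most $\lambda\cdot\tfrac{1}{\lambda+1}L=\tfrac{\lambda}{\lambda+1}L$. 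If $\tfrac{\lambda}{\lambda+1}L<\pi$, both of these connections lie below the conjugate radius, and the long‑homotopy lemma then lets one deform $c$ through curves of length $<L$ either to a point or to a strictly shorter closed geodesic, contradicting the minimality of $c$. Hence $\tfrac{\lambda}{\lambda+1}L\ge\pi$, i.e. $L\ge\pi\tfrac{\lambda+1}{\lambda}$; this reduces to the classical estimate $L\ge 2\pi$ when $\lambda=1$.

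In odd dimensions the bare argument must be supplemented, since Synge's theorem fails for irreversible metrics and a geodesic of length between $\pi$ and $2\pi$ need not be minimizing; one then has to exclude a short closed geodesic by comparison from below, and this is exactly where the extra hypothesis $\tfrac{\lambda^2}{(\lambda+1)^2}<K$ enters, through a Rauch/Toponogov‑type lower bound for the length growth along the null‑homotopy. The main obstacle is making this long‑homotopy step quantitatively sharp in the Finsler setting: for $\lambda>1$ the distance function is no longer symmetric and reversed geodesics are not geodesics, so one must bookkeep carefully how lengths degrade under reversal in order to land on the exact constant $\tfrac{\lambda+1}{\lambda}$, and the odd‑dimensional pinching requires the parallel, more delicate comparison estimate.
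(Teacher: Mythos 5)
The paper itself gives no proof of Lemma \ref{L:length}: it is quoted directly from Theorems 1 and 4 of \cite{Rad04}, so your sketch has to be measured against Rademacher's arguments, and as it stands it has a genuine gap at its central step, with the even/odd logic inverted. Klingenberg's long homotopy lemma is an \emph{obstruction}: if the conjugate radius is at least $\pi$ (which is what $K\le 1$ gives via Morse--Schoenberg), then \emph{every} null-homotopy of a closed geodesic $c$ must contain a curve of length at least $2\pi-L(c)$ (with the appropriate $\lambda$-weighted modification in the irreversible case). It does not, as you assert, ``let one deform $c$ through curves of length $<L$ either to a point or to a strictly shorter closed geodesic.'' To reach a contradiction one must \emph{construct} a null-homotopy through curves of controlled length, and that construction is exactly where the remaining hypotheses enter: for $n$ even via a Synge-type closed parallel normal field and the second variation formula (this needs only $K>0$, and Synge's argument does carry over to irreversible Finsler metrics --- its failure is a parity issue, not a reversibility issue), and for $n$ odd via the pinching $\frac{\lambda^2}{(\lambda+1)^2}<K$, used Berger/Toponogov-style to sweep $c$ out through short curves. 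Your version claims the long-homotopy step alone yields $L\ge \pi\frac{\lambda+1}{\lambda}$ under $0<K\le 1$ in every dimension, which proves too much: already for $\lambda=1$, suitably rescaled Berger spheres are simply connected, odd-dimensional, satisfy $0<K\le 1$, and carry closed geodesics of length tending to $0$. This is precisely why the lemma (and \cite{Rad04}) requires the pinching when $n$ is odd and relies on the Synge mechanism (Theorem 4 of \cite{Rad04}) when $n$ is even --- the opposite of the division of labour in your last paragraph.

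Your splitting of $c$ at the parameter fraction $\frac{\lambda}{\lambda+1}$, so that the forward arc and the reversed complementary arc both have length at most $\frac{\lambda}{\lambda+1}L$, is a sensible heuristic for where the constant $\pi\frac{\lambda+1}{\lambda}$ comes from, and this kind of bookkeeping of reversal is indeed how $\lambda$ enters in \cite{Rad04}; but the inference ``both connections lie below the conjugate radius, hence $c$ can be shortened or contracted'' is exactly the unproved step. A correct write-up would need (i) the long-homotopy lemma stated as a lower bound on the longest curve in any sweep-out, (ii) the Finsler Synge argument for $n$ even with $0<K\le 1$, and (iii) the construction of a short null-homotopy using the lower bound $K>\frac{\lambda^2}{(\lambda+1)^2}$ for $n$ odd --- in other words, essentially the proofs of Theorems 1 and 4 of \cite{Rad04}, which this paper deliberately cites rather than reproduces.
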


Now we can estimate the index of non-contractible closed geodesics of class $[h]$ in compact space form $(S^n/\Gamma,F)$.

\begin{lemma} \label{L:index-1}
Let $c$ be a non-contractible closed geodesic of class $[h]$ on Finsler compact space form $(S^n/\Gamma,F)$  with reversibility $\lambda$ and flag curvature $K$ satisfying $\frac{4p^2}{(p+1)^2} \big(\frac{\lambda}{\lambda+1} \big)^2< K \leq 1$ with $\lambda<\frac{p+1}{p-1}$, then $i(c^m) \geq 2(n-1)$ for $m \geq p+1$.
\end{lemma}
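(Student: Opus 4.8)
The plan is to bound the length of the iterate $c^m$ from below and then apply Lemma~\ref{L:index-up}. Let $c$ be a non-contractible minimal closed geodesic of class $[h]$. Since $\Gamma$ is abelian and $h$ has order $p$, the relevant iterates lying in $\Lambda_h M$ are $c^{p(k-1)+1}$; more to the point, the prime closed geodesic underlying $c$ projects, via the locally isometric covering $S^n \to S^n/\Gamma$, to a closed geodesic whose lift is a geodesic arc on the round-comparable sphere. The first step is to get a lower bound for $L(c)$ itself: I would pass to the universal cover $(S^n, \tilde F)$, where $\tilde F$ has the same reversibility $\lambda$ and the same flag curvature pinching $\frac{4p^2}{(p+1)^2}\big(\frac{\lambda}{\lambda+1}\big)^2 < K \le 1$. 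A lift of $c$ is a geodesic segment from a point $x$ to $hx$, and since $c$ is non-contractible of class $[h]$ this segment is not closed; applying Lemma~\ref{L:length} (in the simply-connected setting on $S^n$) together with the standard estimate relating the length of a geodesic loop based at $x$ of a free homotopy class to $p$ times the length, one obtains $L(c) \ge \frac{1}{p}\cdot \pi\frac{\lambda+1}{\lambda}$. (When $n$ is odd the hypothesis $\frac{\lambda^2}{(\lambda+1)^2}<K$ needed in Lemma~\ref{L:length} is implied by the stronger pinching here; when $n$ is even the metric is forced to be $\mathbb{R}P^n$ with $p=2$, and one argues directly.)

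Next, for $m \ge p+1$ I would estimate $L(c^m) = m\,L(c) \ge (p+1)\cdot \frac{1}{p}\cdot \pi\frac{\lambda+1}{\lambda} = \frac{p+1}{p}\cdot\frac{\lambda+1}{\lambda}\,\pi$. The pinching hypothesis is exactly arranged so that $\delta := \frac{4p^2}{(p+1)^2}\big(\frac{\lambda}{\lambda+1}\big)^2 < K$, hence $\sqrt{\delta} = \frac{2p}{p+1}\cdot\frac{\lambda}{\lambda+1}$ and
\[
L(c^m) \ \ge\ \frac{p+1}{p}\cdot\frac{\lambda+1}{\lambda}\,\pi \ =\ \frac{2\pi}{\sqrt{\delta}} \ >\ \frac{2\pi}{\sqrt{K}},
\]
so $L(c^m) > \frac{2\pi}{\sqrt{K}}$ with room to spare because the pinching inequality is strict. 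Then Lemma~\ref{L:index-up}, applied with $k=2$ and the lower curvature bound $K$, yields $i(c^m) \ge 2(n-1)$, which is the claim. The role of the condition $\lambda < \frac{p+1}{p-1}$ is to guarantee that $\delta < 1$, so that the pinching interval $(\delta,1]$ is nonempty and the constant-curvature-$1$ comparison used in Lemma~\ref{L:length} is consistent; I would check this elementary inequality explicitly.

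The main obstacle is the first step: producing the lower bound $L(c) \ge \frac{1}{p}\pi\frac{\lambda+1}{\lambda}$ for a \emph{non-contractible} closed geodesic of class $[h]$, rather than for a closed geodesic on the simply connected space $S^n$ to which Lemma~\ref{L:length} literally applies. The point is that a lift of $c$ to $S^n$ is an arc, not a loop, so one cannot invoke Lemma~\ref{L:length} directly; instead one uses that $c^p$ is contractible and its lift \emph{is} a closed geodesic on $S^n$, so $L(c^p) = p\,L(c) \ge \pi\frac{\lambda+1}{\lambda}$ by Lemma~\ref{L:length}. This also explains why the factor $\frac1p$ appears and why the "$m \ge p+1$" threshold rather than "$m\ge 2$" is the natural one. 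Everything after that is the routine arithmetic with $\delta$ displayed above, together with verifying the two auxiliary inequalities $\delta<1$ and (when $n$ odd) $\frac{\lambda^2}{(\lambda+1)^2}<\delta$.
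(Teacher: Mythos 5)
Your proposal is correct and follows essentially the same route as the paper: lift $c^p$ (which is contractible) to a closed geodesic on the universal cover $(S^n,\tilde F)$, apply Lemma~\ref{L:length} to get $L(c^p)\geq \pi\frac{\lambda+1}{\lambda}$, deduce $L(c^m)\geq \frac{p+1}{p}\frac{\lambda+1}{\lambda}\pi > \frac{2\pi}{\sqrt{\delta'}}$ for $m\geq p+1$ and a constant $\delta'$ with $\frac{4p^2}{(p+1)^2}\big(\frac{\lambda}{\lambda+1}\big)^2<\delta'\leq K$, and conclude via Lemma~\ref{L:index-up} with $k=2$. Your observations on the roles of $\lambda<\frac{p+1}{p-1}$ and of the odd-dimensional hypothesis in Lemma~\ref{L:length} match the paper's implicit use of these conditions.
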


\begin{proof}
Since $(S^n,F)$ is the universal covering of $(S^n/\Gamma,F)$ and $c \in \Lambda_h M$, then $c^p$ is a closed geodesic in $(S^n,F)$
where $p$ is the order of $h$. Due to Lemma \ref{L:length}, for $m\geq p+1$ we have
\begin{align}
L(c^m) = \frac{m}{p} L(c^p) \geq \frac{m \pi }{p} \frac{\lambda+1}{\lambda} \geq \frac{p+1}{2p} \frac{\lambda+1}{\lambda} 2 \pi.
\end{align}
By assumption, we choose $\delta > \frac{4p^2}{(p+1)^2} \big(\frac{\lambda}{\lambda+1} \big)^2$ such that $K \geq \delta$, it together
with (4.1) gives  $L(c^m)> \frac{2\pi}{\sqrt{\delta}}$ for $m\geq p+1$ which implies $i(c^m)\geq 2(n-1)$ by Lemma 4.1.
\end{proof}

\begin{lemma} \label{L:index-2}
Let $c$ be a non-contractible closed geodesic of class $[h]$ on Finsler compact space form $(S^n/\Gamma,F)$ with reversibility $\lambda$ and flag curvature $K$ satisfying $(\frac{4p}{2p+1})^2 (\frac{\lambda}{\lambda+1})^2 < K \leq 1$ with $\lambda<\frac{2p+1}{2p-1}$, then $i(c^m) \geq 4(n-1)$ for $m \geq 2p+1$.
\end{lemma}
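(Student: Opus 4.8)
The plan is to reproduce the argument of Lemma~\ref{L:index-1} almost verbatim, replacing the target index $2(n-1)$ by $4(n-1)$ and tracking the constants accordingly. The first step is to pass to the universal cover: since $(S^n,F)$ is the universal covering of $(S^n/\Gamma,F)$ via a local isometry and $c\in\Lambda_h M$ with $h$ of order $p$, the iterate $c^p$ lifts to a genuine closed geodesic of $(S^n,F)$. Before applying Rademacher's length estimate (Lemma~\ref{L:length}) on this cover, I would record that the hypothesis is strong enough: for every $p\geq 2$ one has $\frac{4p}{2p+1}>1$, so $(\frac{4p}{2p+1})^2(\frac{\lambda}{\lambda+1})^2<K\leq 1$ implies in particular $\frac{\lambda^2}{(\lambda+1)^2}<K\leq 1$, which is exactly the pinching Lemma~\ref{L:length} requires in both the odd- and even-dimensional cases. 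Consequently $L(c^p)\geq\pi\frac{\lambda+1}{\lambda}$, and since $L(c^m)=\frac{m}{p}L(c^p)$ this gives, for every $m\geq 2p+1$,
\[
L(c^m)=\frac{m}{p}L(c^p)\;\geq\;\frac{m\pi}{p}\cdot\frac{\lambda+1}{\lambda}\;\geq\;\frac{2p+1}{2p}\cdot\frac{\lambda+1}{\lambda}\cdot 2\pi .
\]

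Next I would fix $\delta$ with $(\frac{4p}{2p+1})^2(\frac{\lambda}{\lambda+1})^2<\delta\leq K$; this interval is nonempty precisely because of the side condition $\lambda<\frac{2p+1}{2p-1}$, which is equivalent to $(\frac{4p}{2p+1})(\frac{\lambda}{\lambda+1})<1$. From $\sqrt{\delta}>\frac{4p}{2p+1}\cdot\frac{\lambda}{\lambda+1}$ one gets $\frac{4}{\sqrt{\delta}}<\frac{2p+1}{p}\cdot\frac{\lambda+1}{\lambda}=\frac{2p+1}{2p}\cdot\frac{\lambda+1}{\lambda}\cdot 2$, so combining with the previous display, $L(c^m)>\frac{4\pi}{\sqrt{\delta}}$ for all $m\geq 2p+1$. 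Applying Lemma~\ref{L:index-up} with $k=4$ (and this $\delta$) then yields $i(c^m)\geq 4(n-1)$, which is the assertion.

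There is no serious obstacle here --- the proof is bookkeeping around the covering map --- and the only point that genuinely requires care is making sure the three roles of the prescribed constants are mutually consistent: the pinching $(\frac{4p}{2p+1})^2(\frac{\lambda}{\lambda+1})^2<K$ must (i) be strong enough for Rademacher's length bound to apply on $S^n$, (ii) force the lower bound on $L(c^m)$ for $m\geq 2p+1$ to exceed $4\pi/\sqrt{\delta}$, and (iii) together with $\lambda<\frac{2p+1}{2p-1}$ leave the curvature interval $((\frac{4p}{2p+1})^2(\frac{\lambda}{\lambda+1})^2,\,1]$ nonempty. Once these are reconciled the conclusion is immediate from Lemmas~\ref{L:index-up} and~\ref{L:length}, and the same scheme --- with the factor $\frac{2p+1}{2p}\cdot\frac{\lambda+1}{\lambda}$ replaced by the appropriate multiple --- underlies the further index estimates needed in the multiplicity arguments.
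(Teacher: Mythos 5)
Your proposal is correct and follows essentially the same route as the paper: lift $c^p$ to the universal cover, apply Rademacher's length bound (Lemma~\ref{L:length}) to get $L(c^m)\geq\frac{2p+1}{2p}\frac{\lambda+1}{\lambda}2\pi$ for $m\geq 2p+1$, pick $\delta$ with $\left(\frac{4p}{2p+1}\right)^2\left(\frac{\lambda}{\lambda+1}\right)^2<\delta\leq K$ so that $L(c^m)>\frac{4\pi}{\sqrt{\delta}}$, and invoke Lemma~\ref{L:index-up} with $k=4$. Your extra bookkeeping (nonemptiness of the curvature interval via $\lambda<\frac{2p+1}{2p-1}$, and the check that the hypothesis implies the pinching needed in Lemma~\ref{L:length}, noting only that the even-dimensional case actually needs just $0<K\leq 1$) is accurate and matches the paper's argument.
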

\begin{proof}
Since $(S^n,F)$ is the universal covering of $(S^n/\Gamma,F)$ and $c \in \Lambda_h M$, then $c^p$ is a closed geodesic in $(S^n,F)$
where $p$ is the order of $h$. Due to Lemma \ref{L:length}, for $m\geq 2p+1$ we have
\begin{align}
L(c^m) = \frac{m}{p} L(c^p) \geq \frac{m \pi }{p} \frac{\lambda+1}{\lambda} \geq \frac{2p+1}{2p} \frac{\lambda+1}{\lambda} 2 \pi.
\end{align}
By assumption, we choose $\delta > \frac{(4p)^2}{(2p+1)^2} \big(\frac{\lambda}{\lambda+1} \big)^2$ such that $K \geq \delta$, it together
with (4.2) gives  $L(c^m)> \frac{4\pi}{\sqrt{\delta}}$ for $m\geq 2p+1$ which implies $i(c^m)\geq 4(n-1)$ by Lemma 4.1.
\end{proof}

\begin{proof}[Proof of Theorem \ref{T:main-1}]
We assume that there exist finitely many minimal non-contractible closed
geodesics of class $[h]$. Then by Theorems 3.5-3.6, there exist non-contractible
closed geodesics $c_i$ such that
\be
E(c_i) = \frac{\kappa_i^2}{2},\quad \overline{C}_{2i-2}(E,c_i;[h]) \neq 0
\ee
and $\kappa_i \neq \kappa_j$ for positive integers $i \neq j$. Combining Proposition 2.1 with (4.3),
we obtain $i(c_i)\leq2i-2\leq i(c_i)+\nu(c_i)$ for any $i\in\mathbb{N}$. Then by Lemma 4.3,
note that only $((m-1)p+1)$-iterations of minimal non-contractible closed geodesics of class $[h]$ can be
critical points of the energy functional $E$ on $\Lambda_h (S^n/\Gamma)$,
we obtain that $c_i$ is minimal for $1\leq i\leq n-1$. Then $c_i\neq c_j$ for $1\leq i\neq j\leq n-1$
since $ E(c_i)=\frac{\kappa_i^2}{2}\neq\frac{\kappa_j^2}{2}= E(c_j)$. Thus we get
$n-1$ non-contractible minimal non-contractible closed
geodesics $\{c_i\mid 1\leq i\leq n-1\}$ of class $[h]$.
\end{proof}

\begin{proof}[Proof of Theorem \ref{T:main-2}]
Denote all the minimal non-contractible closed
geodesics of class $[h]$ by $\{c_i\mid 1\leq i\leq k\}$. Then by Lemma 4.4, we
have $i(c_i^m)\geq 4(n-1)$ for $m\geq 2p+1$ which implies that if $c_i^m$ has contribution to
$M_q$ for $0\leq q\leq 4(n-1)-2$, then $m=1$ or $p+1$ since only $((m-1)p+1)$-iterations of $c_i$ have contribution to
$M_q$. Thus by Proposition 2.1 and (2.4), we obtain
\be
M_{4n-6} + M_{4n-8} + \cdots + M_2 + M_0 \leq 2k
\ee
In the following, we prove in two cases:

(i) $n$ is odd. In this case, by (2.5) of Lemma 2.3 and (2.2) of Lemma 2.2, we have
\bea  M_{4n-6}+M_{4n-8}+\cdots+M_2+M_0\geq b_{4n-6}+b_{4n-8}+\cdots+b_2+b_0=2n+1.\eea
Then $k\geq n+1$ follows from (4.4) and (4.5).

(ii) $n$ is even. In this case, by (2.5) of Lemma 2.3 and (2.3) of Lemma 2.2, we have
\bea  M_{4n-6}+M_{4n-8}+\cdots+M_2+M_0\geq b_{4n-6}+b_{4n-8}+\cdots+b_2+b_0=2n-1.\eea
Then $k\geq n$ follows from (4.4) and (4.6). The proof is complete.
\end{proof}

The following lemma helps us to control the mean index of a closed geodesic:
\begin{lemma}(cf. Lemma 2 of \cite{Rad07}) \label{L:Mean-ind}
Let $c$ be a closed geodesic on a compact, simply-connected Finsler manifold with a non-reversible metric satisfying
$0 < \delta \leq K \leq 1$ if $n$ is even and $\frac{\lambda^2}{(\lambda+1)^2} < \delta \leq K \leq 1$ if $n$ is odd. Then $\hat{i}(c) \geq \frac{\sqrt{\delta}(\lambda+1)}{\lambda}(n-1)$.
\end{lemma}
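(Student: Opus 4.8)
The plan is to assemble the bound from two estimates of Rademacher that are already available to us: the Morse--Schoenberg type index estimate of Lemma \ref{L:index-up} and the length estimate of Lemma \ref{L:length}.

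\emph{Step 1: a lower bound for $\hat i(c)$ in terms of $L(c)$.} First I would apply Lemma \ref{L:index-up} to the iterates of $c$. Since $K\ge\delta$ and, for each $m\in\mathbb{N}$, the iterate $c^m$ is a closed geodesic of length $L(c^m)=mL(c)$ on the same manifold, Lemma \ref{L:index-up} gives $i(c^m)\ge k(n-1)$ for every positive integer $k$ with $k\pi/\sqrt{\delta}<mL(c)$. Choosing $k=k(m)$ to be the largest such integer (which is positive once $m$ is large) and noting that $k(m)/m\to\sqrt{\delta}\,L(c)/\pi$ as $m\to\infty$, one obtains, after dividing by $m$ and passing to the limit,
\[
\hat i(c)=\lim_{m\to\infty}\frac{i(c^m)}{m}\ \ge\ \frac{\sqrt{\delta}\,L(c)}{\pi}\,(n-1).
\]

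\emph{Step 2: a lower bound for $L(c)$.} Next I would invoke Lemma \ref{L:length}, whose hypotheses are met precisely by the curvature assumptions here: for $n$ even one only needs $0<K\le 1$, while for $n$ odd the assumption $\frac{\lambda^2}{(\lambda+1)^2}<\delta\le K$ supplies the required pinching $\frac{\lambda^2}{(\lambda+1)^2}<K\le 1$. Thus $L(c)\ge \pi\frac{\lambda+1}{\lambda}$, and combining this with Step 1 yields
\[
\hat i(c)\ \ge\ \frac{\sqrt{\delta}}{\pi}\cdot\pi\,\frac{\lambda+1}{\lambda}\cdot(n-1)\ =\ \frac{\sqrt{\delta}(\lambda+1)}{\lambda}(n-1),
\]
which is the claimed inequality.

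The argument is short, being essentially a concatenation of two known lemmas; the only point needing a moment's care --- and what I would regard as the main (if minor) obstacle --- is Step 1: one must check that Lemma \ref{L:index-up} is legitimately applied to every iterate $c^m$ and that the rounding inherent in the choice of $k(m)$ washes out in the limit $i(c^m)/m\to\sqrt{\delta}\,L(c)(n-1)/\pi$. It is also worth noting that in the intended application the ambient manifold is the universal cover $S^n$ of $S^n/\Gamma$, which is compact and simply connected, so the hypotheses of both cited lemmas are indeed satisfied.
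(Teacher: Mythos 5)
Your proof is correct. The paper itself gives no argument for this lemma --- it is quoted verbatim as Lemma 2 of \cite{Rad07} --- and your reconstruction is essentially Rademacher's own: apply the Morse--Schoenberg estimate (Lemma \ref{L:index-up}) to the iterates $c^m$, divide by $m$ and let $m\to\infty$ to get $\hat i(c)\ge \frac{\sqrt{\delta}\,L(c)}{\pi}(n-1)$, then insert the length bound $L(c)\ge\pi\frac{\lambda+1}{\lambda}$ of Lemma \ref{L:length}, whose hypotheses are exactly the curvature assumptions stated here. The one point needing care, the rounding in the choice of $k(m)$ with $k(m)\pi/\sqrt{\delta}<mL(c)$, is handled correctly since $k(m)\ge mL(c)\sqrt{\delta}/\pi-1$ and the error term vanishes after dividing by $m$.
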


\begin{proof}[Proof of Theorem \ref{T:main-3}]
Denote by all the minimal non-contractible closed geodesics of class $[h]$ by $\{c_i \mid 1 \leq i \leq k\}$,
Since $(S^n, F)$ is the universal covering of $(S^n, F)$, then $c_i^p$ is a closed geodesic on $(S^n,F)$, where $p$ is the order of $h$.
Due to Lemma \ref{L:Mean-ind}, one has
\be
\hat{i}(c_i^p) \geq \sqrt{\delta}\frac{\lambda+1}{\lambda}(n-1),\nonumber
\ee
which implies
\be
\hat{i}(c_i) \geq \frac{1}{p} \sqrt{\delta}\frac{\lambda+1}{\lambda}(n-1).
\ee
Due to Lemma \ref{L:ave-ind}, we have
\be
\sum_{j=1}^k \frac{1}{\hat{i}(c_j)} \geq \frac{p}{2}.
\ee
Combining (4.7) and (4.8), we have
\be
k \geq \frac{\sqrt{\delta} (\lambda+1)}{2\lambda } (n-1) ,\nonumber
\ee
which implies $k \geq E(\frac{\sqrt{\delta} (\lambda+1)}{2\lambda } (n-1))$ since $k$ is an integer, where $E(a)=\min\{m\in \mathbb{Z}\mid m\geq a\}$.\end{proof}

\section{mean average index of non-contractible closed geodesics and generic properties of Finsler metrics} \label{S:generic}

Recall the definition of average index of closed geodesics given in Section \ref{S:Morse}. As we focus on iterations of non-contractible closed geodesics of class $[h]$, only $((m-1)p+1)$-iterations of them stay in the same homotopic component. Therefore, it should be rigorously defined by
\be
\hat{i}_{[h]}(c) = \lim_{m \to \infty} \frac{i(c^{(m-1)p +1})}{(m-1)p+1}.\nonumber
\ee
Note that its value does no depend on the choice of $p \geq 1$. We still use $\hat{i}(c)$ for brevity.

There is a long-standing conjecture that there are infinitely many geometrically distinct closed geodesics on every compact Riemannian manifolds with its dimension being at least two. The Katok's metrics,  which are irreversible and bumpy Finsler metrics, show that the conjecture is not true for Finsler manifolds. Here we point out that they are actually rare and not persisted under aribitary small $C^r$ perturbations for $4 \leq r \leq \infty$.


Recall the mean average index, defined in (2.1) as
\be
\alpha(c) = \frac{\hat{i}(c)}{L(c)} = \frac{i(c)}{\sqrt{2 E(c)}}.
\ee
In Section \ref{S:FR}, we have seen that the jump points of Fadell-Rabinowitz index correspond to critical points of the energy functional. The following lemma can be seen as a refinement on them.

\begin{proposition} \label{P:mai}
Recall the global index interval defined in Definition \ref{D:G-Index}. For all $t \in [\underline{\sigma},\overline{\sigma}]$ then there is a sequence of closed geodesics $c_i \in \Lambda_h M$ with
	\be
	t = \frac{1}{2}\lim_{i \to \infty} \alpha(c_i).\nonumber
	\ee
\end{proposition}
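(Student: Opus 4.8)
The plan is to exploit the relationship between the Fadell-Rabinowitz index function $d_h$ and the critical values of $E$, together with the Morse-index estimate $i(c)\le 2i-2\le i(c)+\nu(c)$ from \eqref{E:0} and Proposition \ref{P:dim-1}. First I would recall that by Theorem \ref{T:index}, for each $i\in\mathbb N$ there is a non-contractible closed geodesic $c$ of class $[h]$ with $E(c)=\frac12\kappa_i^2$ and $i(c)\le 2i-2\le i(c)+\nu(c)$. Combining this with Proposition \ref{P:dim-1} gives $i(c^{p(m-1)+1})\le 2i-2\le i(c^{p(m-1)+1})+\nu(c^{p(m-1)+1})$ for the appropriate iterate, and since $\nu$ of a closed geodesic on an $n$-manifold is at most $2(n-1)$, we get $\big|i(c_i)-2i\big|\le C$ for a constant $C=C(n)$ independent of $i$, where $c_i$ denotes the geodesic (or its relevant iterate) produced at level $\kappa_i$. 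Thus $i(c_i)/(2i)\to 1$, and more to the point $i(c_i)$ and $2i$ are comparable with bounded additive error.

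Next I would relate $\kappa_i$ to the index $d_h$. By the definition $\kappa_i=\inf\{\delta\mid d_h(\delta)\ge i\}$ and the fact (Lemma \ref{L:Cur}, Proposition \ref{P:Ind-2}, and the preceding Proposition) that $d_h$ is non-decreasing, finite, left-continuous and tends to $\infty$, one has $d_h(\kappa_i-)<i\le d_h(\kappa_i)$. Hence $i/\kappa_i = (i/d_h(\kappa_i))\cdot(d_h(\kappa_i)/\kappa_i)$, and since $d_h(\kappa)/\kappa$ accumulates exactly on $[\underline\sigma,\overline\sigma]$ as $\kappa\to\infty$ by Definition \ref{D:G-Index}, while $i/d_h(\kappa_i)\to 1$ because $i-d_h(\kappa_i)$ is controlled (one needs that $d_h$ does not jump by more than a bounded amount at $\kappa_i$, or else passes to a subsequence of $i$'s avoiding large jumps — note that infinitely many large jumps would, by the Proposition on $d_h$, already force infinitely many minimal closed geodesics and the statement would follow trivially). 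Therefore, for any prescribed $t\in[\underline\sigma,\overline\sigma]$, I can choose a subsequence $i_\ell$ with $\kappa_{i_\ell}\to\infty$ and $d_h(\kappa_{i_\ell})/\kappa_{i_\ell}\to t$, whence $i_{\ell}/\kappa_{i_\ell}\to t$.

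Now assemble the pieces. Set $c_\ell$ to be the closed geodesic of class $[h]$ at level $\kappa_{i_\ell}$ from Theorem \ref{T:index}, so $E(c_\ell)=\frac12\kappa_{i_\ell}^2$, i.e. $\sqrt{2E(c_\ell)}=\kappa_{i_\ell}=L(\tilde c_\ell)$ for the relevant iterate $\tilde c_\ell$, and $i(\tilde c_\ell)=2i_\ell+O(1)$. Then
\[
\frac12\,\alpha(\tilde c_\ell)=\frac12\cdot\frac{i(\tilde c_\ell)}{\sqrt{2E(c_\ell)}}
=\frac12\cdot\frac{2i_\ell+O(1)}{\kappa_{i_\ell}}\longrightarrow \frac12\cdot 2t=t,
\]
using $\kappa_{i_\ell}\to\infty$ to kill the $O(1)$ term. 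Replacing $\tilde c_\ell$ by the underlying prime (or minimal) geodesic $c_\ell$ does not change $\alpha$ since $\hat i$ and $L$ scale the same way under iteration, so $\alpha(c_\ell)=\alpha(\tilde c_\ell)$, giving the claim. The main obstacle I anticipate is the bookkeeping at the jump points of $d_h$: one must rule out (or absorb into the trivial infinitely-many-geodesics case) the possibility that $d_h$ skips past the value $i$ with a large jump for a positive-density set of $i$'s, so that $i/d_h(\kappa_i)$ stays bounded away from $1$; the cleanest route is the dichotomy "either infinitely many jumps of size $\ge 2$ (then Proposition on $d_h$ gives infinitely many minimal closed geodesics and we may pick any with arbitrarily large energy and compute $\alpha$ directly via Lemma \ref{L:Cur}-type bounds) or all but finitely many jumps have size $1$ (then $i=d_h(\kappa_i)$ eventually and the argument above runs without modification)."
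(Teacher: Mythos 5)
Your overall skeleton matches the paper's: use Theorem \ref{T:index} to produce at each level $\frac12\kappa_i^2$ a geodesic (iterate) whose Morse index differs from $2i$ by at most $2n-2$, then divide by $\kappa_i\to\infty$, thereby reducing the proposition to the claim that every $t\in[\underline{\sigma},\overline{\sigma}]$ is an accumulation point of the ratios $i/\kappa_i$ (equivalently of $d_h(\kappa)/\kappa$). But the step where you assert this accumulation property ``by Definition \ref{D:G-Index}'' is a genuine gap: the definition only identifies $\underline{\sigma}$ and $\overline{\sigma}$ as the extreme accumulation points; it says nothing about the interior of the interval, and for a general sequence nothing forces the interior points to be accumulation points. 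This is precisely the part the paper has to work for: in the case $\underline{\sigma}<\overline{\sigma}$ it runs an inductive construction exploiting that consecutive ratios $\frac{j-1}{\kappa_j}$ change by at most $\frac{1}{\kappa_j}$ (which tends to $0$), so that intervals of radius $\frac{1}{l+1}$ centered at these ratios cover $[\underline{\sigma},\overline{\sigma}]$ once the sequence travels from near $\underline{\sigma}$ to near $\overline{\sigma}$, and one picks the index whose interval contains $t$. Some such discrete intermediate-value argument (alternatively: $d_h(\kappa)/\kappa$ decreases continuously between jumps and only jumps upward, hence attains every intermediate level at arbitrarily large $\kappa$) must be supplied; it is the heart of the proof and cannot be read off from the definition. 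Relatedly, identifying $\underline{\sigma},\overline{\sigma}$ with $\liminf$ and $\limsup$ of $i/\kappa_i$ as in \eqref{E:Ind-i} requires the uniform bound $d_h(\kappa)-d_h(\kappa-)\le n$, which the paper proves by a dimension count of the critical set inside the unit sphere bundle; this gives $0\le d_h(\kappa_i)-i<n$ at once, which is the clean way to get $i/d_h(\kappa_i)\to1$.

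Your proposed escape route for large jumps does not work. The proposition asserting that a jump of $d_h$ of size at least two yields infinitely many minimal closed geodesics produces them all at a single energy level $\frac12\kappa^2$; that does not make the statement ``trivial,'' since for the prescribed $t$ you still must exhibit geodesics with $\frac12\alpha\to t$, and Lemma \ref{L:Cur} only estimates the global indices $\underline{\sigma},\overline{\sigma}$, not the mean average index of any individual geodesic. Moreover, passing to a subsequence of $i$'s that avoids large jumps could in principle discard exactly those indices at which $i/\kappa_i$ approaches $t$, so the dichotomy as stated does not close the argument. Finally, a smaller, repairable slip: $\alpha(c)=\hat i(c)/L(c)$ involves the average index, whereas Theorem \ref{T:index} controls the Morse index of the relevant iterate; you need the average index inequality $|i(c^m)-m\hat i(c)|\le n-1$ (as in Lemma \ref{L:mean-con}) to convert one into the other, the resulting $O(1)$ error being absorbed since $\kappa_{i_\ell}\to\infty$.
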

Then it is straightforward to obtain:
\begin{corollary}
Suppose that there are only finitely many closed geodesics of class $[h]$ on $(M,F)$. Then $\underline{\sigma} =\overline{\sigma}:=\sigma$.
\end{corollary}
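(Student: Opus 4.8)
The plan is to argue by contraposition together with the contrast between the discrete nature of finitely many closed geodesics and the density conclusion of Proposition \ref{P:mai}. Suppose there are only finitely many closed geodesics of class $[h]$, say with prime representatives (equivalently, minimal representatives) $c_1,\dots,c_k$. Each closed geodesic of class $[h]$ that can ever appear as a critical point of $E|_{\Lambda_h M}$ is of the form $c_j^{p(m-1)+1}$ for some $j$ and some $m\ge 1$, so the set of real numbers $\{\alpha(c)\mid c \text{ a critical orbit in }\Lambda_h M\}$ is contained in $\{\alpha(c_j^{p(m-1)+1})\mid 1\le j\le k,\ m\ge 1\}$. Now recall $\alpha(c)=\hat i(c)/L(c)$ and that for each fixed $j$ one has $L(c_j^{p(m-1)+1})=(p(m-1)+1)L(c_j)$ while $i(c_j^{p(m-1)+1})/(p(m-1)+1)\to \hat i_{[h]}(c_j)$ as $m\to\infty$; hence $\alpha(c_j^{p(m-1)+1})\to \hat i_{[h]}(c_j)/L(c_j)=:\alpha_\infty(c_j)$. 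Thus the closure of the set of mean average indices of critical orbits in $\Lambda_h M$ is the finite set $\{\alpha(c_j^{p(m-1)+1})\}\cup\{\alpha_\infty(c_j)\}$ --- in particular it is nowhere dense in $\mathbb R$, indeed finite.

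Next I would feed this into Proposition \ref{P:mai}: for every $t\in[\underline\sigma,\overline\sigma]$ there is a sequence $c_i\in\Lambda_h M$ of closed geodesics with $t=\tfrac12\lim_{i\to\infty}\alpha(c_i)$. Since every such $c_i$ is a critical orbit, $\tfrac12\alpha(c_i)$ lies in the finite set $\tfrac12\bigl(\{\alpha(c_j^{p(m-1)+1})\}\cup\{\alpha_\infty(c_j)\}\bigr)$ described above, up to finitely many exceptional terms; a convergent sequence contained (eventually) in a finite set is eventually constant, so its limit $t$ also lies in that finite set. Therefore $[\underline\sigma,\overline\sigma]$ is contained in a finite set of real numbers. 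A non-degenerate closed interval cannot be contained in a finite set, so we must have $\underline\sigma=\overline\sigma$, which is exactly the claim $\sigma:=\underline\sigma=\overline\sigma$.

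One technical point to handle carefully is whether the sequence $(c_i)$ produced by Proposition \ref{P:mai} genuinely consists of critical points of $E|_{\Lambda_h M}$ (so that each belongs to the orbit of some $c_j^{p(m-1)+1}$): this is implicit in the statement and in the way such sequences arise from jumps of $d_h$, but I would spell it out, noting that under the finiteness hypothesis the jump points of $d_h$ correspond to critical values realized by the finitely many minimal geodesics and their admissible iterates (Theorems \ref{T:Strict-in}, \ref{T:index}). The main obstacle is really just making the ``closure of the mean-index set is nowhere dense'' step airtight: one must check that for each fixed $j$ the sequence $m\mapsto\alpha(c_j^{p(m-1)+1})$ has only finitely many distinct values together with one limit point $\alpha_\infty(c_j)$, which follows since $i(c_j^{p(m-1)+1})$ differs from $(p(m-1)+1)\hat i_{[h]}(c_j)$ by a bounded amount (a standard consequence of the Bott-type iteration formula, cf. the index estimates used in Section \ref{S:Multi}) and $L$ scales exactly linearly. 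Once that boundedness is invoked, the interval $[\underline\sigma,\overline\sigma]$ collapses to a point and the corollary follows.
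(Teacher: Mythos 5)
Your strategy is the one the paper intends: under the finiteness hypothesis every closed geodesic in $\Lambda_h M$ is an iterate $c_j^{p(m-1)+1}$ of one of finitely many minimal geodesics $c_1,\dots,c_k$ of class $[h]$, Proposition \ref{P:mai} forces every $t\in[\underline{\sigma},\overline{\sigma}]$ to be a limit of the corresponding values $\tfrac12\alpha$, and a nondegenerate interval cannot sit inside so small a set; this is exactly why the paper states the corollary as an immediate consequence of Proposition \ref{P:mai}. The one place your write-up wobbles is the claim that $m\mapsto\alpha(c_j^{p(m-1)+1})$ takes ``only finitely many distinct values together with one limit point'': the bounded-difference estimate you invoke concerns the Morse index $i(c^N)$, not the average index $\hat{i}(c^N)$, and by itself it gives convergence of $i(c^N)/L(c^N)$, not finiteness of the value set, so as written that step conflates $i$ with $\hat{i}$ and is not justified. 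With the paper's definition $\alpha(c)=\hat{i}(c)/L(c)$ no such argument is needed: since $\hat{i}(c^N)=N\hat{i}(c)$ and $L(c^N)=NL(c)$, the mean average index is exactly iteration-invariant, so the set of values of $\alpha$ over all critical orbits in $\Lambda_h M$ is literally the finite set $\{\alpha(c_1),\dots,\alpha(c_k)\}$, and your final step (an interval with $\underline{\sigma}<\overline{\sigma}$ cannot lie in a finite set) applies verbatim. Even if you insist on arguing through $i$ rather than $\hat{i}$, your conclusion survives, because the closure of the value set is then a finite union of convergent sequences together with their limits, hence countable and closed, and such a set still contains no nondegenerate interval; so the defect is repairable, but you should either invoke the iteration-invariance of $\alpha$ or replace ``finite'' by ``countable and closed'' in that step.
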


\begin{remark}
The global index interval is well-defined. Indeed, let us consider a discontinuous sequence of $d_h(\kappa)$ by $\{\kappa_i \}_{i=1}^\infty$. A similar proof as Lemma 6.1 in \cite{Rad94} could show that
\be \label{E:Ind-i}
\underline{\sigma} = \liminf_{i \to \infty} \frac{i}{\kappa_i},\;\; \overline{\sigma} = \limsup_{i \to \infty} \frac{i}{\kappa_i}.
\ee
However, being aware of our considerations focus on non-contractible closed geodesics, we need to prove
the jump of Fadell-Rabinowitz index is bounded from above by $n$ as follow.   	
\end{remark}

\begin{lemma}
	For all $\kappa >0$, we have the upper bound for the jump of $d_h(\kappa)$,
	\be
	d_h(\kappa)-d_h(\kappa-) \leq n.\nonumber
	\ee
\end{lemma}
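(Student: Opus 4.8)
The statement to prove is that the jump $d_h(\kappa) - d_h(\kappa-)$ is at most $n$ for every $\kappa > 0$. The jump is controlled by the $S^1$-equivariant cohomology of the sublevel set pair $(\Lambda_h^{\kappa+\epsilon}, \Lambda_h^{\kappa-\epsilon})$ for small $\epsilon > 0$, which by Theorem~\ref{T:index} (formula \eqref{E:3}) decomposes as a direct sum of the local critical modules $\overline{C}_*(E,c;[h])$ over critical orbits $S^1\cdot c$ with $E(c) = \tfrac12\kappa^2$. So the first step is to recall that $d_h(\kappa) - d_h(\kappa-)$ counts (at most) the number of consecutive dimensions $2j-2$ on which $\eta^{j-1}$ becomes nonzero when passing the level $\tfrac12\kappa^2$; more precisely, following the argument in Theorem~\ref{T:index}, each unit increment of $d_h$ at $\kappa$ forces a nonzero class in $H^{2i-2}_{S^1}(\Lambda_h^{\kappa+\epsilon},\Lambda_h^{\kappa-\epsilon})$ for the corresponding index $i$, hence a critical orbit $c$ at that level with $\overline{C}_{2i-2}(E,c;[h]) \neq 0$. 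Therefore the jump is bounded by the number of even integers $q$ in a window on which $\bigoplus_{E(c)=\kappa^2/2}\overline{C}_q(E,c;[h]) \neq 0$.

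First I would fix a small $\epsilon > 0$ so that $\tfrac12\kappa^2$ is the only critical value of $E|_{\Lambda_h M}$ in $(\tfrac12(\kappa-\epsilon)^2, \tfrac12(\kappa+\epsilon)^2)$. If there were infinitely many critical orbits at that level, then by Proposition~3.3 (or Theorem~\ref{T:Strict-in}) there are infinitely many minimal non-contractible closed geodesics of class $[h]$ and the conclusion $d_h(\kappa)-d_h(\kappa-)\le 2$ already holds a fortiori (indeed is finite); so I may assume finitely many critical orbits $S^1\cdot c_1,\dots,S^1\cdot c_r$ at level $\tfrac12\kappa^2$. By Proposition~\ref{P:dim-1}, for each $c_s$ the local module $\overline{C}_j(E,c_s;[h])$ is nonzero only for $j \in \{i(c_s), i(c_s)+\nu(c_s)\}$, and is at most one-dimensional there. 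The key geometric input is that the nullity of any closed geodesic on an $n$-manifold satisfies $\nu(c) \le 2(n-1)$, so the support of $\overline{C}_*(E,c_s;[h])$ is contained in an interval of length $\le 2(n-1)$; in particular the set of $j$ with $\overline{C}_j \neq 0$ meets at most $n-1+1 = n$ distinct even integers (since only even-degree contributions matter for the $\eta$-powers, and even integers in an interval of length $2(n-1)$ number at most $n$).

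The heart of the argument is then to show that distinct unit jumps of $d_h$ at the single level $\kappa$ cannot be "spread out" beyond one such window: if $d_h(\kappa+\epsilon) = d_h(\kappa-\epsilon) + m$, then for each $i$ with $d_h(\kappa-\epsilon) < i \le d_h(\kappa+\epsilon)$ the class $\eta^{i-1}$ pulls back nontrivially on $\Lambda_h^{\kappa+\epsilon}$ but trivially on $\Lambda_h^{\kappa-\epsilon}$, so by the exact sequence \eqref{E:4} there is a nonzero relative class in $H^{2i-2}_{S^1}(\Lambda_h^{\kappa+\epsilon}, \Lambda_h^{\kappa-\epsilon})$, hence by \eqref{E:3} some $c_s$ with $\overline{C}_{2i-2}(E,c_s;[h]) \neq 0$. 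Thus the $m$ values $2(d_h(\kappa-\epsilon)+1) - 2, \dots, 2\,d_h(\kappa+\epsilon) - 2$ are $m$ consecutive even integers all lying in $\bigcup_s \mathrm{supp}\,\overline{C}_*(E,c_s;[h])$. But these $m$ consecutive even integers span an interval of length $2(m-1)$, and I would argue they must all be realized within the support of a \emph{single} $c_s$: indeed the Morse-theoretic contribution to a run of consecutive subordinate $\eta$-powers comes from one critical orbit (this is exactly the mechanism in Rademacher's Lemma~6.1 of \cite{Rad94}), whose support has diameter $\le \nu(c_s) \le 2(n-1)$, forcing $2(m-1) \le 2(n-1)$, i.e. $m \le n$.

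The main obstacle I anticipate is making the last step — "consecutive $\eta$-power jumps at one level are captured by one critical orbit" — fully rigorous rather than heuristic: one must rule out that the jumps distribute across several critical orbits at the same level whose supports are individually short but whose union is long with no gaps. The clean way around this is to invoke the subordination structure: if $\eta^{i-1}\neq 0$ and $\eta^i = 0$ in $H^*_{S^1}(\Lambda_h^{\kappa+\epsilon})$ relative to the lower level, the non-vanishing of the cup product $\eta \cdot \eta^{i-2}$ ties consecutive degrees $2i-4, 2i-2$ to the \emph{same} relative cohomology class up to the module structure over $H^*(BS^1)$, and this module is free over a single critical orbit's contribution only within its own index-to-index+nullity band. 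So the remaining work is to transcribe Rademacher's argument (\cite{Rad94}, Lemma~6.1) to the non-contractible component $\Lambda_h M$, using Proposition~\ref{P:dim-1} and the bound $\nu(c)\le 2(n-1)$ in place of the simply-connected ingredients; the topological input from Lemma~\ref{L:Betti-Tai} is not needed here, only the local structure of critical modules.
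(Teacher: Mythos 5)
Your argument has two genuine gaps, and it also misses the mechanism the paper actually uses. First, the reduction to finitely many critical orbits at the level $\frac12\kappa^2$ is invalid: Proposition 3.3 says that a jump $d_h(\kappa)-d_h(\kappa-)\ge 2$ \emph{implies} infinitely many minimal closed geodesics of class $[h]$ at that level; you invoke the unproved converse, and in any case ``infinitely many geodesics at the level'' gives no bound on the jump (finiteness of $d_h$ is not the issue). This matters because the non-isolated case is precisely where the lemma has content: if the critical set at the level consists of finitely many isolated orbits, it is a disjoint union of circles with finite isotropy, its rational $S^1$-equivariant cohomology is a finite product of copies of $H^*(\mathrm{pt};\mathbb{Q})$, so its Fadell--Rabinowitz index is $1$, and the standard continuity/subadditivity (deformation) argument already gives a jump $\le 1$ --- this is in effect what underlies Proposition 3.3. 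Your whole Morse-theoretic apparatus (the decomposition \eqref{E:3} and Proposition \ref{P:dim-1}) requires exactly the isolation you cannot assume, so the proposal proves nothing in the only hard case. Second, even in the isolated case your key step --- that the run of consecutive degrees $2i-2$ realizing the jump is captured by a \emph{single} orbit's window $[i(c),i(c)+\nu(c)]$ --- is asserted, not proved: with several orbits the union of such windows can contain far more than $n$ consecutive even integers, and the relative classes $\sigma_i$ with $q^*(\sigma_i)=(f^+)^*(\eta^{i-1})$ are only determined modulo the image of the connecting homomorphism, so nothing forces $\sigma_{i+1}=\eta\cdot\sigma_i$ inside one fixed summand of \eqref{E:3}; the ``subordination'' heuristic does not close this.

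The paper's proof is different and bypasses both problems by a dimension count that needs no isolation hypothesis. The jump at $\kappa$ is bounded by $\mathrm{index}_\eta$ of the critical set $Cr_h(\kappa)$ at that level (via the usual deformation across the level), and Fadell--Rabinowitz's dimension inequality gives $2\bigl(\mathrm{index}_\eta\, Cr_h(\kappa)-1\bigr)\le \dim\bigl(Cr_h(\kappa)/S^1\bigr)$. Since every closed geodesic with energy $\frac12\kappa^2$ is determined by its initial condition, $Cr_h(\kappa)$ injects into the $(2n-1)$-dimensional unit sphere bundle (a fixed energy level of the geodesic flow), so $\dim\bigl(Cr_h(\kappa)/S^1\bigr)\le 2n-2$ and hence $\mathrm{index}_\eta\, Cr_h(\kappa)\le n$, which is exactly the bound $d_h(\kappa)-d_h(\kappa-)\le n$. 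If you want to salvage your route, you would have to prove the bound for arbitrary, possibly degenerate critical sets, which is precisely what this dimension inequality accomplishes and what Gromoll--Meyer local modules cannot.
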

\begin{proof}
It is trivial for continuous points of $d_h(\kappa)$. If $\kappa>0$ is a discontinuous point of $d_h(\kappa)$, there are critical points of $E$, denoted by $Cr_h(\kappa)$, in the class of $\Lambda_h M$ with energy $\frac{1}{2} \kappa^2$. Firstly, due to the dimensional inequality given in \cite{FR78} and $\eta \in H^2(M)$ is the first Chern class, we have
\be
2(index_\eta (Cr_h(\kappa)) -1) \leq {\rm dim} (Cr_h(\kappa)/S^1).\nonumber
\ee
Therefore, if one has $d_h(\kappa)-d_h(\kappa-)>n$, then  $index_\eta (Cr_h(\kappa)) \geq n+1$, it immediately follows ${\rm dim} (Cr_h(\kappa) /S^1)  \geq 2n$.

On the other hand, $\gamma \in Cr_h(\kappa)$ are periodic orbits of geodesic flows defined on the spherical bundles of $M$ with the same periods (may not be the minimal one) and energy $\frac{1}{2} \kappa^2$. We note that any period orbit $\gamma \in Cr_h(\kappa)$  is one to one corresponding to its initial data $\gamma(0) \in  Cr_h(\kappa) /S^1$. They are all located on the energy surface $E =\frac{1}{2} \kappa^2$ whose dimension is up to $2n-2$ since the dimension of the phase space is $2n-1$. It is a contradiction.
\end{proof}

Since  $\nu(c) \leq 2n-2$,  by using Theorem \ref{T:index} and Definition 3.7 we have the following Lemma:
\begin{lemma} \label{L:mean-con}
		If $c$ is $(p(m-1)+1, i)$-variationally visible, the mean average index $\alpha(c)$ satifies
		\be \label{E:mai-c}
		\big| \alpha(c) -  \frac{2(i-1)}{\kappa_i} \big| \leq \frac{3(n-1)}{\kappa_i}
		\ee
\end{lemma}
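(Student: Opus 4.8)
\emph{Proof proposal.} This lemma is essentially a bookkeeping consequence of Theorem \ref{T:index}, so my plan is to pass to the iterate $c^{\ell}$ with $\ell=p(m-1)+1$ and combine three ingredients: the behaviour of length, energy and average index under iteration; the index sandwich that Theorem \ref{T:index} and Proposition \ref{P:dim-1} give for $c^{\ell}$; and the classical iteration estimate comparing the Morse index of a closed geodesic with its average index.

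First I would record the scalings. Since $c$ is $(p(m-1)+1,i)$-variationally visible, \eqref{E:0} applied to $c^{\ell}$ gives $E(c^{\ell})=\frac{1}{2}\kappa_i^2$; because a closed geodesic is parametrized with constant speed one has $E=\frac{1}{2}L^2$, so $L(c^{\ell})=\kappa_i$. Combining this with $L(c^{\ell})=\ell L(c)$ and $\hat{i}(c^{\ell})=\ell\,\hat{i}(c)$ gives
\[
\alpha(c)=\frac{\hat{i}(c)}{L(c)}=\frac{\hat{i}(c^{\ell})/\ell}{\kappa_i/\ell}=\frac{\hat{i}(c^{\ell})}{\kappa_i},
\]
so it is enough to prove $\bigl|\hat{i}(c^{\ell})-2(i-1)\bigr|\le 3(n-1)$ and then divide by $\kappa_i$.

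Next, Theorem \ref{T:index} together with Proposition \ref{P:dim-1} gives, for the critical orbit $S^1\cdot c^{\ell}$, the sandwich $i(c^{\ell})\le 2i-2\le i(c^{\ell})+\nu(c^{\ell})$; since $\nu(c^{\ell})\le 2n-2$ this already yields $\bigl|i(c^{\ell})-2(i-1)\bigr|\le 2(n-1)$. On the other hand, the linearized Poincar\'e map of a closed geodesic on an $n$-dimensional manifold lies in $\mathrm{Sp}(2(n-1),\R)$, so the classical iteration estimate gives $\bigl|i(c^{\ell})-\hat{i}(c^{\ell})\bigr|\le n-1$. Adding the two bounds,
\[
\bigl|\hat{i}(c^{\ell})-2(i-1)\bigr|\le\bigl|\hat{i}(c^{\ell})-i(c^{\ell})\bigr|+\bigl|i(c^{\ell})-2(i-1)\bigr|\le (n-1)+2(n-1)=3(n-1),
\]
and \eqref{E:mai-c} follows on dividing by $\kappa_i$.

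Since all the analytic content is already absorbed into Theorem \ref{T:index}, I do not anticipate a genuine obstacle; the only point that deserves attention is the constant. The value $3(n-1)$ is simply the sum of the worst-case nullity $2(n-1)$ of $c^{\ell}$ and the gap $n-1$ in the iteration estimate, with no reason for cancellation. One should also be careful that the average index appearing on the right of \eqref{E:mai-c} is the one adapted to the homotopy class $[h]$ (see the beginning of Section \ref{S:generic}), which is exactly what makes the identity $\hat{i}(c^{\ell})=\ell\,\hat{i}(c)$ legitimate for $\ell=p(m-1)+1$.
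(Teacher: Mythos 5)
Your proposal is correct and follows essentially the same route as the paper: the index sandwich $i(c^{\ell})\le 2i-2\le i(c^{\ell})+\nu(c^{\ell})$ with $\nu\le 2n-2$, the average index inequality $|i(c^{\ell})-\ell\,\hat{i}(c)|\le n-1$ from Rademacher's work, a triangle inequality giving $3(n-1)$, and division by $\kappa_i=L(c^{\ell})$. The only cosmetic difference is that you spell out the scaling identities ($E=\tfrac12 L^2$, $\hat{i}(c^{\ell})=\ell\,\hat{i}(c)$) that the paper uses implicitly.
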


\begin{proof}
	It follows from the definition of $(p(m-1)+1,i)$-variationally visible that there exists $m \geq 1$ such that $|i(c^{(m-1)p+1})-2(i-1)| \leq 2n-2$. Note the average index inequality (see \cite{Rad89}) guarantees
	\be
	| i(c^{(m-1)p+1})- ((m-1)p+1)\hat{i}(c)| \leq n-1.\nonumber
	\ee
	We have
	\be
	|((m-1)p+1)\hat{i}(c) - 2(i-1)| \leq 3n-3.\nonumber
	\ee 	
	Dividing $\kappa_i$ on both hand sides and use the fact that
	$L(c^{(m-1)p +1}) = ((m-1)p+1)L(c) = \kappa_i$, we arrive the conclusion.
\end{proof}

\begin{proof}[Proof of Proposition \ref{P:mai}]

It is sufficient to show that
\be
[\underline{\sigma},\overline{\sigma}] \subset  \overline{A:=\{ \frac{1}{2} \alpha(c_i)| i \in \BBN \}},\nonumber
\ee
where $c_i$ is $(p(m-1)+1,i)$-variationally visible for some $i \in \BBN$.

If $\underline{\sigma} = \overline{\sigma}$, due to \eqref{E:Ind-i} one has $t= \lim_{i \to \infty} \frac{i}{\kappa_i}$ and the conclusion holds since \eqref{E:mai-c} and the fact $\kappa_i \to \infty$.
	
On the other hand, suppose that  $\underline{\sigma} < \overline{\sigma}$. For any $t \in [\underline{\sigma}, \ov{\sigma}]$, we shall construct an increasing sequence $i_l, l \in \BFN$ such that
\be
\lim_{l \to \infty} \frac{i_l-1}{\kappa_{i_l}} = t.\nonumber
\ee
Indeed, if such sequence exists, due to Theorem \ref{T:index}, there always exists a closed geodesic, denoted by $c$, is $((m_l-1)p+1,i_l)$-variationally visible. Then the desired conclusion follows from \eqref{E:mai-c} since $\frac{3(n-1)}{\kappa_{i_l}} \to 0$ as $l \to \infty$.

To construct this sequence, we shall follow an induction argument. Suppose that we have constructed $i_l > i_{l-1}$ satisfying
\be
|t - \frac{(i_l-1)}{\kappa_{i_l}}| \leq \frac{1}{l}.\nonumber
\ee
The definition of $\underline{\sigma}$ implies that one always find $j > i_l$ such that
\be
\frac{(j-1)}{\kappa_{j}}< \underline{\sigma} + \frac{1}{l+1}  \text{ and } \frac{3(n-1)}{\kappa_{j}} \leq \frac{1}{l+1}.\nonumber
\ee
We shall now construct $i_{l+1} >i_l$ with
\be
|t - \frac{i_{l+1}-1}{\kappa_{i_{l+1}}} | \leq \frac{1}{l+1}\nonumber
\ee
to finish the induction.

Since $\kappa_j$ is non-decreasing with $j$, one has
\be
\frac{j+k-1}{\kappa_{j+k}} - \frac{j+k-2}{\kappa_{j+k-2}} \leq \frac{1}{\kappa_{j+k}} \leq \frac{1}{l+1}\nonumber
\ee
On the other hand, due to the definition of $\ov{\sigma}$, there are some $k_j \in \BFN$ such that
\be
\frac{j+k_j-1}{\kappa_{j+k_j}} > \ov{\sigma} - \frac{1}{l+1}\nonumber
\ee
Therefore, one has the intervals
\be
I_{k}:=[ \frac{j+k-1}{\kappa_{j+k}}- \frac{1}{l+1}, \frac{j+k-1}{\kappa_{j+k}}+ \frac{1}{l+1}]\nonumber
\ee
for $0 \leq k \leq k_j$ covers $[\underline{\sigma},\ov{\sigma}]$. Then at least one of them, denoted by $I_{\ov{k}}$, contains $t$. We can set $l_{j+1} = j + \ov{k}$ and finish the proof.\end{proof}

Combining (\ref{E:Ind-i}) with (\ref{E:mai-c}) of Lemma \ref{L:mean-con}, note that $\kappa_i \to \infty$ as $i \to \infty$, we obtain:
\begin{corollary} \label{L:Dens}
Suppose there are finitely many distinct closed geodesics of class $[h]$ on $(S^n/\Gamma,F)$. Then for any
$c \in \mathcal{V}_\infty(S^n/\Gamma,F;[h])$, we have
	\be
	\alpha(c)=2\sigma,\nonumber
	\ee
where $\sigma=\underline{\sigma}=\ov{\sigma}$ under our assumptions.
\end{corollary}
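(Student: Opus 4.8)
The plan is to feed the defining property of $\mathcal{V}_\infty(S^n/\Gamma,F;[h])$ into the quantitative estimate \eqref{E:mai-c} and then pass to the limit using \eqref{E:Ind-i}. First I would fix $c\in\mathcal{V}_\infty(S^n/\Gamma,F;[h])$ and unwind the definition of variational visibility: there exist infinitely many pairs $(m,i)\in\mathbb{N}^2$ for which $c$ is $(p(m-1)+1,i)$-variationally visible. The first observation is that the index parameter $i$ is necessarily unbounded along these pairs. Indeed, for a fixed geodesic $c$ and a fixed $i$, the energy identity in \eqref{E:0}, namely $E\big(c^{p(m-1)+1}\big)=(p(m-1)+1)^2E(c)=\tfrac12\kappa_i^2$, pins down $m$ uniquely, since $m\mapsto E\big(c^{p(m-1)+1}\big)$ is strictly increasing; hence each $i$ is matched with at most one admissible $m$, so infinitely many pairs force infinitely many distinct values of $i$. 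Extracting a strictly increasing sequence $i_l$ together with companions $m_l$ realizing the variational visibility of $c$, we get $\kappa_{i_l}\to\infty$ by Proposition \ref{P:Ind-2}.

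Next I would apply Lemma \ref{L:mean-con} to each pair $\big(p(m_l-1)+1,\,i_l\big)$, obtaining
\[
\Big|\,\alpha(c)-\frac{2(i_l-1)}{\kappa_{i_l}}\,\Big|\;\le\;\frac{3(n-1)}{\kappa_{i_l}}\;\longrightarrow\;0 .
\]
Writing $\dfrac{i_l-1}{\kappa_{i_l}}=\dfrac{i_l}{\kappa_{i_l}}-\dfrac{1}{\kappa_{i_l}}$ and using $\kappa_{i_l}\to\infty$, this reduces the claim to identifying $\lim_{l\to\infty}\dfrac{i_l}{\kappa_{i_l}}$.

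Finally, under the standing finiteness hypothesis the Corollary stated just after Proposition \ref{P:mai} gives $\underline{\sigma}=\overline{\sigma}=:\sigma$, so by \eqref{E:Ind-i} the whole sequence $\dfrac{i}{\kappa_i}$ converges to $\sigma$, hence so does the subsequence $\dfrac{i_l}{\kappa_{i_l}}$. Combining the three steps yields $\alpha(c)=2\sigma$. The only genuinely delicate point is the first step — ruling out that a single geodesic is variationally visible for infinitely many $m$ while $i$ stays bounded — which is handled by the strict monotonicity of the energy under iteration (and, if one prefers, by the fact from Theorem \ref{T:Strict-in} that the $\kappa_i$ are pairwise distinct under the finiteness assumption); everything after that is a routine limit computation.
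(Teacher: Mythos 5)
Your proposal is correct and takes essentially the same route as the paper, whose proof is simply to combine \eqref{E:Ind-i} with the estimate \eqref{E:mai-c} of Lemma \ref{L:mean-con} under the finiteness hypothesis (which forces $\underline{\sigma}=\overline{\sigma}$) together with $\kappa_i\to\infty$. Your extra step verifying that the visible indices $i$ must be unbounded (via strict monotonicity of $E(c^{p(m-1)+1})$ in $m$) just makes explicit a detail the paper leaves implicit, and is sound.
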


\begin{proposition} \label{P:Generic}
	Let $M=(S^n/\Gamma, F)$ be the $n$-dimensional compact Finsler space form  with flag curvature $K$ satisfies $ \frac{\lambda^2}{(\lambda+1)^2} \frac{4k^2}{(n-1)^2} < K \leq 1$ for some $k=1,\ldots,n-2$ and let either $n$ be even or $k \geq \frac{n-1}{2}$, where $n\geq 3$.
If there are only finitely many distinct non-contractible closed geodesics of class $[h]$, then there are $k+1$ geometrically distinct closed geodesics $c_1,\ldots,c_{k+1}$ with the same mean average index.
\end{proposition}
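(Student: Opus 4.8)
The plan is to argue by contradiction and to locate the required $k+1$ closed geodesics inside the set $\mathcal{V}_\infty:=\mathcal{V}_\infty(S^n/\Gamma,F;[h])$ of infinitely variationally visible non-contractible closed geodesics of class $[h]$. So suppose there are only finitely many distinct non-contractible closed geodesics of class $[h]$; then there are only finitely many minimal ones and $\mathcal{V}_\infty$ is a finite set. Under this hypothesis Corollary~\ref{L:Dens} is in force and gives $\underline{\sigma}=\overline{\sigma}=:\sigma$ together with $\alpha(c)=2\sigma$ for every $c\in\mathcal{V}_\infty$; in particular all members of $\mathcal{V}_\infty$ automatically share the single mean average index $2\sigma$, so the whole statement reduces to proving $\#\mathcal{V}_\infty\ge k+1$.

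The second step is a uniform lower bound for the average index forced by the pinching. Since $K$ is continuous on the compact $S^n$ and the pinching inequality is strict, fix $\delta$ with $\frac{\lambda^2}{(\lambda+1)^2}\frac{4k^2}{(n-1)^2}<\delta\le K$. For a minimal $c\in\mathcal{V}_\infty$ of class $[h]$, the iterate $c^p$ lifts to a closed geodesic on the simply connected manifold $(S^n,F)$ (a local isometric cover of $S^n/\Gamma$), so Lemma~\ref{L:Mean-ind} is available: when $n$ is even it applies for any positive lower bound of $K$, and when $n$ is odd the hypothesis $k\ge\frac{n-1}{2}$ gives $\frac{4k^2}{(n-1)^2}\ge1$, hence $\delta>\frac{\lambda^2}{(\lambda+1)^2}$, which is exactly the pinching the lemma demands in the odd case. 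In either case $\hat{i}(c^p)\ge\frac{\sqrt{\delta}(\lambda+1)}{\lambda}(n-1)>2k$, and since $\hat{i}(c^p)=p\,\hat{i}(c)$ this yields $\hat{i}(c)>\frac{2k}{p}>0$ for all $c\in\mathcal{V}_\infty$.

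Finally, the positivity just obtained lets us apply Lemma~\ref{L:ave-ind}, which gives $\sum_{c\in\mathcal{V}_\infty}\frac{1}{\hat{i}(c)}\ge\frac{p}{2}$. Each summand is \emph{strictly} smaller than $\frac{p}{2k}$, so $\frac{p}{2}<\#\mathcal{V}_\infty\cdot\frac{p}{2k}$, i.e.\ $\#\mathcal{V}_\infty>k$; as the cardinality is an integer, $\#\mathcal{V}_\infty\ge k+1$. These are $k+1$ pairwise distinct minimal non-contractible closed geodesics of class $[h]$, hence $k+1$ geometrically distinct closed geodesics, and all of them have mean average index $2\sigma$ by the first step, which is the assertion. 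The point demanding care is the second step: one must split on the parity of $n$ to guarantee that the hypotheses of Lemma~\ref{L:Mean-ind} are genuinely met — this is precisely where the assumption ``$n$ even or $k\ge\frac{n-1}{2}$'' is used — and one must keep $\hat{i}(c)>2k/p$ strict so that the count in the last step produces $k+1$ rather than merely $k$ geodesics. Beyond that, the argument is bookkeeping with the Fadell--Rabinowitz index estimates already established in Sections~\ref{S:FR} and~\ref{S:Multi}.
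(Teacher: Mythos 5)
Your argument is correct and is essentially the paper's own proof: the paper proves this proposition by exactly the combination you use, namely the argument of Theorem \ref{T:main-3} (Lemma \ref{L:Mean-ind} applied to $c^p$ on the universal cover plus Lemma \ref{L:ave-ind}, with the strict pinching giving $\hat{i}(c)>2k/p$ and hence at least $k+1$ geodesics) together with Corollary \ref{L:Dens} to conclude that all members of $\mathcal{V}_\infty(S^n/\Gamma,F;[h])$ share the mean average index $2\sigma$. Your explicit parity check ensuring Lemma \ref{L:Mean-ind} applies (using $k\geq\frac{n-1}{2}$ when $n$ is odd) is precisely the role of that hypothesis in the statement, so no new ideas beyond the paper's route are involved.
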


This proof of this proposition follows from the same argument in the proof of Theorem 1.3 and Corollary \ref{L:Dens}.
\\

Finally, we are going to study generic properties of non-contractible closed geodesic. Denote by $\mathfrak{F}$ the set of $C^r$-Finsler metrics with $r\geq 4$ on compact manifold $M$. We define the following subset
\be \label{E:Generic-metric}
\CF : =\{ F \in \mathfrak{F}\;| c_1, c_2 \text{ are distinct closed geodesics } \Rightarrow \alpha_F(c_1) \neq  \alpha_F(c_2) \},
\ee
in which the subscript $F$ is addressed on the dependence of the mean average index on metrics.
Our aim is to show that $\CF$ is a residual set, i.e., countable intersections of open, dense sets, in $\mathfrak{F}$ in strong $C^4$ topology.

Given a Finsler metric $F:TM \to \BFR$ on a compact smooth manifold $M$,  one can defines a family of Riemannian metrics $g^y$ on
the tangent space $T_x M$ parameterized by unit vectors $y \in S_x M:=\{w\in T_x M ; F(w) = 1\}$. As the local chart
$U \subset TM$ is represented as  $(x_1,\ldots,x_n,y_1,\ldots,y_n)$, these metrics are given by
\be \label{E:Osc-R}
g(x,y) = g^y(x) =  \frac{1}{2} \frac{\p F^2}{\p y_i \p y_j}(x,y) dy_i dy_j, \;\; y \in T_x M.
\ee
For any closed geodesics $c$ parameterized
by arc length, we can extend the unit vector field $\dot{c}$ along $c$ to a non-zero vector field $V$ in the tubular neighborhood of $c$ and we actually obtain a Riemannian metric due to the positively definiteness  of $F^2$. Then the Riemannian metric $g^V$
is called {\it osculating Riemannian metric}. As we consider the normal bundle coordinates $(u_0,\ldots,u_{n-1}) = (u_0(t),u(t))$ in a tubular neighbourhood of the closed geodesic with respect to the osculating Riemannian metrics $g^V$. The closed geodesic is represented by $u_0(t)=t, u(t)=0$ and for any $t_0$ and $v\in \BFR^{n-1}$ the curve $u_0(t)=t_0, u(t)=tv$ is also a geodesic. Then for the metric $g^V$ we obtain
\be
g_{ij}^V(t,0)=\delta_{ij}, 0 \leq i,j \leq n-1, \; \p_{u_k} g_{ij}^V(t,0) = 0.\nonumber
\ee
If $c$ is a closed geodesic of $(M,F)$, it is also a closed geodesic on $(M,g^V)$ with the same length. Let $\mathcal{P} \in T_xM$ be a plane with $V(x) \in \mathcal{P}$, then the flag curvature $K(V(x);\mathcal{P})$ equals the sectional curvature $K(\mathcal{P})$ of $g^V$. Parallel transport
along $c$ as well as the index and nullity of the closed geodesic of the
Finsler metrics agree with the parallel transport and the index and
nullity of the osculating Riemannian metric. In a similar way given in section 3 of \cite{RT20} to extend the bumpy metric theorem to osculating Riemannian metrics, $\mathcal{F}$ is a residual set in strong $C^4$ topology due to a local perturbative argument from \cite{KT72}, which was used
by Rademacher to prove a similar conclusion with Riemannian metrics in $C^2$ topology, cf. \cite{Rad94}.

\begin{proof}[Proof of Theorem \ref{T:main-4}]
	Suppose that there are only finitely many distinct non-contractible closed geodesics of class $[h]$ on $(S^n/\Gamma,F)$ for $F\in \mathcal{F}$. Due to Proposition \ref{P:Generic},  we obtain there are at least two non-contractible closed geodesics in $\Lambda_h M$ if the flag curvature condition is satisfied. They share the same mean average index. On the other hand,  due to the definition of the Finsler metrics $\mathcal{F}$ given in \eqref{E:Generic-metric}, for $C^4$-generic metrics, there are only one closed geodesic, which is a contradiction.
\end{proof}


\begin{thebibliography}{99}
\bibitem{Ban93} V. Bangert,  On the existence of closed geodesics on two-spheres.
	{\it Internat. J. Math.} 4 (1993), no. 1, 1--10.
\bibitem{BH84} V. Bangert, N. Hingston, Closed geodesics on manifolds with infinite abelian fundamental group. {\it J.
		Differ. Geom.} 19 (1984), 277-282.
\bibitem{BL10} V. Bangert, Y. Long, The existence of two closed geodesics on every Finsler 2-sphere,
	{\it Math. Ann.} 346 (2010), 335-366.
\bibitem{BTZ81} W. Ballmann, G. Thorbergsson and W. Ziller, Closed geodesics and the fundamental group. {\it Duke Math. J.}
	48 (1981), 585-588.
\bibitem{BTZ82} W. Ballmann, G. Thorbergsson and W. Ziller, Existence of closed geodesics on positively curved manifolds.
	{\it J. Differ. Geom.} 18 (1983), no. 2, 221-252
\bibitem{Bir27} G. Birkhoff, Dynamical Systems. Colloquium publications of American Mathematical Society. (1927)
\bibitem{Chan93} K. C. Chang, Infinite Dimensional Morse Theory and Multiple Solution Problems. Birkh\"auser. Boston. 1993.
\bibitem{DuL1} H. Duan, Y. Long, Multiple closed geodesics on bumpy Finsler $n$-spheres, {\it J. Differ. Equa.} 233 (2007), no. 1, 221-240.
\bibitem{DuL2} H. Duan, Y. Long, The index growth and multiplicity of closed geodesics. {\it J. Funct.
		Anal.} 259 (2010) 1850-1913.
\bibitem{DLW1} H. Duan, Y. Long and W. Wang, Two closed geodesics on compact simply-connected bumpy
	Finsler manifolds. {\it J. Differ. Geom.} 104 (2016), no. 2, 275-289.
\bibitem{DLW2} H. Duan, Y. Long and W. Wang, The enhanced common index jump theorem for symplectic paths and
	non-hyperbolic closed geodesics on Finsler manifolds. {\it Calc. Var. and PDEs.} 55 (2016), no. 6, 55:145.
\bibitem{DLX15} H. Duan, Y. Long and Y. Xiao, Two closed geodesics on $\mathbb{R}P^n$ with a bumpy Finsler metric,
	{\it Calc. Var. and PDEs}, (2015), vol 54,  2883-2894.
\bibitem{Eke90} I. Ekeland, Convexity Methods in Hamiltonian Mechanics. Springer-Verlag. Berlin. 1990.
\bibitem{EH87} I. Ekeland, H. Hofer,  Convex Hamiltonian energy surfaces and their closed trajectories,
	{\it Comm. Math. Phys.} 113 (1987), 419-467.
	convex energy surface. {\it Ann. of Math.} 112 (1980), 283-319.
\bibitem{FR78} E. Fadell, P. Rabinowitz, Generalized cohomological index
	theories for Lie group actions with an application to bifurcation questions for
	Hamiltonian systems. {\it Invent. Math.} 45 (1978), no. 2, 139--174.
\bibitem{Fra92} J. Franks, Geodesics on $S\sp 2$ and periodic points of annulus homeomorphisms. {\it Invent. Math.} 108 (1992), no. 2, 403-418.
\bibitem{GM1969Top} D. Gromoll and W. Meyer, On differentiable functions with isolated critical points. {\it Topology}
	8 (1969), 361-369.
\bibitem{GM69} D. Gromoll, W. Meyer, Periodic geodesics on compact Riemannian manifolds, {\it J. Differ. Geom.}
	3 (1969), 493-510.
\bibitem{Hin93} N. Hingston,  On the growth of the number of closed geodesics on the two-sphere. {\it Inter. Math. Research Notices.} 9 (1993), 253-262.
\bibitem{HiR} N. Hingston, H.-B. Rademacher, Resonance for loop homology of spheres. {\it J. Differ. Geom.}
	93 (2013), 133-174.
\bibitem{Kat73}  A. B. Katok, Ergodic properties of degenerate integrable Hamiltonian systems. {\it Izv. Akad. Nauk.
		SSSR} 37 (1973), [Russian]; {\it Math. USSR-Izv.} 7 (1973), 535-571.
\bibitem{KT72} W. Klingenberg, F. Takens, Generic properties of geodesic flows. {\it Math. Ann.} 197 (1972), 323-334.
\bibitem{Kli78}  W. Klingenberg, Lectures on closed geodesics. Springer-Verlag, Berlin, heidelberg, New York, 1978.
\bibitem{Liu17} H. Liu, The Fadell-Rabinowitz index and multiplicity of non-contractible closed geodesics on Finsler $\mathbb{R}P^{n}$.
	{\it J. Differ. Equa.} 262 (2017), 2540-2553.
\bibitem{LLX18} H. Liu, Y. Long and Y. Xiao, The existence of two non-contractible closed geodesics on
	every bumpy Finsler compact space form. {\it Discrete Contin. Dyn. Syst.} 38 (2018), 3803-3829.
\bibitem{LX} H. Liu and Y. Xiao, Resonance identity and multiplicity of non-contractible closed geodesics on Finsler $\mathbb{R}P^{n}$.
	{\it  Adv. Math.} 318 (2017), 158-190.
\bibitem{Liu19} H. Liu, The optimal lower bound estimation of the number of closed geodesics on Finsler compact space form ${S}^{2n+1}/ \Gamma$. {\it Calc. Var. and PDEs}, 58 (2019), 107.
\bibitem{lo2002} Y. Long,  Index Theory for Symplectic Paths with Applications. Progress in Math. 207, Birkh\"auser. 2002.
\bibitem{LoD} Y. Long, H. Duan,  Multiple closed geodesics on 3-spheres.
	{\it Adv. Math.} 221 (2009) 1757-1803.
\bibitem{LZ02} Y. Long, C. Zhu,  Closed characteristics on compact convex hypersurfaces in $\R^{2n}$.  {\it Ann. of Math.}
	155 (2002) 317-368.
\bibitem{LF51} L. A. Lyusternik and A. I. Fet, Variational problems on
	closed manifolds. {\it Dokl. Akad. Nauk SSSR (N.S.)} 81 (1951) 17-18
	(in Russian).
\bibitem{Rad89} H.-B. Rademacher, On the average indices of closed geodesics, {\it J. Diff. Geom.} 29 (1989), 65-83.
\bibitem{Rad92} H.-B. Rademacher, Morse Theorie und geschlossene Geodatische. {\it Bonner Math. Schr.} 229 (1992).
\bibitem{Rad94} H.-B. Rademacher, The Fadell-Rabinowitz index and
	closed geodesics. {\it J. London. Math. Soc.} 50 (1994) 609-624.
\bibitem{Rad04} H.-B. Rademacher, A Sphere Theorem for non-reversible
	Finsler metrics. {\it Math. Ann.} 328 (2004) 373-387.
\bibitem{Rad07} H.-B. Rademacher, Existence of closed geodesics on
	positively curved Finsler manifolds. {\it Ergod. Th. Dyn. Sys. } 27 (2007), no. 3, 957--969.
\bibitem{Rad10} H.-B. Rademacher, The second closed geodesic on Finsler
	spheres of dimension $n>2$. {\it Trans. Amer. Math. Soc. } 362 (2010), no. 3, 1413-1421.
\bibitem{RT20} H.-B. Rademacher, I.A. Taimanov, The second closed geodesic, the fundamental group, and generic Finsler metrics. {\it arXiv:} 2011.01909v2, 2000.
\bibitem{Shen01} Z. Shen, Lectures on Finsler Geometry. World Scientific. Singapore. 2001.
\bibitem{Tai1} I.A. Taimanov, The type numbers of closed geodesics. {\it Regul. Chaotic Dyn.} 15 (2010), no. 1, 84-100.
\bibitem{Tai16} I.A. Taimanov, The spaces of non-contractible closed curves in compact space forms, {\it Mat. Sb.} 207(10) (2016), 105-118.
\bibitem{ViS} M. Vigu$\acute{e}$-Poirrier, D. Sullivan, The homology theory of the closed geodesic problem. {\it J. Differ. Geom.}
	11 (1976), 633-644.
\bibitem{Wan1} W. Wang, Closed geodesics on positively curved Finsler spheres.
	{\it Adv. Math.} 218 (2008), 1566-1603.
\bibitem{Wan2} W. Wang,  On a conjecture of Anosov, {\it Adv. Math.} 230 (2012), 1597-1617.
\bibitem{Wan13} W. Wang, On the average indices of closed geodesics on positively curved Finsler spheres.
	{\it Math. Ann.} 355 (2013), 1049-1065.
\bibitem{XL15} Y. Xiao and Y. Long, Topological structure of non-contractible loop space
	and closed geodesics on real projective spaces with odd dimensions. {\it Adv. Math.} 279 (2015), 159-200.
\bibitem{Zil83} W. Ziller,  Geometry of the Katok examples, {\it Ergod. Th. Dyn. Sys.} 3 (1983), 135-157.
\end{thebibliography}


\end{document}